\newtheorem{theorem}{Theorem}[section]
\newtheorem{prop}[theorem]{Proposition}
\newtheorem{definition}[theorem]{Definition}
\newtheorem{corollary}{Corollary}
\newtheorem{lemma}[theorem]{Lemma}
\newtheorem{remark}[theorem]{Remark}
\newcommand{\N}{\mathbb{N}}
\newcommand{\Z}{\mathbb{Z}}
\newcommand{\G}{\mathbb{G}}
\newcommand{\CD}{\mathit{CD}}
\newcommand{\CDP}{\mathit{CDP}}
\newcommand{\Id}{\mathrm{Id}}
\title{Obstruction Complexes in Grid Homology}
\author{Yan Tao}
\date{}
\begin{document}


\begin{abstract}
Recently, Manolescu-Sarkar constructed a stable homotopy type for link Floer homology, which uses grid homology and accounts for all domains that do not pass through a specific square. In doing so, they produced an obstruction chain complex of the grid diagram with that square removed. We define the obstruction chain complex of the full grid, without the square removed, and compute its homology. Though this homology is too complicated to immediately extend the Manolescu-Sarkar construction, we give results about the existence of sign assignments in grid homology.
\end{abstract}

\maketitle

\section{Introduction}

Link Floer homology, developed by \cite{OS3}, \cite{RA}, and \cite{OS4}, is an invariant of oriented links in three-manifolds which comes from Heegaard Floer homology, from \cite{OS1} and \cite{OS2}. \cite{MOS}, \cite{MOST}, and \cite{OSS} gave a combinatorial description of the link Floer chain complex for a link in $S^3$ using grid diagrams, known as grid homology. A toroidal grid diagram is a $n \times n$ grid of squares, with the left and right edges identified and the top and bottom edges identified, together with markings $X$ and $O$, such that each row and column contains exactly one $X$ and one $O$. Given a grid diagram $\G$, drawing vertical segments from the $X$ to the $O$ in each column and horizontal segments---going under the vertical segments whenever they cross---from the $O$ to the $X$ in each row gives the diagram of an oriented link $L$; we say that $\G$ is a grid diagram for $L$. Figure $\ref{fig1}$ shows a $5 \times 5$ grid diagram for the trefoil. The grid chain complex is generated by unordered $n$-tuples of intersection points between the horizontal and vertical circles---Figure $\ref{fig1}$ shows an example of such a generator.

\begin{figure}\begin{tikzpicture}

\draw (0, 0) grid (5, 5);
\draw (0.5, 4.5) -- (3.5, 4.5);
\draw (0.5, 1.5) -- (0.5, 4.5);
\draw (1.5, 0.5) -- (1.5, 2.5);
\draw (1.5, 0.5) -- (4.5, 0.5);
\draw (4.5, 0.5) -- (4.5, 3.5);
\draw (3.5, 2.5) -- (3.5, 4.5);
\draw (2.5, 1.5) -- (2.5, 3.5);
\draw (0.5, 1.5) -- (1.4, 1.5);
\draw (2.5, 1.5) -- (1.6, 1.5);
\draw (1.5, 2.5) -- (2.4, 2.5);
\draw (3.5, 2.5) -- (2.6, 2.5);
\draw (2.5, 3.5) -- (3.4, 3.5);
\draw (4.5, 3.5) -- (3.6, 3.5);

\foreach \i\j in {0.5/1.5, 1.5/2.5, 2.5/3.5, 3.5/4.5, 4.5/0.5}{

\draw (\i - 0.1, \j + 0.1) -- (\i + 0.1, \j - 0.1);
\draw (\i - 0.1, \j - 0.1) -- (\i + 0.1, \j + 0.1);

}

\foreach \i\j in {0.5/4.5, 1.5/0.5, 2.5/1.5, 3.5/2.5, 4.5/3.5}{

\draw (\i, \j) circle (0.1);

}

\foreach \i\j in {0/4, 1/0, 2/1, 3/3, 4/2}{

\filldraw[black] (\i, \j) circle (0.1);

}

\end{tikzpicture}\caption{A $5 \times 5$ grid diagram for the trefoil, along with the generator $[51243]$ drawn with \textbullet. Note that the generator is independent of the $X$ and $O$ markings.}\label{fig1}\end{figure}
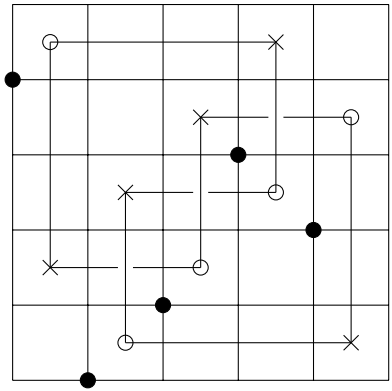

Grid diagrams have been useful in a variety of applications in Heegaard Floer homology. \cite{MOT} and \cite{MO} obtain the Heegaard Floer invariants of 3- and 4-manifolds using grid diagrams, which give algorithmically computable descriptions. \cite{Sar} uses grid homology to give another proof of Milnor's conjecture on the slice genus of torus knots. \cite{OST}, \cite{NOT}, \cite{CN}, and \cite{KN} use a version of grid homology to prove results about Legendrian knots.

\cite{MS} constructed a stable homotopy refinement of knot Floer homology from the grid chain complex, using framed flow categories in the sense of \cite{CJS}. The Manolescu-Sarkar construction uses only those domains that do not pass through a particular square on the grid, and uses obstruction theory. Their obstruction chain complexes $\CD_*$ and $\CDP_*$, which we will henceforth denote $\widehat{\CD}_*$ and $\widehat{\CDP}_*$, respectively, have simple enough homology to construct a stable homotopy type. We will extend them to complexes $\CD_*$ and $\CDP_*$ which contain all domains in the grid, first with $\Z/2$ coefficients, which can be extended to $\Z$ coefficients using obstruction theory. We take the first step towards extending the Manolescu-Sarkar construction, by computing the homology of $\CD_*$ and partially computing the homology of $\CDP_*$.

To state our main results, we fix the following convention throughout the paper. For a ring $R$, $R^{2^n}$ will denote the chain complex given by $R^{\binom{n}{k}}$ in grading $k$ with no differentials, and $R[U]$ the chain complex given by $R$ in every nonnegative even grading and $0$ in every odd grading (which by definition has no differentials). We begin by showing that:

\begin{prop}$H_*(\CD_*; \Z/2)$ is isomorphic to $\Z/2[U]$.\label{prop1}\end{prop}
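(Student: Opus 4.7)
The plan is to relate $\CD_*$ to Manolescu-Sarkar's complex $\widehat{\CD}_*$, whose homology over $\Z/2$ is computed in \cite{MS}, by filtering $\CD_*$ according to how chains interact with the distinguished square $s$ that is removed in the hatted construction. Concretely, I would let $F_p\CD_*$ be the subcomplex generated by chains whose constituent domains all have multiplicity at most $p$ at $s$. Since the differential of $\CD_*$ decomposes a domain into rectangles and so cannot increase the multiplicity at any square, this is a filtration by subcomplexes.

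The next step is to identify the associated graded $F_p/F_{p-1}$ with a copy of $\widehat{\CD}_*$ shifted in Maslov grading by $2p$. A chain of $\CD_*$ whose total multiplicity at $s$ is exactly $p$ corresponds to a chain in $\widehat{\CD}_*$ together with the integer $p$, so one obtains a natural bijection on generators; the grading shift of $2p$ reflects the fact that each unit of multiplicity at an $O$-marked square contributes two to the grading in grid homology. I would verify that this bijection is a chain isomorphism by checking that stripping off multiplicity at $s$ commutes with the count of rectangle decompositions defining the differential of $\CD_*$.

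Running the spectral sequence of this filtration, the $E^1$-page becomes $\bigoplus_{p \ge 0} H_*(\widehat{\CD}_*; \Z/2)$, with the $p$-th summand shifted by $2p$. Using the Manolescu-Sarkar computation, which gives $H_*(\widehat{\CD}_*; \Z/2)$ as a single copy of $\Z/2$ in a fixed even grading, the $E^1$-page is concentrated in nonnegative even gradings with one copy of $\Z/2$ in each. Every higher differential $d^r$ shifts the total grading by $-1$, mapping even-graded pieces to odd-graded pieces, and so vanishes for parity reasons. The spectral sequence collapses at $E^1$, giving $H_*(\CD_*; \Z/2) \cong \Z/2[U]$ additively, which is precisely the complex defined in the introduction.

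The main obstacle is verifying that the associated graded $F_p/F_{p-1}$ really is a grading-shifted copy of $\widehat{\CD}_*$. One has to check carefully that the combinatorial differential, which enumerates decompositions into rectangles, commutes with the operation of stripping off multiplicity at $s$; a potentially delicate case is a rectangle that lies partly over $s$, where the stripping could alter its combinatorial type and introduce correction terms on the associated graded. A natural simplification, if difficulties arise, is to choose $s$ to be an $O$-marked square, so that the $U$-action interpretation and hence the filtration layers decouple cleanly from the rectangle combinatorics, after which the parity argument for collapse goes through verbatim.
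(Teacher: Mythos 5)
Your overall strategy --- filter, reduce to Manolescu--Sarkar's acyclicity computation for $\widehat{\CD}_*$, and then degenerate by parity --- is the right shape, and the parity argument at the end is exactly what the paper uses. But the central step fails: the associated graded of your filtration by the multiplicity $n_s(D)$ at a single square $s$ is not a shifted copy of $\widehat{\CD}_*$. The operation ``strip off multiplicity $p$ at $s$'' is not a valid operation on domains: a single square region is not a periodic domain, so subtracting $p\cdot s$ from $D$ destroys the constraint $\partial D \cap \alpha = y - x$, and the result is not a domain at all. Consequently there is no natural bijection between positive domains with $n_s(D) = p$ and positive domains with $n_s(D) = 0$, and the claimed chain isomorphism of graded pieces already fails on generators. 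The asserted grading shift by $2p$ is also not correct: having $n_s(D) = p$ does not force $D$ to contain $p$ copies of an index-$2$ annulus, since a Maslov index $1$ rectangle can pass through $s$ with multiplicity $1$, so $\mu(D)$ is not $\mu(D') + 2p$ for some $D'$ avoiding $s$.

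What one can legitimately strip off are copies of a vertical or horizontal \emph{annulus} through $s$, since annuli are periodic domains. But $D$ contains $M$ copies of the vertical annulus through $s$ exactly when \emph{all $n$} coefficients of $D$ in the column of $s$ are $\geq M$, not merely when $n_s(D) \geq M$. So the filtration must record the full vector of coefficients in that column, which is what the paper does: it filters by $A(D)\in\N^n$ (coefficients in the rightmost column), strips off $M(D) = \min A(D)$ copies of $V_{(n)}$, then filters the associated graded a second time by the top row coefficients $B(D) \in \N^{n-1}$ and a third time by the end generator $y$ in the Bruhat-type partial order, and only then invokes the acyclicity analysis of \cite[Proposition 3.4]{MS} to see that each piece $\CD_*^{a,b,y}$ is acyclic except when $a = (l,\dots,l)$, $b = 0$, $y = x^{\Id}$, where it is a single generator $lV_{(n)}$ in grading $2l$. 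After that reduction your parity degeneration argument does apply and finishes the proof, but it needs this multi-index filtration rather than multiplicity at a single square.
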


In order to frame the moduli spaces in the Manolescu-Sarkar construction, we will need a sign assignment for the grid diagram. A sign assignment is a particular way of orienting the index $1$ domains in Heegaard Floer homology; equivalently, it is a particular assignment of $0$ or $1$ to each rectangle in the grid. The existence and uniqueness (up to gauge equivalence) of sign assignments for toroidal grid diagrams was constructed by \cite{MOST}; see also \cite{GA} for an explicit construction. In the course of our later computations, we will provide a different proof of this fact via obstruction theory:

\begin{theorem}Sign assignments for $\CD_*$ exist and are unique up to gauge equivalence (equivalently, up to $1$-coboundaries of $\CD_*$ with $\Z/2$ coefficients).\label{prop11}\end{theorem}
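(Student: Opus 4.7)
The plan is to reinterpret sign assignments as $1$-cochains on $\CD_*$ with $\Z/2$ coefficients satisfying a prescribed coboundary equation, and then read off existence and uniqueness from Proposition~\ref{prop1} via the universal coefficient theorem.

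Since the degree-$1$ generators of $\CD_*$ are the rectangles of the grid, a candidate sign assignment is nothing more than a $1$-cochain $S \in C^1(\CD_*;\Z/2)$. The compatibility condition in the definition of a sign assignment---that for every composite index-$2$ domain, the two ordered factorizations into rectangles yield opposite signs---is exactly the single equation $\delta S = c$, where $c \in C^2(\CD_*;\Z/2)$ is a specific $2$-cochain determined by the combinatorics of how index-$2$ domains factor. My first step is to verify that $c$ is itself a $2$-cocycle; this should follow from $\partial^2 = 0$ in $\CD_*$ applied to index-$3$ generators and is essentially a bookkeeping exercise using the boundary on $\CD_*$ from earlier in the paper.

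With this setup the problem splits cleanly. Proposition~\ref{prop1} together with universal coefficients gives $H^*(\CD_*;\Z/2) \cong \Z/2[U]$, so $H^1 = 0$ and $H^2 \cong \Z/2$. Uniqueness up to gauge is then immediate: if $S$ and $S'$ are two sign assignments then $\delta(S-S') = 0$, so $S - S' \in Z^1 = B^1$, i.e., they differ by a $1$-coboundary, which is precisely gauge equivalence. Existence amounts to showing $[c] = 0$ in the one-dimensional group $H^2(\CD_*;\Z/2)$.

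The main obstacle is this existence step, since $H^2$ is nontrivial and the theorem requires pinning down which class $c$ represents. The cleanest route is to leverage the known sign assignment on $\widehat{\CD}_*$ from \cite{MOST} (see also \cite{GA}): pull $c$ back to $\widehat{\CD}_*$, where it is the coboundary of that known assignment, and then extend the assignment across the cells of $\CD_*$ not present in $\widehat{\CD}_*$ using the long exact sequence of the pair. Each extension either succeeds freely or is obstructed by a class in $H^2(\CD_*, \widehat{\CD}_*; \Z/2)$, and showing this obstruction vanishes reduces to a finite local check on rectangles through the removed square. Alternatively, if a compact generator of $H_2(\CD_*;\Z/2)$ can be exhibited on a small region of the grid, one can evaluate $\langle c, \cdot\rangle$ directly and see it is zero; either way the nontriviality of $H^2$ is the reason the argument requires more than the dimension count of Step~3.
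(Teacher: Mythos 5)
Your framing is exactly the one the paper uses (cf.~Lemmas~\ref{lem2} and~\ref{lem3}): a sign assignment is a $1$-cochain $s$ with $\delta s = T$ for a specific $2$-cocycle $T$, and your uniqueness argument---two solutions differ by a $1$-cocycle, hence by a coboundary since $H^1(\CD_*;\Z/2) = 0$ by Proposition~\ref{prop1}---matches the paper's proof of Theorem~\ref{prop11}. The gap is existence, and neither of your two suggested routes closes it.

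Your Route~1 does not work as described. The restriction map $H^2(\CD_*;\Z/2) \to H^2(\widehat{\CD}_*;\Z/2)$ is the zero map: the generator of $H^2(\CD_*;\Z/2)$ is detected by the cocycle $r$ of Lemma~\ref{lem8}, which is supported on the rightmost vertical annulus, a domain that passes through the removed square and hence lies outside $\widehat{\CD}_*$. By the long exact sequence of the pair, $H^2(\CD_*,\widehat{\CD}_*;\Z/2) \cong \Z/2$, so the obstruction to extending the \cite{MOST} sign assignment lives in a genuinely nontrivial group; the vanishing of $[T|_{\widehat{\CD}_*}]$ gives no information about whether that obstruction class is zero, and there is no ``finite local check'' that resolves it---you would still have to compute a pairing with a relative $2$-cycle, which is no easier than evaluating $[T]$ directly.

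Route~2 is the right idea in spirit, but your hoped-for ``compact generator of $H_2(\CD_*;\Z/2)$ on a small region of the grid'' does not exist. The generator $U$ of Proposition~\ref{prop2} (see Figure~\ref{fig2}) is a sum of $2n$ annuli and $3(n-2) + 2\binom{n-2}{2}$ hexagons spread across the entire grid, and verifying $\partial U = 0$ (Lemma~\ref{lem7}) and $[U]\neq 0$ (Lemma~\ref{lem8}) occupies a substantial portion of Section~2. Once $U$ is in hand the evaluation $T(U) \equiv n + 3(n-2) + 2\binom{n-2}{2} \equiv 0 \pmod 2$ in the proof of Lemma~\ref{lem2} is a one-liner, but that evaluation---and the construction of the explicit generator it relies on---is precisely the content of the theorem that your proposal leaves unfilled.
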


Given a sign assignment for $\CD_*$, we obtain a definition of $\CD_*$ in $\Z$ coefficients. Perhaps unsurprisingly, we then obtain the following analogue of Proposition $\ref{prop1}$:

\begin{prop}$H_*(\CD_*; \Z)$ is isomorphic to $\Z[U]$.\label{prop3}\end{prop}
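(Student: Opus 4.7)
The plan is to adapt the proof of Proposition~\ref{prop1} to integer coefficients. Theorem~\ref{prop11} supplies a sign assignment $s$, unique up to gauge equivalence, which promotes $\CD_*$ to a chain complex $\CD_*^s$ over $\Z$ whose mod-$2$ reduction recovers the $\Z/2$ complex of Proposition~\ref{prop1}. By the uniqueness, the resulting integer complex is well-defined up to isomorphism, so it suffices to work with any fixed choice of $s$.

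Whatever mechanism is used in Proposition~\ref{prop1}---a filtration, an explicit acyclic subcomplex, or a Morse-style chain contraction---I would revisit each step and check that it lifts once signs are tracked. Every time the $\Z/2$ proof relies on two contributions cancelling mod $2$, the signed version must produce genuinely opposite signs over $\Z$; this is exactly the content one expects from the sign-assignment axioms underlying Theorem~\ref{prop11}, and any residual ambiguity is absorbed by the fact that a gauge change of $s$ yields an isomorphic chain complex. If successful, this produces an explicit chain equivalence $\CD_*^s \simeq \Z[U]$ that refines the mod-$2$ equivalence, and the result follows.

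The main obstacle I anticipate is controlling signs throughout the chain-level reduction, since the $\Z/2$ argument can freely conflate contributions that must be carefully distinguished over $\Z$. As a fallback, I would deduce Proposition~\ref{prop3} from Proposition~\ref{prop1} via the universal coefficient short exact sequence
\[
0 \to H_n(\CD_*; \Z) \otimes \Z/2 \to H_n(\CD_*; \Z/2) \to \mathrm{Tor}\bigl(H_{n-1}(\CD_*; \Z), \Z/2\bigr) \to 0.
\]
Once one establishes that $H_*(\CD_*; \Z)$ is $2$-torsion-free and finitely generated in each grading (for example by running the Proposition~\ref{prop1} contraction over $\Q$ and comparing ranks, or by exhibiting explicit integral cycles whose mod-$2$ reductions generate each $H_n(\CD_*; \Z/2)$), the short exact sequence forces $H_n(\CD_*; \Z) \otimes \Z/2 = \Z/2$ in every nonnegative even grading and $0$ otherwise, whence $H_n(\CD_*; \Z) \cong \Z$ or $0$ accordingly, which is precisely $\Z[U]$.
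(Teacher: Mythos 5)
Your primary route---promote $\CD_*$ to a $\Z$-complex via a sign assignment from Theorem~\ref{prop11}, then re-run the filtration argument of Proposition~\ref{prop1} while tracking signs---is exactly what the paper does. The paper's proof is even terser than yours: it simply observes that the $\Z/2$ argument for Proposition~\ref{prop1} is itself an adaptation of \cite[Proposition~3.4]{MS}, and that argument in \cite{MS} is already carried out over $\Z$, so the same adaptation goes through verbatim with signs. Your instinct that gauge equivalence makes the integer complex well defined up to isomorphism is also correct and is the role Theorem~\ref{prop11} plays here.

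Your fallback via the universal coefficient theorem has a genuine gap. The short exact sequence with $\Z/2$ coefficients can only constrain the $2$-torsion in $H_*(\CD_*;\Z)$. Even after you show that $H_*(\CD_*;\Z)$ is finitely generated and $2$-torsion-free and that the $\Q$-homology matches the expected ranks, there could still be $p$-torsion for odd primes $p$, which neither $H_*(\CD_*;\Z/2)$ nor $H_*(\CD_*;\Q)$ detects. To close the UCT argument you would need to rule out odd torsion, for example by computing $H_*(\CD_*;\Z/p)$ for all primes $p$, at which point you have essentially re-derived the filtration argument over $\Z$ anyway. So the fallback as stated does not prove the proposition; the primary approach is the one that works.
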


In the end, our eventual goal is to extend the Manolescu-Sarkar construction over the full grid. Since the moduli spaces presented in \cite{MS} exhibit some bubbling, we will compute the first few homology groups of $\CDP_*$. Unfortunately, $\CDP_*$ has too much homology to immediately construct a stable homotopy type. So instead, we will work towards constructing a framed $1$-flow category, which is a formulation by \cite{LOS} that still contains all the information needed to define invariants such as the second Steenrod square. This requires only a sign assignment and a frame assignment, whose obstructions lie in the following lower homologies.

\begin{theorem}We have that
\begin{enumerate}

\item[(0)] $H_0(\CDP_*; \Z/2)$ is isomorphic to $\Z/2$.

\item[(1)] $H_1(\CDP_*; \Z/2)$ is isomorphic to $(\Z/2)^{n}$.

\item[(2)] $H_2(\CDP_*; \Z/2)$ is isomorphic to $(\Z/2)^{\binom{n}{2} + 1}$.

\item[(3)] $H_3(\CDP_*; \Z/2)$ is isomorphic to $(\Z/2)^{\binom{n}{3} + n}$.

\end{enumerate}\label{prop8}\end{theorem}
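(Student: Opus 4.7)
The most striking feature of the statement is that the claimed ranks are $\binom{n}{k}+\binom{n}{k-2}$ (taking $\binom{n}{j}=0$ for $j<0$) in degrees $k=0,1,2,3$. In the paper's conventions these are exactly the low-degree ranks of $(\Z/2)^{2^n}\otimes \Z/2[U]$. Combined with Proposition \ref{prop1}, which identifies $H_*(\CD_*;\Z/2)$ with $\Z/2[U]$, this strongly suggests that, at least on homology, $\CDP_*$ splits into $2^n$ pieces indexed by subsets of $\{1,\ldots,n\}$, each contributing a shifted copy of $H_*(\CD_*;\Z/2)$. My plan is to make such a decomposition precise and then read off the answer in degrees $0$ through $3$.

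First, I would introduce a filtration on $\CDP_*$ by partitioning its generators according to a subset of $\{1,\ldots,n\}$. The natural candidate, given the $n$-fold symmetry of the grid, is to filter by the number of columns (or rows) at which a generator of $\CDP_*$ exhibits the additional bubbling or annular behavior that distinguishes $\CDP_*$ from $\CD_*$. If chosen correctly, the associated graded should be a direct sum of $\binom{n}{k}$ copies of $\CD_*$ in filtration level $k$, so applying Proposition \ref{prop1} summand by summand gives an $E^1$ page isomorphic to $(\Z/2)^{2^n}\otimes \Z/2[U]$.

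Second, I would argue that the higher differentials vanish in the range relevant to the theorem. Only filtration levels up to $3$ can contribute to $H_k$ for $k\le 3$, so showing degeneration in this range is a finite check on a short list of generators. Matching ranks on the $E^\infty$ page with the claimed values $\binom{n}{k}+\binom{n}{k-2}$ and using that we work over a field then completes the proof (no extension problems over $\Z/2$).

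The main obstacle is identifying the correct filtration: obstruction complexes typically admit several natural filtrations, and the wrong choice produces nonzero higher differentials that complicate the computation even in low degrees. If a clean spectral sequence argument eludes me, the backup plan is direct: $H_0$ is generated by the basepoint class coming from Proposition \ref{prop1}; $H_1$ is spanned by $n$ classes corresponding to the columns of the grid; and $H_2, H_3$ are then computed by enumerating cycles modulo boundaries according to the type of domain (empty rectangle, annulus, union of two rectangles, etc.), with Proposition \ref{prop1} providing an anchor for the $\Z/2[U]$-summand in each degree.
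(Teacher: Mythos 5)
Your overall strategy matches the paper's: Proposition~\ref{prop4} in the paper is precisely the filtration step you describe, and it produces an associated graded with homology $(\Z/2)^{2^n}\otimes\Z/2[U]$, giving the upper bound $\operatorname{rank} H_k \le \sum_l \binom{n}{k-2l}$. But the step you defer as ``a finite check'' --- showing the $E^1$ classes actually survive to $E^\infty$ --- is where all the real work in the paper lives, and your proposal does not supply it. Two specific issues.

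First, the filtration is not simply by ``the number of columns exhibiting bubbling.'' The paper filters by the coefficients $A(D)$ in the rightmost column, then $B(D)$ in the top row, then $\lvert\vec N\rvert$, then the end generator $y$, and only after showing that all pieces with $a\neq(l,\dots,l)$ or $b\neq 0$ or $y\neq x^{\Id}$ or some $N_j>1$ are acyclic does one see the surviving pieces indexed by $l\ge 0$ and a subset $S\subseteq\{1,\dots,n\}$. Without that chain of reductions the ``associated graded is $\binom{n}{k}$ copies of $\CD_*$'' claim is not justified, since type~III and type~IV differentials mix partitions of different lengths.

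Second, and more importantly, the degeneration itself is not a matter of enumerating cycles modulo boundaries: $\CDP_k$ has infinite rank for $k\ge 1$ (the triples $(c_x,N\vec e_j,(N))$ give infinitely many generators), so your ``backup plan'' does not terminate. The paper instead exhibits explicit cycles and then explicit dual $\Z/2$-cocycles detecting them. In degree~$1$ these are the cycles $g_j=(c_{x^{\Id}},\vec e_j,(1))$ together with cocycles $r_j := N_j + f_j$, where $N_j$ reads off the $j$-th entry of $\vec N$ and $f_j$ is a $1$-cochain on $\CD_*$ with $\delta f_j$ supported on $V_j$ and $H_j$; verifying $\delta r_j=0$ requires checking several cases involving the type~II--IV differentials. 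In degree~$2$ one needs both the cocycles $r_{j,k}:=N_jN_k+f_j^k+f_k^j$ and a nontrivial modification $U'$ of $(U,0,0)$ obtained by adding planar rectangles with partitions so that the type~II contributions from the annuli in $U$ cancel. Degree~$3$ is handled by a further cocycle $rr_j$ built from $r_j$ via multiplication by the rightmost vertical annulus, together with the cycles $U_j'$. None of this is a formal consequence of the $E^1$ computation, so your proof is incomplete as written.
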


In this paper, we will show existence and uniqueness of sign assignments for $\CDP_*$.

\begin{theorem}A sign assignment $s$ on $\CDP_*$ exists, and is unique up to gauge transformations and the values of 
\begin{align*}s_j:= s(c_{x^{\Id}}, \vec{e}_j, (1)).\end{align*}\label{prop6}\end{theorem}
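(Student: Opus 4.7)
The plan is to treat sign assignments on $\CDP_*$ as $1$-cochains satisfying a coboundary identity, in parallel with the obstruction-theoretic proof of Theorem \ref{prop11} for $\CD_*$. Concretely, a sign assignment is a $1$-cochain $s \in C^1(\CDP_*; \Z/2)$ whose coboundary $\delta s$ equals a distinguished $2$-cocycle $\omega_{\CDP}$ encoding the square/pentagon identity relations among the $2$-chains. In this encoding, existence of $s$ is equivalent to $[\omega_{\CDP}] = 0 \in H^2(\CDP_*; \Z/2)$, while uniqueness up to gauge (i.e., modulo $1$-coboundaries) amounts to parameterizing gauge classes by $H^1(\CDP_*; \Z/2)$, which by Theorem \ref{prop8}(1) and universal coefficients is isomorphic to $(\Z/2)^n$.

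For existence, I would leverage the sign assignment $S$ on the full complex $\CD_*$ supplied by Theorem \ref{prop11}. Since $\CDP_*$ is built from the bubbling/lower-dimensional portions of the same moduli spaces underlying $\CD_*$, there is a natural chain map relating the two complexes whose transpose sends $\omega_{\CD}$ to $\omega_{\CDP}$; applying this transpose to $S$ produces the required cochain $s$ with $\delta s = \omega_{\CDP}$. The bookkeeping here reduces to verifying compatibility of $\delta$ under the chain map, which should follow directly from the construction of $\CDP_*$.

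For uniqueness, the difference of two sign assignments is a $1$-cocycle, and two assignments are gauge equivalent precisely when this difference is a $1$-coboundary; hence gauge classes form a torsor over $H^1(\CDP_*; \Z/2) \cong (\Z/2)^n$. To complete the theorem it suffices to show that the evaluation map
\begin{equation*}
H^1(\CDP_*; \Z/2) \longrightarrow (\Z/2)^n, \qquad [\phi] \mapsto \bigl(\phi(c_{x^{\Id}}, \vec{e}_j, (1))\bigr)_{j=1}^n,
\end{equation*}
is an isomorphism. Being a linear map between two copies of $(\Z/2)^n$, injectivity is enough; I would check this by exhibiting the chains $(c_{x^{\Id}}, \vec{e}_j, (1))$ as cycles representing a basis of $H_1(\CDP_*; \Z/2)$, which in turn requires inspecting the generators of $H_1$ produced by the computation in Theorem \ref{prop8}.

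The principal obstacle is precisely this last identification: showing the particular chains $(c_{x^{\Id}}, \vec{e}_j, (1))$ span $H_1(\CDP_*; \Z/2)$. This will require unwinding the filtration argument used to compute $H_1(\CDP_*; \Z/2)$, and verifying that the $n$ classes arising from the identity generator $c_{x^{\Id}}$ and the standard basis directions $\vec{e}_j$ are nontrivial and linearly independent---while also confirming that no further independent classes originate from other generators or labels. A secondary issue is carefully constructing the chain map relating $\CDP_*$ and $\CD_*$ and checking that pulling back a sign assignment indeed yields a valid sign assignment on $\CDP_*$ rather than merely a weaker cochain-level relation.
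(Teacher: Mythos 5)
Your overall framework (sign assignments as $1$-cochains $s$ with $\delta s$ equal to a prescribed obstruction $2$-cochain; existence from vanishing of the obstruction class; gauge classes a torsor over $H^1(\CDP_*;\Z/2)\cong(\Z/2)^n$ detected by evaluation on the cycles $(c_{x^{\Id}},\vec e_j,(1))$) is exactly the paper's, and your uniqueness half coincides with the paper's: the needed fact that the $g_j=(c_{x^{\Id}},\vec e_j,(1))$ form a basis of $H_1$, detected by explicit dual cocycles $r_j$ with $r_j(g_k)=\delta_{jk}$, is precisely the content of Theorem \ref{prop8}(1), so what you flag as the ``principal obstacle'' is already available by citation. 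Your existence half, however, takes a genuinely different route. The paper proves existence by directly verifying that the obstruction cochain $T$ (Definition \ref{def7}) is a cocycle and then evaluating it on all $\binom{n}{2}+1$ explicit generators of $H_2(\CDP_*)$ from Theorem \ref{prop8}(2) (the classes $g_{j,k}$ and the corrected class $U'$), which requires the full $H_2$ computation. Your pullback idea avoids that computation entirely: the relevant map is the projection $f:\CDP_*\to\CD_*$ sending $(D,0,0)\mapsto D$ and killing every triple with $\vec N\neq 0$; one checks $f^*T_{\CD}=T_{\CDP}$ on the nose, so $s:=f^*S$ satisfies $\delta s=T_{\CDP}$ and realizes the case all $s_j=0$ (consistent with Proposition \ref{prop7}). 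This is essentially the alternative proof the paper itself sketches in the remark following Proposition \ref{prop7} (extend a sign assignment from $\CD_*$ by $s(c_x,N\vec e_j,(N))=Ns_j$), and it is arguably more economical; the paper's direct obstruction computation has the advantage of being the template that will still apply where no such pullback exists (e.g.\ for frame assignments).

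The one genuine gap is that you assert rather than establish that your ``natural chain map'' exists and is a chain map, and this is not automatic. The inclusion $\CD_*\hookrightarrow\CDP_*$, $D\mapsto(D,0,0)$, is \emph{not} a chain map (the type II terms break it), so you must use the projection $f$ above, and the verification that $f\circ\partial=\partial\circ f$ over $\Z/2$ hinges on a specific cancellation: for a triple $(D,N\vec e_j,(N))$ with a length-one partition, the initial and final reductions both yield $(D,0,0)$ and cancel mod $2$, while for total length $\ge 2$ no single type IV term can annihilate $\vec N$. Without isolating this point your ``the bookkeeping reduces to verifying compatibility'' leaves the only nontrivial step of the existence argument unproved. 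You should also note explicitly that $f^*S$ vanishes on all triples $(c_x,N\vec e_j,(N))$ and check that conditions (3)--(5) of Definition \ref{def7} then hold (each follows from the same pairwise cancellations of reductions); once these verifications are written out, your proof is complete.
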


(The elements $(c_{x^{\Id}}, \vec{e}_j, (1)) \in \CDP_*$ will be defined later in Section $4$.)

Just like for $\CD_*$, we can use Theorem $\ref{prop6}$ to define $\CDP_*$ with $\Z$ coefficients. We have the following analogue of Theorem $\ref{prop8}$.







It remains to find a frame assignment for $\CDP_*$ using the above homology computation, and to complete the construction of the $1$-flow category for the full grid, which we will carry out in a future paper. This present paper may be treated as a prelude thereof.


\textbf{Acknowledgements.} The author would like to thank Sucharit Sarkar for many helpful conversations, as well as the referee for many helpful suggestions and corrections. This work was supported by an NSF grant DMS-2136090.

\section{The Obstruction Complex}

Definitions related to grid diagrams are summarized below. For details, see \cite{MOS, MOST, OSS}.

\begin{itemize}

\item An index $n$ grid diagram $\G$ is a torus together with $n$ $\alpha$-circles (drawn horizontally) and $n$ $\beta$-circles (drawn vertically). The complements of the $\alpha$ (respectively, $\beta$) circles are called the horizontal (respectively, vertical) annuli---the complements of the $\alpha$ and $\beta$ circles are called the square regions.

\item Each vertical and horizontal annulus contains exactly one $X$ and $O$ marking, which are arbitrarily labelled $X_1, \dots, X_n$ and $O_1, \dots, O_n$.

\item The horizontal (respectively, vertical) annuli can be labeled by which $O$-marking they pass through---write $H_i$ (respectively, $V_i$) for the horizontal (respectively, vertical) annulus passing through $O_i$.

\item Given a fixed planar drawing of the grid, we can also label the the $\alpha$ circles $\alpha_1, \dots, \alpha_n$ from bottom to top, and the $\beta$ circles $\beta_1, \dots, \beta_n$ from left to right. The annuli can also be labelled by which sets of $\alpha$ or $\beta$ circles they lie between---write $H_{(i)}$ (respectively, $V_{(i)}$) for the horizontal annulus between $\alpha_i$ and $\alpha_{i + 1}$ (respectively, vertical annulus between $\beta_i$ and $\beta_{i + 1}$). Note that $H_{(n)}$ and $V_{(n)}$ lie between $\alpha_n$ and $\alpha_1$, and $\beta_n$ and $\beta_1$, respectively.

\item A generator is an unordered $n$-tuple of points such that each $\alpha$ and $\beta$ circle contains exactly one. Generators can equivalently be viewed a $\Z$-linear combination of $n$ points, or alternatively as permutations---for a permutation $\sigma \in S_n$ the generator $x^{\sigma}$ is the unique generator with a point at each $\alpha_{\sigma(i)} \cap \beta_{i}$. In this paper we will use the convention that $[a_1 a_2\dots a_n]$ denotes the permutation $\sigma \in S_n$ where $\sigma(j) = a_j$ for each $j$. For instance, Figure $\ref{fig1}$ shows the generator $x^{[51243]}$, which we will interchangeably denote as $[51243]$.

\item A domain is a $\Z$-linear combination of square regions with the property that $\partial(\partial D \cap \alpha) = y - x$ for some generators $x, y$. We say that $D$ is a domain from $x$ to $y$, and write $D \in \mathscr{D}(x, y)$. $D$ is said to be positive if none of its coefficients are negative, in which case we would write $D \in \mathscr{D}^+(x, y)$.

\item Given $D \in \mathscr{D}(x, y), E \in \mathscr{D}(y, z)$, we get a domain $D * E \in \mathscr{D}(x, z)$ by adding $D$ and $E$ as $2$-chains.

\item The constant domain from a generator $x$ to itself is the domain $c_x \in \mathscr{D}(x, x)$ whose coefficients are zero in every square region.

\item For every domain $D$, there is an associated integer $\mu(D)$ called its Maslov index, which satisfies: \begin{itemize}\item $\mu(D * E) = \mu(D) + \mu(E)$ \item For a positive domain $D$, $\mu(D) \geq 0$, with equality if and only if $D$ is some constant domain. \item For $D \in \mathscr{D}^+(x, y)$, $\mu(D) = 1$ if and only if $D$ is a rectangle: that is, its bottom left and top right corners are coordinates of $x$, its bottom right and top left corners are coordinates of $y$, and the other $n - 2$ coordinates of $x$ and $y$ agree and do not lie in $D$. \item $\mu(D) = k$ if and only if $D$ can be decomposed (not necessarily uniquely) into $k$ rectangles $D = R_1 * \cdots * R_k$.\end{itemize}

\end{itemize}

It will be particularly helpful to classify positive index $2$ domains, which are exactly those that can be decomposed as two rectangles. Thus every positive index $2$ domain $D \in \mathscr{D}^+(x, y)$ is a horizontal or vertical annulus\\
\begin{tikzpicture}

\filldraw[gray] (1, 0) rectangle (3.5, 1);
\draw (1, 0) -- (3.5, 0);
\draw (1, 1) -- (3.5, 1);

\node[] at (0.5, 0.5) {$\dots$};
\node[] at (4, 0.5) {$\dots$};

\foreach \i\j in {2/0, 2.5/1}{

\filldraw[black] (\i, \j) circle (0.1);

\draw (\i, \j) circle (0.2);

}

\node[] at (5, 0.5) {$\text{or}$};

\filldraw[gray] (6, -1) rectangle (7, 2);
\draw (6, -1) -- (6, 2);
\draw (7, -1) -- (7, 2);

\node[] at (6.5, 2.5) {$\vdots$};
\node[] at (6.5, -1.5) {$\vdots$};

\foreach \i\j in {6/0, 7/1}{

\filldraw[black] (\i, \j) circle (0.1);

\draw (\i, \j) circle (0.2);

}

\end{tikzpicture}
\\or two rectangles (overlapping or disjoint)\\
\begin{tikzpicture}

\draw[fill=gray] (0, 0) rectangle (3, 1);
\draw[fill=gray] (1, -1) rectangle (2, 2);
\draw[fill=black] (1, 0) rectangle (2, 1);

\foreach \i\j in {0/0, 1/-1, 2/2, 3/1}{

\filldraw[black] (\i, \j) circle (0.1);

}

\foreach \i\j in {0/1, 1/2, 2/-1, 3/0}{

\draw (\i, \j) circle (0.1);

}

\node[] at (4, 0.5) {$\text{or}$};

\draw[fill=gray] (5, -1) rectangle (6, 2);
\draw[fill=gray] (7, 0) rectangle (9, 1);

\foreach \i\j in {5/-1, 6/2, 7/0, 9/1}{

\filldraw[black] (\i, \j) circle (0.1);

}

\foreach \i\j in {5/2, 6/-1, 7/1, 9/0}{

\draw (\i, \j) circle (0.1);

}

\end{tikzpicture}\\
or a hexagon of the following shape\\
\begin{tikzpicture}

\draw[fill=gray] (0, 0) -- (2, 0) -- (2, 2) -- (1, 2) -- (1, 1) -- (0, 1) -- (0, 0);
\draw[fill=gray] (4, 0) -- (6, 0) -- (6, 1) -- (5, 1) -- (5, 2) -- (4, 2) -- (4, 0);
\draw[fill=gray] (8, 0) -- (8, 2) -- (10, 2) -- (10, 1) -- (9, 1) -- (9, 0) -- (8, 0);
\draw[fill=gray] (12, 1) -- (12, 2) -- (14, 2) -- (14, 0) -- (13, 0) -- (13, 1) -- (12, 1);

\node[] at (3, 1) {$\text{or}$};
\node[] at (7, 1) {$\text{or}$};
\node[] at (11, 1) {$\text{or}$};
\node[] at (14.5, 1) {.};

\foreach \i\j in {0/0, 1/1, 2/2, 4/0, 5/2, 6/1, 8/0, 9/1, 10/2, 12/1, 13/0, 14/2}{

\filldraw[black] (\i, \j) circle (0.1);

}

\foreach \i\j in {0/1, 1/2, 2/0, 4/2, 5/1, 6/0, 8/2, 9/0, 10/1, 12/2, 13/1, 14/0}{

\draw (\i, \j) circle (0.1);

}

\end{tikzpicture}

(Here the generator $x$ is shown by \textbullet while $y$ is shown by $\circ$.) Note that while a horizontal or vertical annulus admits exactly one decomposition into rectangles, all the other positive index $2$ domains admit exactly two.

Given a grid diagram $\G$, we define the complex of positive domains, on which our desired sign assignment can be constructed as a cochain.

\begin{definition}The complex of positive domains $\CD_* = \CD_*(\G; \Z/2)$ is freely generated over $\Z/2$ by the positive domains, with the homological grading being the Maslov index:
\begin{align*}\CD_k = \Z/2\langle \{ (x, y, D) \: | \: D \in \mathscr{D}^+(x, y), \mu(D) = k \} \rangle. \end{align*}
Sometimes the generators $x, y$ will be omitted. The differential $\partial: \CD_k \rightarrow \CD_{k - 1}$ of $D \in \mathscr{D}^+(x, y)$ is given by
\begin{align*}\partial(D) = \sum\limits_{R * E = D} E + \sum\limits_{E * R = D} E,\end{align*}
where $R$ is a rectangle, and $E$ is a positive domain.\label{def1}\end{definition}

Note that $\CD_*$ is independent of the placement of the $X$'s and $O$'s.

\begin{lemma}$(\CD_*, \partial)$ is a chain complex, that is, $\partial^2 = 0$. \label{lem1}\end{lemma}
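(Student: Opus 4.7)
The plan is to expand $\partial^2 D$ for a generator $D \in \mathscr{D}^+(x, y)$ of Maslov index $k$ and show that the coefficient of each positive $E'$ of Maslov index $k - 2$ vanishes mod $2$. Each summand of $\partial^2 D$ arises by removing one rectangle, then another, where each removal is from the left (source) or right (target) end of the intermediate domain. Labeling the four combinations of sides LL, LR, RL, and RR, the corresponding decompositions are
\begin{align*}
\text{LL: } D = R_1 * R_2 * E', & \quad \text{LR: } D = R_1 * E' * R_2, \\
\text{RL: } D = R_2 * E' * R_1, & \quad \text{RR: } D = E' * R_2 * R_1,
\end{align*}
where $R_1$ denotes the first rectangle removed and $R_2$ the second.

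The LR and RL contributions to the coefficient of $E'$ are both equal to the number of ordered decompositions $D = A * E' * B$ with $A, B$ rectangles (an LR summand sets $A = R_1, B = R_2$; an RL summand sets $A = R_2, B = R_1$), so $\mathrm{LR} + \mathrm{RL} \equiv 0 \pmod 2$. For the LL and RR contributions, I would group each by its intermediate positive Maslov-$2$ domain, which equals $D - E'$ as a $2$-chain. The classification of positive Maslov-$2$ domains from just before Definition $\ref{def1}$ shows that this $2$-chain admits exactly two ordered rectangle decompositions unless it is a horizontal or vertical annulus, in which case it admits exactly one. So modulo $2$, both the LL and the RR contributions to $E'$ reduce to the indicator that $D - E'$ is an annulus with the correct source-target structure.

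Since horizontal and vertical annuli are cycles, any such annulus lies in $\mathscr{D}(w, w)$ for every generator $w$ and nowhere else, so whenever $D - E'$ is an annulus we must have $E' \in \mathscr{D}(x, y)$, and this same condition triggers both LL and RR to contribute $1$. Hence $\mathrm{LL} + \mathrm{RR} \equiv 0 \pmod 2$ as well, completing the cancellation. I expect the main delicate point to be this matching of annular contributions: the non-annular part of the classification yields immediate mod-$2$ cancellation, but the annular case requires the cycle property of the $H_i$ and $V_i$ in order to synchronize the LL and RR counts.
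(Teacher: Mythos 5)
Your proposal is correct and follows essentially the same route as the paper: split $\partial^2 D$ into the four LL/LR/RL/RR sums, cancel LR against RL directly, and handle LL and RR via the classification of positive index~$2$ domains (exactly two rectangle decompositions unless the domain is an annulus, in which case exactly one, with the surviving annular LL and RR terms pairing off). The only difference is that you spell out slightly more explicitly why the annular LL and RR contributions land on the same generator $(x,y,E')$ of the complex, a point the paper's proof leaves implicit.
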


\begin{proof}Let $R$ and $S$ denote rectangles, then
\begin{align*}\partial^2(D) = \sum\limits_{R * S * E = D} E + \sum\limits_{R * E * S = D} E + \sum\limits_{S * E * R = D} E + \sum\limits_{E * S * R = D} E\end{align*}
The second and third terms cancel (modulo $2$). If $R * S$ is a hexagon or two rectangles, then it has exactly one other decomposition $R * S = R' * S'$, so $R * S * E$ and $R' * S' * E$ cancel in the first term. Similarly, $E * R * S$ and $E * R' * S'$ cancel in the last term. Finally, if $R * S$ is not a hexagon or two rectangles, it must be a horizontal or vertical annulus, and then the terms $E * R * S$ and $R * S * E$ in the first and last term cancel, and so $\partial^2(D) = 0$.\end{proof}

We now compute the homology $\CD_*$ by constructing filtrations, for which we need the following fact. Given two generators $x$ and $y$, we say that $x \leq y$ if there exists a positive domain from $y$ to $x$ that does not intersect the topmost row $H_{(n)}$ or rightmost column $V_{(n)}$ of the grid. It is clear that the set of generators with $\leq$ is a partially ordered set (which actually coincides with the opposite of the Bruhat ordering on the symmetric group $S_n$---see \cite[Section 3.2]{MS}).



\begin{proof}[Proof of Proposition \ref{prop1}] The proof is nearly identical to the proof of \cite[Proposition 3.4]{MS}, so we present the most relevant parts. To $D \in \CD_*$ associate $A(D) \in \N^n$ by its coefficients in the rightmost vertical annulus. Note that here, unlike in \cite{MS}, $A(D)$ is an $n$-tuple, since there is no assumption that domains do not pass through the top right corner. By definition, the differential only preserves or lowers $A(D)$, so it is a filtration on $\CD_*$. Now let $\CD_*^{a}$ be the associated graded complex in filtration grading $A(D) = a$.

Let $M(D) = \min \{ \text{coordinates of } A(D) \}$---by definition, a positive domain $D$ contains exactly $M(D)$ copies of the rightmost vertical annulus $V_{(n)}$, so write $D = D' * M(D) V_{(n)}$. $A(D')$ contains a $0$, so without loss of generality (since the differential of $\CD_*^{a}$ does not change $A(D)$ and thus does not change where the $0$ is located) $D'$ does not contain the top right corner. Now let $B(D) \in \N^{n - 1}$ be the coordinates of $D'$ in the top row (except the top right corner). Similarly, $B(D)$ is a filtration on the associated graded complex $\CD_*^{a}$, so let $\CD_*^{a, b}$ be the associated graded complex in grading $A(D) = a, B(D) = b$.

Now fix $(a, b)$ and consider the differential $\partial$ on $\CD_*^{a, b}$. Consider the following new filtration. For any domain $D \in \mathscr{D}^+(x, y)$ with $A(D) = a, B(D) = b$, let the generator $y$ be its filtration grading. With respect to the aforementioned partial ordering of the generators, $\partial$ preserves or decreases $y$ since we only consider removing domains that do not pass through the topmost row and rightmost column. Therefore $y$ is a filtration grading, so let $\CD_*^{a, b, y}$ be the associated graded complex with respect to this filtration. Unless $a = (l, l, \dots, l)$, $b = 0$, and $y = x^{\Id}$, the proof of \cite{MS} shows that $\CD_*^{a, b, y}$ is acyclic. When $a = (l, l, \dots, l)$, $b = 0$, and $y = x^{\Id}$, the complex $\CD_*^{a, b, y}$ has one generator (since $x^{\Id}$ is maximal), which is represented by the domain $lV_{(n)}$, lying in grading $2l$. 

Finally, because the associated graded complex has homology only in even gradings, $\CD_*$ must have the same homology.\end{proof}


In order to later remove obstructions in grading $2$, we now explicitly find the generator $U$ of $H_2(\CD_*)$. We define the following index $2$ domains:
\begin{itemize}
\item $A_1, \dots, A_{n - 1}$ where $A_i$ is the vertical annulus in the $(n-i)^{th}$ column from the left from the generator $[n23\dots (n-i)1(n-i+1)\dots (n-1)]$ to itself, and $A_0$ is the rightmost vertical annulus from the identity generator $x^{\Id}$ to itself.

\item $B_1, \dots, B_{n - 1}$ where $B_i$ is the horizontal annulus in the $(n-i)^{th}$ row from the bottom from the generator $[(n-i+1)23\dots (n-i)(n-i+2)\dots n1]$ to itself, and $B_0$ is the topmost horizontal annulus from the identity generator $x^{\Id}$ to itself.

\item $C_1, \dots, C_{n - 2}$ where $C_i$ is a hexagon from the generator $[n23\dots (n-i)1(n-i+1)\dots (n - 1)]$ to the generator $[12\dots(n-i-1)n(n-i)\dots(n-1)]$.

\item $D_1, \dots, D_{n - 2}$ where $D_i$ is a hexagon from the generator $[(n - i + 1)23\dots (n-i)(n - i + 2)\dots n1]$ to the generator $[12\dots(n-i-1)(n-i + 1)\dots n (n - i)]$.

\item $E_1, \dots, E_{n - 2}$ where $E_i$ is a hexagon from the generator $[12\dots (n - i - 1)n(n-i+1)\dots (n-1)(n-i)]$ to the generator $[12\dots (n-i-2)n(n-i)\dots (n-1)(n-i-1)]$.

\item $F_{i, 1}, \dots, F_{i, n - i - 2}$ for each $i = 1, \dots n - 3$, where $F_{i, j}$ is a hexagon from the generator $[12\dots (n - i - j - 2)(n-i-j)(n-i)(n-i-j+1)\dots (n-i-1) (n - i + 1) \dots n(n-i-j-1)]$ to the generator $[12\dots (n - i - j - 2)(n-i + 1)(n-i-j)(n-i-j+1)\dots (n-i) (n - i + 2) \dots n(n-i-j-1)]$.

\item $G_{i, 1}, \dots, G_{i, n - i - 2}$ for each $i = 1, \dots n - 3$, where $G_{i, j}$ is a hexagon from the generator $[12\dots (n - i - j - 2)n(n-i-j-1)(n-i-j+1)\dots (n-i-1) (n - i - j)(n - i) \dots (n - 1)]$ to the generator $[12\dots (n - i - j - 2)n(n-i-j)\dots(n - i) (n - i - j - 1)(n - i + 1)\dots (n - 1)]$.

\end{itemize}
(see Figure $\ref{fig2}$)

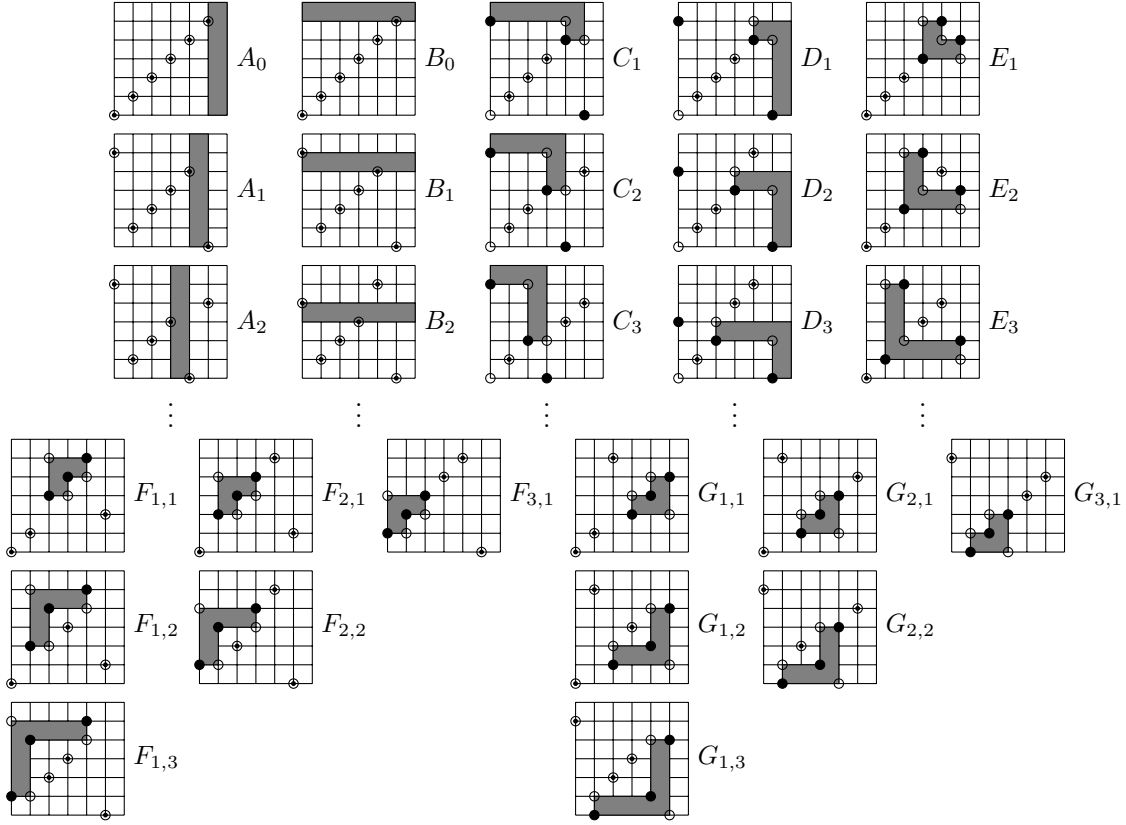
\begin{figure}\begin{tikzpicture}[scale=0.25]

\foreach \i in {0,1,...,4}{
    \foreach \j in {0,1,2}{

      \begin{scope}[xshift=10*\i cm,yshift=-7*\j cm] 

        \draw (0,0) grid (6,6);
        
      \end{scope}

}

\node[anchor=north] at (3 + 10*\i, -14) {\vdots};

}

  \begin{scope}[xshift=0cm, yshift=0 cm]
    \node[anchor=west] at (6,3) {$A_0$};

    \draw[fill=gray] (5,0) rectangle (6,6);

    \foreach \a/\b in {0/0, 1/1, 2/2, 3/3, 4/4, 5/5}{

      \filldraw[black] (\a,\b) circle (0.1);
      \draw (\a,\b) circle (0.25);

    }
    
  \end{scope}

  \begin{scope}[xshift=0cm, yshift=-7 cm]
    \node[anchor=west] at (6,3) {$A_1$};

    \draw[fill=gray] (4,0) rectangle (5,6);

    \foreach \a/\b in {0/5,1/1,2/2,3/3,4/4,5/0}{

      \filldraw[black] (\a,\b) circle (0.1);
      \draw (\a,\b) circle (0.25);

    }
    
  \end{scope}

  \begin{scope}[xshift=0cm, yshift=-14 cm]
    \node[anchor=west] at (6,3) {$A_2$};

    \draw[fill=gray] (3,0) rectangle (4,6);

    \foreach \a/\b in {0/5, 1/1, 2/2, 3/3, 4/0, 5/4}{

      \filldraw[black] (\a,\b) circle (0.1);
      \draw (\a,\b) circle (0.25);

    }
    
  \end{scope}

    \begin{scope}[xshift=10cm, yshift=0 cm]
    \node[anchor=west] at (6,3) {$B_0$};

    \draw[fill=gray] (0,5) rectangle (6,6);

    \foreach \a/\b in {0/0, 1/1, 2/2, 3/3, 4/4, 5/5}{

      \filldraw[black] (\a,\b) circle (0.1);
      \draw (\a,\b) circle (0.25);

    }
    
  \end{scope}

  \begin{scope}[xshift=10cm, yshift=-7 cm]
    \node[anchor=west] at (6,3) {$B_1$};

    \draw[fill=gray] (0,4) rectangle (6,5);

    \foreach \a/\b in {0/5,1/1,2/2,3/3,4/4,5/0}{

      \filldraw[black] (\a,\b) circle (0.1);
      \draw (\a,\b) circle (0.25);

    }
    
  \end{scope}

  \begin{scope}[xshift=10cm, yshift=-14 cm]
    \node[anchor=west] at (6,3) {$B_2$};

    \draw[fill=gray] (0,3) rectangle (6,4);

    \foreach \a/\b in {0/4, 1/1, 2/2, 3/3, 4/5, 5/0}{

      \filldraw[black] (\a,\b) circle (0.1);
      \draw (\a,\b) circle (0.25);

    }
    
  \end{scope}

    \begin{scope}[xshift=20cm, yshift=0 cm]
    \node[anchor=west] at (6,3) {$C_1$};

    \draw[fill=gray] (0, 5) to (0, 6) to (5, 6) to (5, 4) to (4, 4) to (4, 5) to (0, 5);

    \foreach \a/\b in {1/1, 2/2, 3/3}{

      \filldraw[black] (\a,\b) circle (0.1);
      \draw (\a,\b) circle (0.25);

    }

    \foreach \a/\b in {0/5, 4/4, 5/0}{

      \filldraw[black] (\a,\b) circle (0.25);

    }

    \foreach \a/\b in {0/0, 4/5, 5/4}{

      \draw (\a,\b) circle (0.25);

    }
    
  \end{scope}

  \begin{scope}[xshift=20cm, yshift=-7 cm]
    \node[anchor=west] at (6,3) {$C_2$};

    \draw[fill=gray] (0, 5) to (0, 6) to (4, 6) to (4, 3) to (3, 3) to (3, 5) to (0, 5);

    \foreach \a/\b in {1/1, 2/2, 5/4}{

      \filldraw[black] (\a,\b) circle (0.1);
      \draw (\a,\b) circle (0.25);

    }

    \foreach \a/\b in {0/5, 3/3, 4/0}{

      \filldraw[black] (\a,\b) circle (0.25);

    }

    \foreach \a/\b in {0/0, 3/5, 4/3}{

      \draw (\a,\b) circle (0.25);

    }
    
  \end{scope}

  \begin{scope}[xshift=20cm, yshift=-14 cm]
    \node[anchor=west] at (6,3) {$C_3$};

    \draw[fill=gray] (0, 5) to (0, 6) to (3, 6) to (3, 2) to (2, 2) to (2, 5) to (0, 5);

    \foreach \a/\b in {1/1, 4/3, 5/4}{

      \filldraw[black] (\a,\b) circle (0.1);
      \draw (\a,\b) circle (0.25);

    }

    \foreach \a/\b in {0/5, 2/2, 3/0}{

      \filldraw[black] (\a,\b) circle (0.25);

    }

    \foreach \a/\b in {0/0, 2/5, 3/2}{

      \draw (\a,\b) circle (0.25);

    }
    
  \end{scope}

    \begin{scope}[xshift=30cm, yshift=0 cm]
    \node[anchor=west] at (6,3) {$D_1$};

    \draw[fill=gray] (5, 0) to (5, 4) to (4, 4) to (4, 5) to (6, 5) to (6, 0) to (5, 0);

    \foreach \a/\b in {1/1, 2/2, 3/3}{

      \filldraw[black] (\a,\b) circle (0.1);
      \draw (\a,\b) circle (0.25);

    }

    \foreach \a/\b in {0/5, 4/4, 5/0}{

      \filldraw[black] (\a,\b) circle (0.25);

    }

    \foreach \a/\b in {0/0, 4/5, 5/4}{

      \draw (\a,\b) circle (0.25);

    }
    
  \end{scope}

  \begin{scope}[xshift=30cm, yshift=-7 cm]
    \node[anchor=west] at (6,3) {$D_2$};

    \draw[fill=gray] (5, 0) to (5, 3) to (3, 3) to (3, 4) to (6, 4) to (6, 0) to (5, 0);

    \foreach \a/\b in {1/1, 2/2, 4/5}{

      \filldraw[black] (\a,\b) circle (0.1);
      \draw (\a,\b) circle (0.25);

    }

    \foreach \a/\b in {0/4, 3/3, 5/0}{

      \filldraw[black] (\a,\b) circle (0.25);

    }

    \foreach \a/\b in {0/0, 3/4, 5/3}{

      \draw (\a,\b) circle (0.25);

    }
    
  \end{scope}

  \begin{scope}[xshift=30cm, yshift=-14 cm]
    \node[anchor=west] at (6,3) {$D_3$};

   \draw[fill=gray] (5, 0) to (5, 2) to (2, 2) to (2, 3) to (6, 3) to (6, 0) to (5, 0);

    \foreach \a/\b in {1/1, 3/4, 4/5}{

      \filldraw[black] (\a,\b) circle (0.1);
      \draw (\a,\b) circle (0.25);

    }

    \foreach \a/\b in {0/3, 2/2, 5/0}{

      \filldraw[black] (\a,\b) circle (0.25);

    }

    \foreach \a/\b in {0/0, 2/3, 5/2}{

      \draw (\a,\b) circle (0.25);

    }
    
  \end{scope}

    \begin{scope}[xshift=40cm, yshift=0 cm]
    \node[anchor=west] at (6,3) {$E_1$};

    \draw[fill=gray] (3, 5) to (4, 5) to (4, 4) to (5, 4) to (5, 3) to (3, 3) to (3, 5);

    \foreach \a/\b in {0/0, 1/1, 2/2}{

      \filldraw[black] (\a,\b) circle (0.1);
      \draw (\a,\b) circle (0.25);

    }

    \foreach \a/\b in {3/3, 4/5, 5/4}{

      \filldraw[black] (\a,\b) circle (0.25);

    }

    \foreach \a/\b in {3/5, 4/4, 5/3}{

      \draw (\a,\b) circle (0.25);

    }
    
  \end{scope}

  \begin{scope}[xshift=40cm, yshift=-7 cm]
    \node[anchor=west] at (6,3) {$E_2$};

    \draw[fill=gray] (2, 5) to (3, 5) to (3, 3) to (5, 3) to (5, 2) to (2, 2) to (2, 5);

    \foreach \a/\b in {0/0, 1/1, 4/4}{

      \filldraw[black] (\a,\b) circle (0.1);
      \draw (\a,\b) circle (0.25);

    }

    \foreach \a/\b in {2/2, 3/5, 5/3}{

      \filldraw[black] (\a,\b) circle (0.25);

    }

    \foreach \a/\b in {2/5, 3/3, 5/2}{

      \draw (\a,\b) circle (0.25);

    }
    
  \end{scope}

  \begin{scope}[xshift=40cm, yshift=-14 cm]
    \node[anchor=west] at (6,3) {$E_3$};

    \draw[fill=gray] (1, 5) to (2, 5) to (2, 2) to (5, 2) to (5, 1) to (1, 1) to (1, 5);

    \foreach \a/\b in {0/0, 3/3, 4/4}{

      \filldraw[black] (\a,\b) circle (0.1);
      \draw (\a,\b) circle (0.25);

    }

    \foreach \a/\b in {1/1, 2/5, 5/2}{

      \filldraw[black] (\a,\b) circle (0.25);

    }

    \foreach \a/\b in {1/5, 2/2, 5/1}{

      \draw (\a,\b) circle (0.25);

    }
    \end{scope}

\end{tikzpicture}

\begin{tikzpicture}[scale=0.25]

  \begin{scope}[xshift=0cm,yshift=0cm] 

        \draw (0, 0) grid (6, 6);
        \node[anchor=west] at (6, 3){$F_{1, 1}$};
        
      \end{scope}

\begin{scope}[xshift=0cm,yshift=-7cm] 

       \draw (0, 0) grid (6, 6);
       \node[anchor=west] at (6, 3){$F_{1, 2}$};
        
      \end{scope}

\begin{scope}[xshift=0cm,yshift=-14cm] 

      \draw (0, 0) grid (6, 6);
      \node[anchor=west] at (6, 3){$F_{1, 3}$};
        
      \end{scope}

 \begin{scope}[xshift=0cm, yshift=0cm]

    \draw[fill=gray] (2, 3) -- (2, 5) -- (4, 5) -- (4, 4) -- (3, 4) -- (3, 3) -- (2, 3);

    \foreach \a/\b in {0/0, 1/1, 5/2}{

      \filldraw[black] (\a,\b) circle (0.1);
      \draw (\a,\b) circle (0.25);

    }

    \foreach \a/\b in {2/3, 3/4, 4/5}{

      \filldraw[black] (\a,\b) circle (0.25);

    }

    \foreach \a/\b in {2/5, 3/3, 4/4}{

      \draw (\a,\b) circle (0.25);

    }
    
  \end{scope}

  \begin{scope}[xshift=0cm, yshift=-7cm]

    \draw[fill=gray] (1, 2) -- (1, 5) -- (4, 5) -- (4, 4) -- (2, 4) -- (2, 2) -- (1, 2);

    \foreach \a/\b in {0/0, 3/3, 5/1}{

      \filldraw[black] (\a,\b) circle (0.1);
      \draw (\a,\b) circle (0.25);

    }

    \foreach \a/\b in {1/2, 2/4, 4/5}{

      \filldraw[black] (\a,\b) circle (0.25);

    }

    \foreach \a/\b in {1/5, 2/2, 4/4}{

      \draw (\a,\b) circle (0.25);

    }
    
  \end{scope}

  \begin{scope}[xshift=0cm, yshift=-14cm]

    \draw[fill=gray] (0, 1) -- (0, 5) -- (4, 5) -- (4, 4) -- (1, 4) -- (1, 1) -- (0, 1);

    \foreach \a/\b in {2/2, 3/3, 5/0}{

      \filldraw[black] (\a,\b) circle (0.1);
      \draw (\a,\b) circle (0.25);

    }

    \foreach \a/\b in {0/1, 1/4, 4/5}{

      \filldraw[black] (\a,\b) circle (0.25);

    }

    \foreach \a/\b in {0/5, 1/1, 4/4}{

      \draw (\a,\b) circle (0.25);

    }
    
  \end{scope}

  \begin{scope}[xshift=10cm,yshift=0cm] 

        \draw (0, 0) grid (6, 6);
        \node[anchor=west] at (6, 3){$F_{2, 1}$};
        
      \end{scope}

\begin{scope}[xshift=10cm,yshift=-7cm] 

       \draw (0, 0) grid (6, 6);
       \node[anchor=west] at (6, 3){$F_{2, 2}$};
        
      \end{scope}

 \begin{scope}[xshift=10cm, yshift=0cm]

    \draw[fill=gray] (1, 2) -- (1, 4) -- (3, 4) -- (3, 3) -- (2, 3) -- (2, 2) -- (1, 2);

    \foreach \a/\b in {0/0, 4/5, 5/1}{

      \filldraw[black] (\a,\b) circle (0.1);
      \draw (\a,\b) circle (0.25);

    }

    \foreach \a/\b in {1/2, 2/3, 3/4}{

      \filldraw[black] (\a,\b) circle (0.25);

    }

    \foreach \a/\b in {1/4, 2/2, 3/3}{

      \draw (\a,\b) circle (0.25);

    }
    
  \end{scope}

  \begin{scope}[xshift=10cm, yshift=-7cm]

    \draw[fill=gray] (0, 1) -- (0, 4) -- (3, 4) -- (3, 3) -- (1, 3) -- (1, 1) -- (0, 1);

    \foreach \a/\b in {2/2, 4/5, 5/0}{

      \filldraw[black] (\a,\b) circle (0.1);
      \draw (\a,\b) circle (0.25);

    }

    \foreach \a/\b in {0/1, 1/3, 3/4}{

      \filldraw[black] (\a,\b) circle (0.25);

    }

    \foreach \a/\b in {0/4, 1/1, 3/3}{

      \draw (\a,\b) circle (0.25);

    }
    
  \end{scope}

  \begin{scope}[xshift=20cm,yshift=0cm] 

        \draw (0, 0) grid (6, 6);
        \node[anchor=west] at (6, 3){$F_{3, 1}$};
        
      \end{scope}

 \begin{scope}[xshift=20cm, yshift=0cm]

    \draw[fill=gray] (0, 1) -- (0, 3) -- (2, 3) -- (2, 2) -- (1, 2) -- (1, 1) -- (0, 1);

    \foreach \a/\b in {3/4, 4/5, 5/0}{

      \filldraw[black] (\a,\b) circle (0.1);
      \draw (\a,\b) circle (0.25);

    }

    \foreach \a/\b in {0/1, 1/2, 2/3}{

      \filldraw[black] (\a,\b) circle (0.25);

    }

    \foreach \a/\b in {0/3, 1/1, 2/2}{

      \draw (\a,\b) circle (0.25);

    }
    
  \end{scope}

\begin{scope}[xshift=30cm,yshift=0cm] 

        \draw (0, 0) grid (6, 6);
        \node[anchor=west] at (6, 3){$G_{1, 1}$};
        
      \end{scope}

\begin{scope}[xshift=30cm,yshift=-7cm] 

       \draw (0, 0) grid (6, 6);
       \node[anchor=west] at (6, 3){$G_{1, 2}$};
        
      \end{scope}

\begin{scope}[xshift=30cm,yshift=-14cm] 

      \draw (0, 0) grid (6, 6);
      \node[anchor=west] at (6, 3){$G_{1, 3}$};
        
      \end{scope}

 \begin{scope}[xshift=30cm, yshift=0cm]

    \draw[fill=gray] (3, 2) -- (5, 2) -- (5, 4) -- (4, 4) -- (4, 3) -- (3, 3) -- (3, 2);

    \foreach \a/\b in {0/0, 1/1, 2/5}{

      \filldraw[black] (\a,\b) circle (0.1);
      \draw (\a,\b) circle (0.25);

    }

    \foreach \a/\b in {3/2, 4/3, 5/4}{

      \filldraw[black] (\a,\b) circle (0.25);

    }

    \foreach \a/\b in {5/2, 3/3, 4/4}{

      \draw (\a,\b) circle (0.25);

    }
    
  \end{scope}

  \begin{scope}[xshift=30cm, yshift=-7cm]

    \draw[fill=gray] (2, 1) -- (5, 1) -- (5, 4) -- (4, 4) -- (4, 2) -- (2, 2) -- (2, 1);

    \foreach \a/\b in {0/0, 3/3, 1/5}{

      \filldraw[black] (\a,\b) circle (0.1);
      \draw (\a,\b) circle (0.25);

    }

    \foreach \a/\b in {2/1, 4/2, 5/4}{

      \filldraw[black] (\a,\b) circle (0.25);

    }

    \foreach \a/\b in {5/1, 2/2, 4/4}{

      \draw (\a,\b) circle (0.25);

    }
    
  \end{scope}

  \begin{scope}[xshift=30cm, yshift=-14cm]

    \draw[fill=gray] (1, 0) -- (5, 0) -- (5, 4) -- (4, 4) -- (4, 1) -- (1, 1) -- (1, 0);

    \foreach \a/\b in {2/2, 3/3, 0/5}{

      \filldraw[black] (\a,\b) circle (0.1);
      \draw (\a,\b) circle (0.25);

    }

    \foreach \a/\b in {1/0, 4/1, 5/4}{

      \filldraw[black] (\a,\b) circle (0.25);

    }

    \foreach \a/\b in {5/0, 1/1, 4/4}{

      \draw (\a,\b) circle (0.25);

    }
    
  \end{scope}

  \begin{scope}[xshift=40cm,yshift=0cm] 

        \draw (0, 0) grid (6, 6);
        \node[anchor=west] at (6, 3){$G_{2, 1}$};
        
      \end{scope}

\begin{scope}[xshift=40cm,yshift=-7cm] 

       \draw (0, 0) grid (6, 6);
       \node[anchor=west] at (6, 3){$G_{2, 2}$};
        
      \end{scope}

 \begin{scope}[xshift=40cm, yshift=0cm]

    \draw[fill=gray] (2, 1) -- (4, 1) -- (4, 3) -- (3, 3) -- (3, 2) -- (2, 2) -- (2, 1);

    \foreach \a/\b in {0/0, 5/4, 1/5}{

      \filldraw[black] (\a,\b) circle (0.1);
      \draw (\a,\b) circle (0.25);

    }

    \foreach \a/\b in {2/1, 3/2, 4/3}{

      \filldraw[black] (\a,\b) circle (0.25);

    }

    \foreach \a/\b in {4/1, 2/2, 3/3}{

      \draw (\a,\b) circle (0.25);

    }
    
  \end{scope}

  \begin{scope}[xshift=40cm, yshift=-7cm]

    \draw[fill=gray] (1, 0) -- (4, 0) -- (4, 3) -- (3, 3) -- (3, 1) -- (1, 1) -- (1, 0);

    \foreach \a/\b in {2/2, 5/4, 0/5}{

      \filldraw[black] (\a,\b) circle (0.1);
      \draw (\a,\b) circle (0.25);

    }

    \foreach \a/\b in {1/0, 3/1, 4/3}{

      \filldraw[black] (\a,\b) circle (0.25);

    }

    \foreach \a/\b in {4/0, 1/1, 3/3}{

      \draw (\a,\b) circle (0.25);

    }
    
  \end{scope}

  \begin{scope}[xshift=50cm,yshift=0cm] 

        \draw (0, 0) grid (6, 6);
        \node[anchor=west] at (6, 3){$G_{3, 1}$};
        
      \end{scope}

 \begin{scope}[xshift=50cm, yshift=0cm]

    \draw[fill=gray] (1, 0) -- (3, 0) -- (3, 2) -- (2, 2) -- (2, 1) -- (1, 1) -- (1, 0);

    \foreach \a/\b in {4/3, 5/4, 0/5}{

      \filldraw[black] (\a,\b) circle (0.1);
      \draw (\a,\b) circle (0.25);

    }

    \foreach \a/\b in {1/0, 2/1, 3/2}{

      \filldraw[black] (\a,\b) circle (0.25);

    }

    \foreach \a/\b in {3/0, 1/1, 2/2}{

      \draw (\a,\b) circle (0.25);

    }
    
  \end{scope}

\end{tikzpicture}\caption{The domains $A_i, B_i, C_i, D_i, E_i, F_{i, j}$, and $G_{i, j}$ in the special case of a $6 \times 6$ grid, where each domain is drawn from a generator $x$ (drawn by \textbullet) to a generator $y$ (drawn by $\circ$)}. \label{fig2}\end{figure}

Let
\begin{align*}U := \sum\limits_{i = 0}^{n - 1} (A_i + B_i) + \sum\limits_{i = 1}^{n - 2} (C_i + D_i) + \sum\limits_{i = 1}^{n - 2} E_i + \sum\limits_{i = i}^{n - 3} \sum\limits_{j = 1}^{n - i - 2} (F_{i, j} + G_{i, j})\end{align*}

\begin{prop}$U$ is the generator of $H_2(\CD_*)$\label{prop2}\end{prop}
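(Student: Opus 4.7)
The plan is to verify that $U$ is a cycle in $\CD_2$ and then show $[U]\neq 0$ in $H_2(\CD_*;\Z/2)$; since $H_2(\CD_*;\Z/2)\cong\Z/2$ by Proposition~\ref{prop1}, this will identify $U$ as the generator.

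For the cycle condition $\partial U=0$, I would apply Definition~\ref{def1} summand by summand: each annulus $A_i, B_i$ has a unique decomposition into two rectangles and so contributes those two rectangles to its boundary, while each hexagon $C_i, D_i, E_i, F_{i,j}, G_{i,j}$ has two decompositions and contributes the four associated rectangles. The specific choice of summands in $U$ is arranged precisely so that every rectangle appearing in one summand's boundary appears in exactly one other summand's boundary, canceling mod~$2$. At a high level, the boundary rectangles along the top row and rightmost column are paired between the annuli $A_0, B_0$ and the ``boundary'' hexagons $C_i, D_i$; the $E_i$ cancel the remaining diagonal rectangles descending from $x^{\Id}$; and the interior families $F_{i,j}, G_{i,j}$ are chosen precisely to match the leftover rectangles produced by $\partial C_i, \partial D_i, \partial E_i$. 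A systematic enumeration of rectangle types then confirms $\partial U=0$.

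To show $[U]\neq 0$, I would apply the $A$-filtration from the proof of Proposition~\ref{prop1}, where $A(D)\in\N^n$ records the coefficients of $D$ in the rightmost vertical annulus $V_{(n)}$. A direct inspection of Figure~\ref{fig2} shows $A_0=V_{(n)}$ is the unique summand of $U$ with $A(D)=(1,1,\dots,1)$: indeed $A(A_i)=0$ for $i\geq 1$, $A(B_i)=e_{n-i}$ for $i\geq 0$, $A(D_i)=(\underbrace{1,\dots,1}_{n-i},0,\dots,0)$ for $i\geq 1$, and $A$ vanishes on every $C_i, E_i, F_{i,j}, G_{i,j}$. Hence $U$ projects to $V_{(n)}$ in the associated graded $\CD_*^{(1,1,\dots,1)}$, which the proof of Proposition~\ref{prop1} identifies as the generator of $H_2(\CD_*^{(1,1,\dots,1)})\cong\Z/2$. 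Since the $E_1$ page of the induced spectral sequence is concentrated at filtrations $(l,l,\dots,l)$ with each contributing a class in grading $2l$, it collapses at $E_1$, and the nonvanishing of the associated graded class forces $[U]\neq 0$.

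The main obstacle is the cycle verification $\partial U=0$. The individual boundary computations are elementary, but the number of summands and boundary rectangles is substantial, and the pairing is delicate --- in particular, the interior families $F_{i,j}$ and $G_{i,j}$ are defined precisely so as to produce the rectangles that cancel the ``deep'' terms in $\partial C_i, \partial D_i, \partial E_i$, so the matching between these families is central. I would organize the computation by tabulating each rectangle that appears in $\partial U$ (indexed by the pair of generators it connects) and verifying directly that it occurs an even number of times.
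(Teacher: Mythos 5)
Your overall strategy is sound, and the first half of your argument (verifying $\partial U = 0$ by tabulating and pairing off the rectangles appearing in the boundaries of the summands $A_i, B_i, C_i, D_i, E_i, F_{i,j}, G_{i,j}$) is exactly what the paper does in Lemma~\ref{lem7}, so nothing to add there beyond noting that the pairing really does need to be carried out explicitly --- the paper introduces the auxiliary families $R_{m,i}$, $P_{i,j}$, $Q_{i,j}$, $R'_{m,i}$, $P'_{i,j}$, $Q'_{i,j}$ precisely to organize that bookkeeping.

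For the nonvanishing of $[U]$, however, you take a genuinely different route from the paper. The paper (Lemma~\ref{lem8}) constructs an explicit $2$-cochain $r$ that evaluates to $1$ on the rightmost vertical annulus $V_{(n)}$ (at any generator) and $0$ elsewhere, checks directly that $\delta r = 0$ by observing that any index-$3$ domain containing $V_{(n)}$ produces $V_{(n)}$ exactly twice in its boundary (once from each side), and then notes $r(U) = 1$ because $A_0 = V_{(n)}$ is the only annular summand of $U$ in the rightmost column. Your approach instead re-runs the $A$-filtration from the proof of Proposition~\ref{prop1}: you observe that $A_0$ is the unique summand of $U$ with $A$-value $(1,\dots,1)$ (your table of $A$-values for the other summands is correct), so $U$'s leading term in the filtration is $V_{(n)}$, which generates the surviving piece of the associated graded in degree $2$; since the associated graded homology is concentrated in even degrees, the spectral sequence degenerates and $[U]\neq 0$. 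Both arguments are valid. The paper's explicit cocycle is more elementary and self-contained --- it does not require you to track how $U$ sits in the filtration or to invoke degeneration --- whereas your argument leans on the spectral sequence machinery already set up for Proposition~\ref{prop1} and so avoids introducing a new cochain. One small caution if you flesh yours out: the filtration by $A(D)\in\N^n$ is indexed by a poset rather than a linear order, so to get a genuine spectral sequence you should pass to the $\Z$-valued filtration by $\lvert A(D)\rvert$ (or fix a total refinement), after which $V_{(n)}$ is the unique summand of maximal filtration and the degeneration argument goes through as you describe. The paper's cochain $r$ is worth keeping in mind anyway, since the same idea (and the related cochains $f_j$, $rr_j$) reappears in the proof of Theorem~\ref{prop8}.
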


Proposition $\ref{prop2}$ will follow from the following computational lemmas.

\begin{lemma}$U$ is a cycle in $\CD_2$ (that is, $\partial U = 0$). \label{lem7}\end{lemma}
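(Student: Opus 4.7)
The plan is to expand $\partial U$ as a $\Z/2$-linear combination of rectangles in $\CD_1$ and show that each rectangle in the sum appears an even number of times. The summands of $U$ split into two types, as classified in the excerpt: each annulus $A_i$ or $B_i$ admits a unique decomposition into two rectangles $R_1 * R_2$, and so contributes $R_1 + R_2$ to $\partial$; each hexagon $C_i, D_i, E_i, F_{i,j}, G_{i,j}$ admits exactly two decompositions $R_1 * R_2 = R_1' * R_2'$, and so contributes $R_1 + R_2 + R_1' + R_2'$.

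First I would list, for each family, the explicit rectangles appearing in its decomposition(s). For the annulus $A_i$, the two rectangles split the strip at the two boundary points of the generator $[n23\ldots(n-i)1(n-i+1)\ldots(n-1)]$ in the $(n-i)$th column; analogously for $B_i$. For each hexagon, the two decompositions correspond to the two interior corners at which the hexagon can be cut into two rectangles, and from Figure $\ref{fig2}$ the source generators of these rectangles are explicitly identifiable.

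Next I would pair up the rectangles in $\partial U$. The sum $U$ is designed precisely so that each rectangle appearing in $\partial D$ for some summand $D$ also appears in $\partial D'$ for exactly one other summand $D'$, so that all contributions cancel modulo $2$. The expected matchings are: one rectangle of each $A_i$ with a rectangle from some $C_i$; one rectangle of each $B_i$ with a rectangle from some $D_i$; the remaining rectangles of the $C_i$'s and $D_i$'s with rectangles from the $E_i$'s and the chains $F_{i,j}, G_{i,j}$; and consecutive $F_{i,j}$'s (respectively $G_{i,j}$'s) sharing rectangles with one another. I would organize the case analysis by grouping rectangles according to their source generator, since each relevant intermediate generator ought to be shared between exactly two summands of $U$.

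The main obstacle will be the sheer size of the bookkeeping, together with handling the edge cases: small $i$, the endpoints of the $F$- and $G$-chains, and the junctions where those chains meet the $E_i$'s or the $C_i, D_i$'s. Once the pairing is correctly set up, each individual check reduces to verifying that two rectangles with explicit coordinates coincide as elements of $\CD_1$. Figure $\ref{fig2}$ exhibits the full pattern for $n = 6$, from which the general pattern is transparent; the grid size $n$ enters the families $A, B, C, D, E, F, G$ uniformly, so once all cancellations are verified in the generic interior range, no further induction is required.
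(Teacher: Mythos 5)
Your plan matches the paper's proof of Lemma \ref{lem7}: both expand $\partial U$ into its constituent rectangles and verify that each rectangle lies in the boundary of exactly two summands of $U$, so everything cancels mod $2$. One correction to the pairings you sketch: beyond the patterns you list, the paper's scheme also has consecutive $C_i$'s share a rectangle, consecutive $D_i$'s share a rectangle, the ends of the $F$- and $G$-chains terminate against $\partial B_i$ and $\partial A_i$ respectively (not only against the $E_i$'s and $C_i, D_i$'s), and $A_0$ pairs with $B_0$ --- so the pairing graph is a bit more intricate than your summary suggests, and should be filled in with the explicit rectangle families as in the paper's proof.
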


\begin{proof} We consider the possible rectangles that appear in $\partial U$, starting with the following rectangles that will be useful to name for the purposes of giving signs later.
\begin{itemize}

\item $R_{1,2}, \dots, R_{1,n - 1}$ where $R_{1,i}$ is the $1 \times i$ rectangle from $[n23\dots (n - i + 1)1(n - i + 2)\dots(n - 1)]$ to $[n23\dots (n - i)1(n - i + 1)\dots(n - 1)]$, and $R_{1,1}$ is the $1 \times 1$ rectangle from $x^{\Id}$ to $[n23\dots(n-1) 1]$.

\item $R_{2,2}, \dots, R_{2,n - 1}$ where $R_{2,i}$ is the $1 \times i$ rectangle from $[12\dots (n - i)n(n - i + 1) \dots(n - 1)]$ to $[12 \dots (n - i - 1)n(n - i) \dots (n - 1)]$, and $R_{2,1}$ is the $1 \times 1$ rectangle from $x^{\Id}$ to $[12...(n-2)n(n-1)]$

\item $R_{3,2}, \dots, R_{3,n - 1}$ where $R_{3,i}$ is the $i \times 1$ rectangle from $[(n - i + 1)23\dots (n - i)(n - i + 2)\dots n1]$ to $[(n - i)23\dots (n - i - 1)(n - i + 1)\dots n1]$, and $R_{3,1} = R_{1,1}$.

\item $R_{4,2}, \dots, R_{4,n - 1}$ where $R_{4,i}$ is the $i \times 1$ rectangle from $[12 \dots (n - i)(n - i + 2) \dots n (n - i + 1)]$ to $[12 \dots (n - i - 1)(n - i + 1) \dots n (n - i)]$, and $R_{4,1} = R_{2,1}$.

\item $R_{5, 1}, \dots, R_{5, n-2}$, where $R_{5, i}$ is the $1 \times i$ rectangle from $[12\dots (n - i - 2)(n - i)n(n - i + 1)\dots (n - 1)(n - i - 1)]$ to $[12\dots (n - i - 2)n(n - i)\dots (n - 1)(n - i - 1)]$

\item $R_{6, 1}, \dots, R_{6, n-2}$, where $R_{6, i}$ is the $i \times 1$ rectangle from $[12\dots (n - i - 2)n(n - i - 1)(n - i + 1)\dots (n - 1)(n - i)]$ to $[12\dots (n - i - 2)n(n - i)\dots (n - 1)(n - i - 1)]$

\end{itemize}

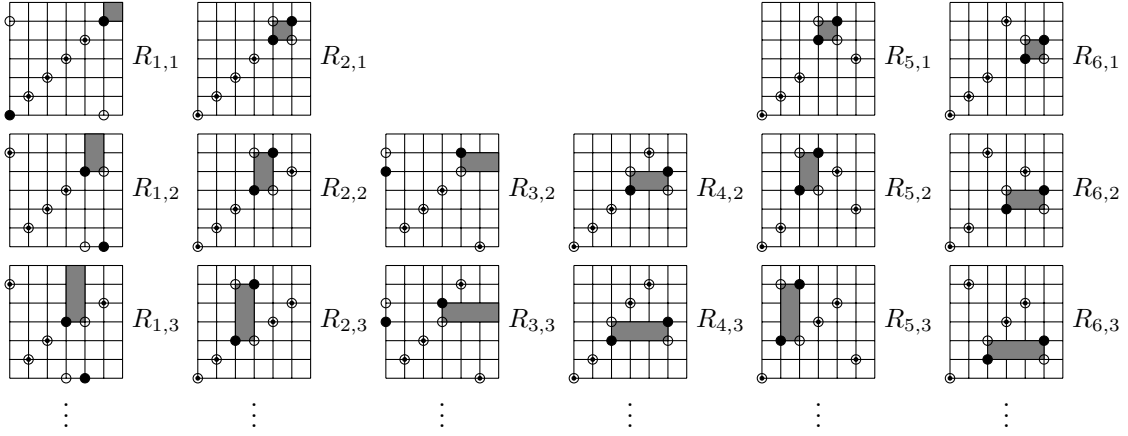
\begin{figure}\begin{tikzpicture}[scale=0.25]

\foreach \i in {0,1,2,3,4,5}{
    \foreach \j in {1,2}{

      \begin{scope}[xshift=10*\i cm,yshift=-7*\j cm] 

        \draw (0,0) grid (6,6);
        
      \end{scope}

}

\node[anchor=north] at (3 + 10*\i, -14) {\vdots};

}

    \begin{scope}[xshift=0 cm,yshift=0 cm] 

        \draw (0,0) grid (6,6);
        
      \end{scope}

    \begin{scope}[xshift=10 cm,yshift=0 cm] 

        \draw (0,0) grid (6,6);
        
      \end{scope}

    \begin{scope}[xshift=40 cm,yshift=0 cm] 

        \draw (0,0) grid (6,6);
        
      \end{scope}
    
    \begin{scope}[xshift=50 cm,yshift=0 cm] 

        \draw (0,0) grid (6,6);
        
      \end{scope}

  \begin{scope}[xshift=0cm, yshift=0 cm]
    \node[anchor=west] at (6,3) {$R_{1,1}$};

    \draw[fill=gray] (5,5) rectangle (6,6);

    \foreach \a/\b in {1/1, 2/2, 3/3, 4/4}{

      \filldraw[black] (\a,\b) circle (0.1);
      \draw (\a,\b) circle (0.25);

    }

    \foreach \a/\b in {0/0, 5/5}{

      \filldraw[black] (\a,\b) circle (0.25);

    }

    \foreach \a/\b in {0/5, 5/0}{

      \draw (\a,\b) circle (0.25);

    }
    
  \end{scope}

  \begin{scope}[xshift=0cm, yshift=-7 cm]
    \node[anchor=west] at (6,3) {$R_{1,2}$};

    \draw[fill=gray] (4,4) rectangle (5,6);

    \foreach \a/\b in {0/5,1/1,2/2,3/3}{

      \filldraw[black] (\a,\b) circle (0.1);
      \draw (\a,\b) circle (0.25);

    }

    \foreach \a/\b in {4/4, 5/0}{

      \filldraw[black] (\a,\b) circle (0.25);

    }

    \foreach \a/\b in {4/0, 5/4}{

      \draw (\a,\b) circle (0.25);

    }
    
  \end{scope}

  \begin{scope}[xshift=0cm, yshift=-14 cm]
    \node[anchor=west] at (6,3) {$R_{1,3}$};

    \draw[fill=gray] (3,3) rectangle (4,6);

    \foreach \a/\b in {0/5, 1/1, 2/2, 5/4}{

      \filldraw[black] (\a,\b) circle (0.1);
      \draw (\a,\b) circle (0.25);

    }

    \foreach \a/\b in {3/3, 4/0}{

      \filldraw[black] (\a,\b) circle (0.25);

    }

    \foreach \a/\b in {3/0, 4/3}{

      \draw (\a,\b) circle (0.25);

    }
    
  \end{scope}

    \begin{scope}[xshift=10cm, yshift=0 cm]
    \node[anchor=west] at (6,3) {$R_{2,1}$};

    \draw[fill=gray] (4,4) rectangle (5,5);

    \foreach \a/\b in {0/0, 1/1, 2/2, 3/3}{

      \filldraw[black] (\a,\b) circle (0.1);
      \draw (\a,\b) circle (0.25);

    }

    \foreach \a/\b in {4/4, 5/5}{

      \filldraw[black] (\a,\b) circle (0.25);

    }

    \foreach \a/\b in {4/5, 5/4}{

      \draw (\a,\b) circle (0.25);

    }
    
  \end{scope}

  \begin{scope}[xshift=10cm, yshift=-7 cm]
    \node[anchor=west] at (6,3) {$R_{2,2}$};

    \draw[fill=gray] (3,3) rectangle (4,5);

    \foreach \a/\b in {0/0,1/1,2/2,5/4}{

      \filldraw[black] (\a,\b) circle (0.1);
      \draw (\a,\b) circle (0.25);

    }

    \foreach \a/\b in {3/3, 4/5}{

      \filldraw[black] (\a,\b) circle (0.25);

    }

    \foreach \a/\b in {3/5, 4/3}{

      \draw (\a,\b) circle (0.25);

    }
    
  \end{scope}

  \begin{scope}[xshift=10cm, yshift=-14 cm]
    \node[anchor=west] at (6,3) {$R_{2,3}$};

    \draw[fill=gray] (2,2) rectangle (3,5);

    \foreach \a/\b in {0/0, 1/1, 4/3, 5/4}{

      \filldraw[black] (\a,\b) circle (0.1);
      \draw (\a,\b) circle (0.25);

    }

    \foreach \a/\b in {2/2, 3/5}{

      \filldraw[black] (\a,\b) circle (0.25);

    }

    \foreach \a/\b in {2/5, 3/2}{

      \draw (\a,\b) circle (0.25);

    }
    
  \end{scope}

  \begin{scope}[xshift=20cm, yshift=-7 cm]
    \node[anchor=west] at (6,3) {$R_{3,2}$};

    \draw[fill=gray] (4,4) rectangle (6,5);

    \foreach \a/\b in {1/1,2/2,3/3,5/0}{

      \filldraw[black] (\a,\b) circle (0.1);
      \draw (\a,\b) circle (0.25);

    }

    \foreach \a/\b in {0/4,4/5}{

      \filldraw[black] (\a,\b) circle (0.25);

    }

    \foreach \a/\b in {0/5,4/4}{

      \draw (\a,\b) circle (0.25);

    }
    
  \end{scope}

  \begin{scope}[xshift=20cm, yshift=-14 cm]
    \node[anchor=west] at (6,3) {$R_{3,3}$};

    \draw[fill=gray] (3,3) rectangle (6,4);

    \foreach \a/\b in {1/1, 2/2, 4/5, 5/0}{

      \filldraw[black] (\a,\b) circle (0.1);
      \draw (\a,\b) circle (0.25);

    }

    \foreach \a/\b in {0/3, 3/4}{

      \filldraw[black] (\a,\b) circle (0.25);

    }

    \foreach \a/\b in {0/4, 3/3}{

      \draw (\a,\b) circle (0.25);

    }
    
  \end{scope}

  \begin{scope}[xshift=30cm, yshift=-7 cm]
    \node[anchor=west] at (6,3) {$R_{4,2}$};

    \draw[fill=gray] (3,3) rectangle (5,4);

    \foreach \a/\b in {0/0,1/1,2/2,4/5}{

      \filldraw[black] (\a,\b) circle (0.1);
      \draw (\a,\b) circle (0.25);

    }

    \foreach \a/\b in {3/3, 5/4}{

      \filldraw[black] (\a,\b) circle (0.25);

    }

    \foreach \a/\b in {3/4, 5/3}{

      \draw (\a,\b) circle (0.25);

    }
    
  \end{scope}

  \begin{scope}[xshift=30cm, yshift=-14 cm]
    \node[anchor=west] at (6,3) {$R_{4,3}$};

    \draw[fill=gray] (2,2) rectangle (5,3);

    \foreach \a/\b in {0/0, 1/1, 3/4, 4/5}{

      \filldraw[black] (\a,\b) circle (0.1);
      \draw (\a,\b) circle (0.25);

    }

    \foreach \a/\b in {2/2, 5/3}{

      \filldraw[black] (\a,\b) circle (0.25);

    }

    \foreach \a/\b in {2/3, 5/2}{

      \draw (\a,\b) circle (0.25);

    }
    
  \end{scope}

  \begin{scope}[xshift=40cm, yshift=0 cm]
    \node[anchor=west] at (6,3) {$R_{5,1}$};

    \draw[fill=gray] (3,4) rectangle (4,5);

    \foreach \a/\b in {0/0, 1/1, 2/2, 5/3}{

      \filldraw[black] (\a,\b) circle (0.1);
      \draw (\a,\b) circle (0.25);

    }

    \foreach \a/\b in {3/4, 4/5}{

      \filldraw[black] (\a,\b) circle (0.25);

    }

    \foreach \a/\b in {4/4, 3/5}{

      \draw (\a,\b) circle (0.25);

    }
    
  \end{scope}

  \begin{scope}[xshift=40cm, yshift=-7 cm]
    \node[anchor=west] at (6,3) {$R_{5,2}$};

    \draw[fill=gray] (2,3) rectangle (3,5);

    \foreach \a/\b in {0/0,1/1,5/2,4/4}{

      \filldraw[black] (\a,\b) circle (0.1);
      \draw (\a,\b) circle (0.25);

    }

    \foreach \a/\b in {2/3, 3/5}{

      \filldraw[black] (\a,\b) circle (0.25);

    }

    \foreach \a/\b in {3/3, 2/5}{

      \draw (\a,\b) circle (0.25);

    }
    
  \end{scope}

  \begin{scope}[xshift=40cm, yshift=-14 cm]
    \node[anchor=west] at (6,3) {$R_{5,3}$};

    \draw[fill=gray] (1,2) rectangle (2,5);

    \foreach \a/\b in {0/0, 5/1, 3/3, 4/4}{

      \filldraw[black] (\a,\b) circle (0.1);
      \draw (\a,\b) circle (0.25);

    }

    \foreach \a/\b in {1/2, 2/5}{

      \filldraw[black] (\a,\b) circle (0.25);

    }

    \foreach \a/\b in {2/2, 1/5}{

      \draw (\a,\b) circle (0.25);

    }
    
  \end{scope}

  \begin{scope}[xshift=50cm, yshift=0 cm]
    \node[anchor=west] at (6,3) {$R_{6,1}$};

    \draw[fill=gray] (4,3) rectangle (5,4);

    \foreach \a/\b in {0/0, 1/1, 2/2, 3/5}{

      \filldraw[black] (\a,\b) circle (0.1);
      \draw (\a,\b) circle (0.25);

    }

    \foreach \a/\b in {4/3, 5/4}{

      \filldraw[black] (\a,\b) circle (0.25);

    }

    \foreach \a/\b in {4/4, 5/3}{

      \draw (\a,\b) circle (0.25);

    }
    
  \end{scope}

  \begin{scope}[xshift=50cm, yshift=-7 cm]
    \node[anchor=west] at (6,3) {$R_{6,2}$};

    \draw[fill=gray] (3,2) rectangle (5,3);

    \foreach \a/\b in {0/0,1/1,2/5,4/4}{

      \filldraw[black] (\a,\b) circle (0.1);
      \draw (\a,\b) circle (0.25);

    }

    \foreach \a/\b in {3/2, 5/3}{

      \filldraw[black] (\a,\b) circle (0.25);

    }

    \foreach \a/\b in {3/3, 5/2}{

      \draw (\a,\b) circle (0.25);

    }
    
  \end{scope}

  \begin{scope}[xshift=50cm, yshift=-14 cm]
    \node[anchor=west] at (6,3) {$R_{6,3}$};

    \draw[fill=gray] (2,1) rectangle (5,2);

    \foreach \a/\b in {0/0, 1/5, 3/3, 4/4}{

      \filldraw[black] (\a,\b) circle (0.1);
      \draw (\a,\b) circle (0.25);

    }

    \foreach \a/\b in {2/1, 5/2}{

      \filldraw[black] (\a,\b) circle (0.25);

    }

    \foreach \a/\b in {2/2, 5/1}{

      \draw (\a,\b) circle (0.25);

    }
    
  \end{scope}

\end{tikzpicture}\caption{The rectangles $R_{1,i}, R_{2,i}, R_{3,i}, R_{4,i}, R_{5, i}, R_{6, i}$, where each rectangle is drawn from a generator $x$ (drawn by \textbullet) to a generator $y$ (drawn by $\circ$.)}\label{fig5}\end{figure}

(See Figure $\ref{fig5}$.) We cancel each of these rectangles in the boundary as follows: 

\begin{itemize}

\item $R_{1, 1}$ occurs in $\partial U$ twice, from $\partial A_0$ and $\partial B_0$, so it cancels in $\partial U$. For $i = 2, \dots n - 1$, $R_{1, i}$ occurs in $\partial A_{i - 1}$ and $\partial C_{i - 1}$, so they also cancel in $\partial U$.

\item $R_{2, 1}$ occurs in $\partial U$ twice, from $\partial C_1$ and $\partial D_1$, so it cancels in $\partial U$, $R_{2, n - 1}$ occurs in $\partial A_{n - 1}$ and $\partial E_{n - 2}$, and for $i = 2, \dots n - 2$, $R_{2, i}$ occurs in $\partial C_{i}$ and $\partial E_{i}$, so they also cancel in $\partial U$.

\item For $i = 2, \dots n - 1$, $R_{3, i}$ occurs in $\partial B_{i - 1}$ and $\partial D_{i - 1}$.

\item For $i = 2, \dots n - 2$, $R_{4, i}$ occurs in $\partial D_{i}$ and $\partial E_{i - 1}$.

\item $R_{5, 1}$ occurs in $\partial E_{1}$ and $\partial F_{1, 1}$, and for $i = 2, \dots n - 2$, $R_{5, i}$ occurs in $\partial E_{i}$ and $\partial F_{1, i - 1}$.

\item $R_{6, 1}$ occurs in $\partial E_{1}$ and $\partial G_{1, 1}$, and for $i = 2, \dots n - 2$, $R_{6, i}$ occurs in $\partial E_{i}$ and $\partial G_{1, i - 1}$.

\end{itemize}

Next, we consider the following rectangles: 

\begin{itemize}

\item $P_{i, 1} \dots P_{i, n-i-1}$ for each $i = 2 \dots n - 2$, where $P_{i, j}$ is the $1 \times j$ rectangle from $[12\dots (n-i-j-1)(n-i-j+1)(n-i+1)(n-i-j+2)\dots (n-i) (n-i+2) \dots n (n-i-j)]$ to $[12\dots (n-i-j-1)(n-i+1)(n-i-j+1)\dots (n-i) (n-i+2) \dots n (n-i-j)]$, and $P_{1, j} = R_{5, j}$ for each $j = 1, \dots n - 2$.

\item $Q_{i, 1} \dots Q_{i, n-i-1}$ for each $i = 2 \dots n - 2$, where $Q_{i, j}$ is the $j \times 1$ rectangle from $[12\dots (n-i-j-1)n(n-i-j)(n-i-j+2)\dots (n-i)(n-i-j+1)(n-i+1) \dots (n-1)]$ to $[12\dots (n-i-j-1)n(n-i-j+1)\dots (n-i)(n-i-j)(n-i+1) \dots (n-1)]$, and $Q_{1, j} = R_{6, j}$ for each $j = 1, \dots n - 2$.

\end{itemize}

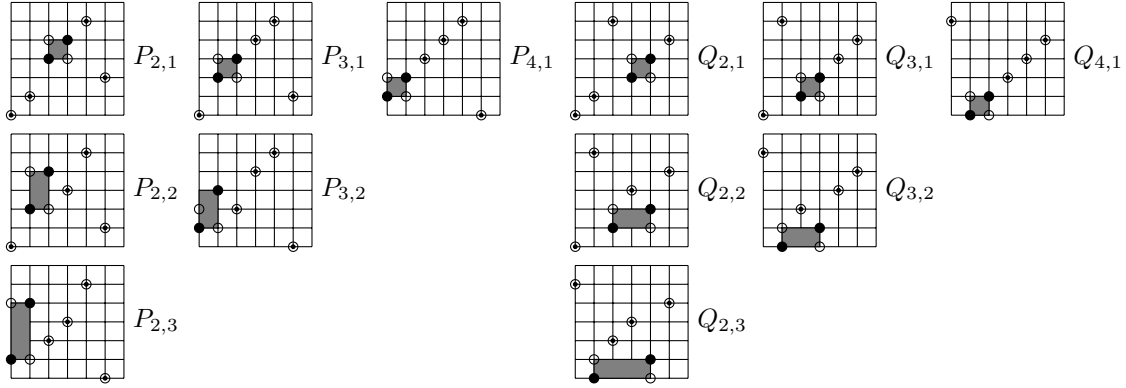
\begin{figure}\begin{tikzpicture}[scale=0.25]

  \begin{scope}[xshift=0cm,yshift=0cm] 

        \draw (0, 0) grid (6, 6);
        \node[anchor=west] at (6, 3){$P_{2, 1}$};
        
      \end{scope}

\begin{scope}[xshift=0cm,yshift=-7cm] 

       \draw (0, 0) grid (6, 6);
       \node[anchor=west] at (6, 3){$P_{2, 2}$};
        
      \end{scope}

\begin{scope}[xshift=0cm,yshift=-14cm] 

      \draw (0, 0) grid (6, 6);
      \node[anchor=west] at (6, 3){$P_{2, 3}$};
        
      \end{scope}

 \begin{scope}[xshift=0cm, yshift=0cm]

    \draw[fill=gray] (2, 3) rectangle (3, 4);

    \foreach \a/\b in {0/0, 1/1, 4/5, 5/2}{

      \filldraw[black] (\a,\b) circle (0.1);
      \draw (\a,\b) circle (0.25);

    }

    \foreach \a/\b in {2/3, 3/4}{

      \filldraw[black] (\a,\b) circle (0.25);

    }

    \foreach \a/\b in {2/4, 3/3}{

      \draw (\a,\b) circle (0.25);

    }
    
  \end{scope}

  \begin{scope}[xshift=0cm, yshift=-7cm]

    \draw[fill=gray] (1, 2) rectangle (2, 4);

    \foreach \a/\b in {0/0, 3/3, 4/5, 5/1}{

      \filldraw[black] (\a,\b) circle (0.1);
      \draw (\a,\b) circle (0.25);

    }

    \foreach \a/\b in {1/2, 2/4}{

      \filldraw[black] (\a,\b) circle (0.25);

    }

    \foreach \a/\b in {1/4, 2/2}{

      \draw (\a,\b) circle (0.25);

    }
    
  \end{scope}

  \begin{scope}[xshift=0cm, yshift=-14cm]

    \draw[fill=gray] (0, 1) rectangle (1, 4);

    \foreach \a/\b in {2/2, 3/3, 4/5, 5/0}{

      \filldraw[black] (\a,\b) circle (0.1);
      \draw (\a,\b) circle (0.25);

    }

    \foreach \a/\b in {0/1, 1/4}{

      \filldraw[black] (\a,\b) circle (0.25);

    }

    \foreach \a/\b in {0/4, 1/1}{

      \draw (\a,\b) circle (0.25);

    }
    
  \end{scope}

  \begin{scope}[xshift=10cm,yshift=0cm] 

        \draw (0, 0) grid (6, 6);
        \node[anchor=west] at (6, 3){$P_{3, 1}$};
        
      \end{scope}

\begin{scope}[xshift=10cm,yshift=-7cm] 

       \draw (0, 0) grid (6, 6);
       \node[anchor=west] at (6, 3){$P_{3, 2}$};
        
      \end{scope}

 \begin{scope}[xshift=10cm, yshift=0cm]

    \draw[fill=gray] (1, 2) rectangle (2, 3);

    \foreach \a/\b in {0/0, 3/4, 4/5, 5/1}{

      \filldraw[black] (\a,\b) circle (0.1);
      \draw (\a,\b) circle (0.25);

    }

    \foreach \a/\b in {1/2, 2/3}{

      \filldraw[black] (\a,\b) circle (0.25);

    }

    \foreach \a/\b in {1/3, 2/2}{

      \draw (\a,\b) circle (0.25);

    }
    
  \end{scope}

  \begin{scope}[xshift=10cm, yshift=-7cm]

    \draw[fill=gray] (0, 1) rectangle (1, 3);

    \foreach \a/\b in {2/2, 3/4, 4/5, 5/0}{

      \filldraw[black] (\a,\b) circle (0.1);
      \draw (\a,\b) circle (0.25);

    }

    \foreach \a/\b in {0/1, 1/3}{

      \filldraw[black] (\a,\b) circle (0.25);

    }

    \foreach \a/\b in {0/2, 1/1}{

      \draw (\a,\b) circle (0.25);

    }
    
  \end{scope}

  \begin{scope}[xshift=20cm,yshift=0cm] 

        \draw (0, 0) grid (6, 6);
        \node[anchor=west] at (6, 3){$P_{4, 1}$};
        
      \end{scope}

 \begin{scope}[xshift=20cm, yshift=0cm]

    \draw[fill=gray] (0, 1) rectangle (1, 2);

    \foreach \a/\b in {2/3, 3/4, 4/5, 5/0}{

      \filldraw[black] (\a,\b) circle (0.1);
      \draw (\a,\b) circle (0.25);

    }

    \foreach \a/\b in {0/1, 1/2}{

      \filldraw[black] (\a,\b) circle (0.25);

    }

    \foreach \a/\b in {0/2, 1/1}{

      \draw (\a,\b) circle (0.25);

    }
    
  \end{scope}

\begin{scope}[xshift=30cm,yshift=0cm] 

        \draw (0, 0) grid (6, 6);
        \node[anchor=west] at (6, 3){$Q_{2, 1}$};
        
      \end{scope}

\begin{scope}[xshift=30cm,yshift=-7cm] 

       \draw (0, 0) grid (6, 6);
       \node[anchor=west] at (6, 3){$Q_{2, 2}$};
        
      \end{scope}

\begin{scope}[xshift=30cm,yshift=-14cm] 

      \draw (0, 0) grid (6, 6);
      \node[anchor=west] at (6, 3){$Q_{2, 3}$};
        
      \end{scope}

 \begin{scope}[xshift=30cm, yshift=0cm]

    \draw[fill=gray] (3, 2) rectangle (4, 3);

    \foreach \a/\b in {0/0, 1/1, 5/4, 2/5}{

      \filldraw[black] (\a,\b) circle (0.1);
      \draw (\a,\b) circle (0.25);

    }

    \foreach \a/\b in {3/2, 4/3}{

      \filldraw[black] (\a,\b) circle (0.25);

    }

    \foreach \a/\b in {4/2, 3/3}{

      \draw (\a,\b) circle (0.25);

    }
    
  \end{scope}

  \begin{scope}[xshift=30cm, yshift=-7cm]

    \draw[fill=gray] (2, 1) rectangle (4, 2);

    \foreach \a/\b in {0/0, 3/3, 5/4, 1/5}{

      \filldraw[black] (\a,\b) circle (0.1);
      \draw (\a,\b) circle (0.25);

    }

    \foreach \a/\b in {2/1, 4/2}{

      \filldraw[black] (\a,\b) circle (0.25);

    }

    \foreach \a/\b in {4/1, 2/2}{

      \draw (\a,\b) circle (0.25);

    }
    
  \end{scope}

  \begin{scope}[xshift=30cm, yshift=-14cm]

    \draw[fill=gray] (1, 0) rectangle (4, 1);

    \foreach \a/\b in {2/2, 3/3, 5/4, 0/5}{

      \filldraw[black] (\a,\b) circle (0.1);
      \draw (\a,\b) circle (0.25);

    }

    \foreach \a/\b in {1/0, 4/1}{

      \filldraw[black] (\a,\b) circle (0.25);

    }

    \foreach \a/\b in {4/0, 1/1}{

      \draw (\a,\b) circle (0.25);

    }
    
  \end{scope}

  \begin{scope}[xshift=40cm,yshift=0cm] 

        \draw (0, 0) grid (6, 6);
        \node[anchor=west] at (6, 3){$Q_{3, 1}$};
        
      \end{scope}

\begin{scope}[xshift=40cm,yshift=-7cm] 

       \draw (0, 0) grid (6, 6);
       \node[anchor=west] at (6, 3){$Q_{3, 2}$};
        
      \end{scope}

 \begin{scope}[xshift=40cm, yshift=0cm]

    \draw[fill=gray] (2, 1) rectangle (3, 2);

    \foreach \a/\b in {0/0, 4/3, 5/4, 1/5}{

      \filldraw[black] (\a,\b) circle (0.1);
      \draw (\a,\b) circle (0.25);

    }

    \foreach \a/\b in {2/1, 3/2}{

      \filldraw[black] (\a,\b) circle (0.25);

    }

    \foreach \a/\b in {3/1, 2/2}{

      \draw (\a,\b) circle (0.25);

    }
    
  \end{scope}

  \begin{scope}[xshift=40cm, yshift=-7cm]

    \draw[fill=gray] (1, 0) rectangle (3, 1);

    \foreach \a/\b in {2/2, 4/3, 5/4, 0/5}{

      \filldraw[black] (\a,\b) circle (0.1);
      \draw (\a,\b) circle (0.25);

    }

    \foreach \a/\b in {1/0, 3/1}{

      \filldraw[black] (\a,\b) circle (0.25);

    }

    \foreach \a/\b in {3/0, 1/1}{

      \draw (\a,\b) circle (0.25);

    }
    
  \end{scope}

  \begin{scope}[xshift=50cm,yshift=0cm] 

        \draw (0, 0) grid (6, 6);
        \node[anchor=west] at (6, 3){$Q_{4, 1}$};
        
      \end{scope}

 \begin{scope}[xshift=50cm, yshift=0cm]

    \draw[fill=gray] (1, 0) rectangle (2, 1);

    \foreach \a/\b in {3/2, 4/3, 5/4, 0/5}{

      \filldraw[black] (\a,\b) circle (0.1);
      \draw (\a,\b) circle (0.25);

    }

    \foreach \a/\b in {1/0, 2/1}{

      \filldraw[black] (\a,\b) circle (0.25);

    }

    \foreach \a/\b in {2/0, 1/1}{

      \draw (\a,\b) circle (0.25);

    }
    
  \end{scope}

\end{tikzpicture}\caption{The rectangles $P_{i, j}$ and $Q_{i, j}$ in the special case of a $6 \times 6$ grid, where each domain is drawn from a generator $x$ (drawn by \textbullet) to a generator $y$ (drawn by $\circ$)}. \label{fig4}\end{figure}

(see Figure $\ref{fig4}$) We cancel each of these rectangles in the boundary as follows: 

\begin{itemize}

\item $P_{n-2, 1}$ occurs in $\partial F_{n-3, 1}$ and $\partial B_{n - 2}$. For $i = 2, \dots n - 3$, $P_{i, j}$ occurs in $\partial F_{i - 1, j}$ and either $\partial F_{i, j - 1}$ if $j \geq 2$ or $\partial F_{i, 1}$ if $j = 1$.

\item $Q_{n-2, 1}$ occurs in $\partial G_{n-3, 1}$ and $\partial A_{n - 2}$. For $i = 2, \dots n - 3$, $Q_{i, j}$ occurs in $\partial G_{i - 1, j}$ and either $\partial G_{i, j - 1}$ if $j \geq 2$ or $\partial G_{i, 1}$ if $j = 1$.

\end{itemize}

Finally, the remaining rectangles have the following form:
\begin{itemize}

\item $R_{1, 1}' \dots R_{1, n-1}'$, where $R_{1, i}'$ is the $(n - i) \times 1$ rectangle from $[n23\dots (n-i)1(n-i+1) \dots (n-1)]$ to $[12\dots (n-i)n(n-i+1) \dots (n-1)]$.

\item $R_{2, 1}' \dots R_{2, n-1}'$, where $R_{2, i}'$ is the $1 \times (n - i)$ rectangle from $[(n-i+1)23\dots (n-i)(n-i+2)\dots n1]$ to $[12\dots (n-i)(n-i+2) \dots n(n-i+1)]$.

\item $P_{i, 2}' \dots P_{i, n-i-1}'$ for $i = 1 \dots n - 3$, where $P_{i, j}'$ is the $j \times 1$ rectangle from $[12\dots(n-i-j-1)(n-i)(n-i-j+1) \dots (n-i-1)(n-i+1) \dots n(n-i-j)]$ to $[12\dots(n-i-j-1)(n-i + 1)(n-i-j+1) \dots (n-i)(n-i+2) \dots n(n-i-j)]$, and $P_{i, 1}' = P_{i, 1}$.

\item $Q_{i, 2}' \dots Q_{i, n-i-1}'$ for $i = 1 \dots n - 3$, where $Q_{i, j}'$ is the $1 \times j$ rectangle from $[12\dots (n-i-j-1)n(n-i-j+1)\dots (n-i-1)(n-i-j)(n-i)\dots(n-1)]$ to $[12\dots (n-i-j-1)n(n-i-j+1)\dots (n-i)(n-i-j)(n-i+1)\dots(n-1)]$, and $Q_{i, 1}' = Q_{i, 1}$.

\end{itemize}

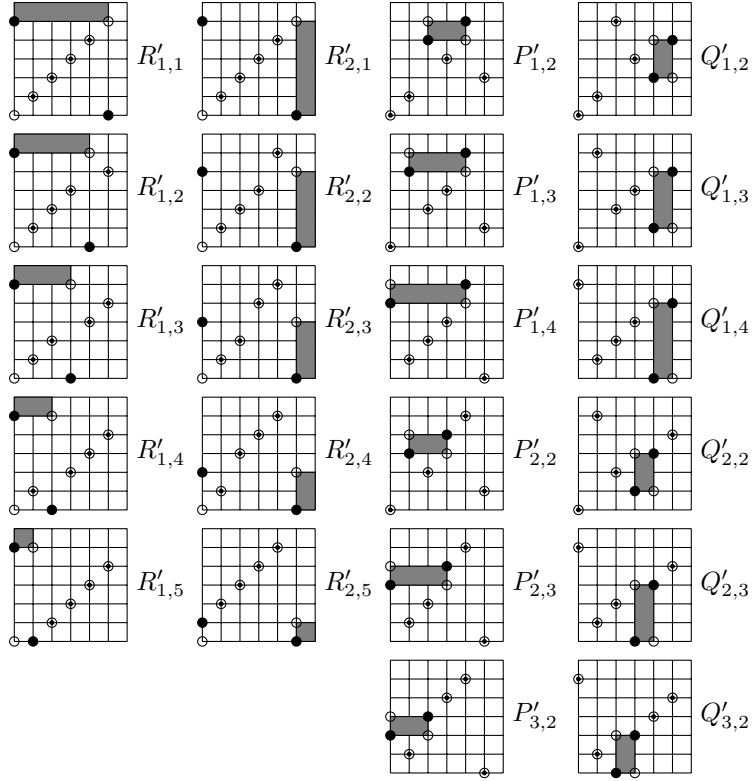
\begin{figure}\begin{tikzpicture}[scale=0.25]

\foreach \i in {1,2,3,4,5}{

      \begin{scope}[xshift=0 cm,yshift=-7*\i cm] 

        \draw (0,0) grid (6,6);
        \node[anchor=west] at (6, 3) {$R_{1, \i}'$};
        
      \end{scope}


}

\begin{scope}[xshift=0cm, yshift=-7 cm]

    \draw[fill=gray] (0,5) rectangle (5,6);

    \foreach \a/\b in {1/1,2/2,3/3,4/4}{

      \filldraw[black] (\a,\b) circle (0.1);
      \draw (\a,\b) circle (0.25);

    }

    \foreach \a/\b in {0/5, 5/0}{

      \filldraw[black] (\a,\b) circle (0.25);

    }

    \foreach \a/\b in {0/0, 5/5}{

      \draw (\a,\b) circle (0.25);

    }
    
  \end{scope}

  \begin{scope}[xshift=0cm, yshift=-14 cm]

    \draw[fill=gray] (0,5) rectangle (4,6);

    \foreach \a/\b in {1/1,2/2,3/3,5/4}{

      \filldraw[black] (\a,\b) circle (0.1);
      \draw (\a,\b) circle (0.25);

    }

    \foreach \a/\b in {0/5, 4/0}{

      \filldraw[black] (\a,\b) circle (0.25);

    }

    \foreach \a/\b in {0/0, 4/5}{

      \draw (\a,\b) circle (0.25);

    }
    
  \end{scope}

  \begin{scope}[xshift=0cm, yshift=-21 cm]

    \draw[fill=gray] (0,5) rectangle (3,6);

    \foreach \a/\b in {1/1,2/2,4/3,5/4}{

      \filldraw[black] (\a,\b) circle (0.1);
      \draw (\a,\b) circle (0.25);

    }

    \foreach \a/\b in {0/5, 3/0}{

      \filldraw[black] (\a,\b) circle (0.25);

    }

    \foreach \a/\b in {0/0, 3/5}{

      \draw (\a,\b) circle (0.25);

    }

    \end{scope}

    \begin{scope}[xshift=0cm, yshift=-28 cm]

    \draw[fill=gray] (0,5) rectangle (2,6);

    \foreach \a/\b in {1/1,3/2,4/3,5/4}{

      \filldraw[black] (\a,\b) circle (0.1);
      \draw (\a,\b) circle (0.25);

    }

    \foreach \a/\b in {0/5, 2/0}{

      \filldraw[black] (\a,\b) circle (0.25);

    }

    \foreach \a/\b in {0/0, 2/5}{

      \draw (\a,\b) circle (0.25);

    }
    
  \end{scope}

  \begin{scope}[xshift=0cm, yshift=-35 cm]

    \draw[fill=gray] (0,5) rectangle (1,6);

    \foreach \a/\b in {2/1,3/2,4/3,5/4}{

      \filldraw[black] (\a,\b) circle (0.1);
      \draw (\a,\b) circle (0.25);

    }

    \foreach \a/\b in {0/5, 1/0}{

      \filldraw[black] (\a,\b) circle (0.25);

    }

    \foreach \a/\b in {0/0, 1/5}{

      \draw (\a,\b) circle (0.25);

    }
    
  \end{scope}

\foreach \i in {1,2,3,4,5}{

      \begin{scope}[xshift=10 cm,yshift=-7*\i cm] 

        \draw (0,0) grid (6,6);
        \node[anchor=west] at (6, 3) {$R_{2, \i}'$};
      \end{scope}

}

\begin{scope}[xshift=10cm, yshift=-7 cm]

    \draw[fill=gray] (5,0) rectangle (6,5);

    \foreach \a/\b in {1/1,2/2,3/3,4/4}{

      \filldraw[black] (\a,\b) circle (0.1);
      \draw (\a,\b) circle (0.25);

    }

    \foreach \a/\b in {0/5, 5/0}{

      \filldraw[black] (\a,\b) circle (0.25);

    }

    \foreach \a/\b in {0/0, 5/5}{

      \draw (\a,\b) circle (0.25);

    }
    
  \end{scope}

  \begin{scope}[xshift=10cm, yshift=-14 cm]

    \draw[fill=gray] (5,0) rectangle (6,4);

    \foreach \a/\b in {1/1,2/2,3/3,4/5}{

      \filldraw[black] (\a,\b) circle (0.1);
      \draw (\a,\b) circle (0.25);

    }

    \foreach \a/\b in {0/4, 5/0}{

      \filldraw[black] (\a,\b) circle (0.25);

    }

    \foreach \a/\b in {0/0, 5/4}{

      \draw (\a,\b) circle (0.25);

    }
    
  \end{scope}

  \begin{scope}[xshift=10cm, yshift=-21 cm]

    \draw[fill=gray] (5,0) rectangle (6,3);

    \foreach \a/\b in {1/1,2/2,3/4,4/5}{

      \filldraw[black] (\a,\b) circle (0.1);
      \draw (\a,\b) circle (0.25);

    }

    \foreach \a/\b in {0/3, 5/0}{

      \filldraw[black] (\a,\b) circle (0.25);

    }

    \foreach \a/\b in {0/0, 5/3}{

      \draw (\a,\b) circle (0.25);

    }

    \end{scope}

    \begin{scope}[xshift=10cm, yshift=-28 cm]

    \draw[fill=gray] (5,0) rectangle (6,2);

    \foreach \a/\b in {1/1,2/3,3/4,4/5}{

      \filldraw[black] (\a,\b) circle (0.1);
      \draw (\a,\b) circle (0.25);

    }

    \foreach \a/\b in {0/2, 5/0}{

      \filldraw[black] (\a,\b) circle (0.25);

    }

    \foreach \a/\b in {0/0, 5/2}{

      \draw (\a,\b) circle (0.25);

    }
    
  \end{scope}

  \begin{scope}[xshift=10cm, yshift=-35 cm]

    \draw[fill=gray] (5,0) rectangle (6,1);

    \foreach \a/\b in {1/2,2/3,3/4,4/5}{

      \filldraw[black] (\a,\b) circle (0.1);
      \draw (\a,\b) circle (0.25);

    }

    \foreach \a/\b in {0/1, 5/0}{

      \filldraw[black] (\a,\b) circle (0.25);

    }

    \foreach \a/\b in {0/0, 5/1}{

      \draw (\a,\b) circle (0.25);

    }
    
  \end{scope}

  \foreach \i in {1,2,3,4,5,6}{

      \begin{scope}[xshift=20 cm,yshift=-7*\i cm] 

        \draw (0,0) grid (6,6);
        
      \end{scope}


}

\begin{scope}[xshift=20cm, yshift=-7 cm]

    \node[anchor=west] at (6, 3) {$P_{1, 2}'$};
    \draw[fill=gray] (2,4) rectangle (4,5);

    \foreach \a/\b in {0/0,1/1,3/3,5/2}{

      \filldraw[black] (\a,\b) circle (0.1);
      \draw (\a,\b) circle (0.25);

    }

    \foreach \a/\b in {2/4, 4/5}{

      \filldraw[black] (\a,\b) circle (0.25);

    }

    \foreach \a/\b in {2/5, 4/4}{

      \draw (\a,\b) circle (0.25);

    }
    
  \end{scope}

\begin{scope}[xshift=20cm, yshift=-14 cm]

    \node[anchor=west] at (6, 3) {$P_{1, 3}'$};
    \draw[fill=gray] (1,4) rectangle (4,5);

    \foreach \a/\b in {0/0,2/2,3/3,5/1}{

      \filldraw[black] (\a,\b) circle (0.1);
      \draw (\a,\b) circle (0.25);

    }

    \foreach \a/\b in {1/4, 4/5}{

      \filldraw[black] (\a,\b) circle (0.25);

    }

    \foreach \a/\b in {1/5, 4/4}{

      \draw (\a,\b) circle (0.25);

    }
    
  \end{scope}

\begin{scope}[xshift=20cm, yshift=-21 cm]

    \node[anchor=west] at (6, 3) {$P_{1, 4}'$};
    \draw[fill=gray] (0,4) rectangle (4,5);

    \foreach \a/\b in {1/1,2/2,3/3,5/0}{

      \filldraw[black] (\a,\b) circle (0.1);
      \draw (\a,\b) circle (0.25);

    }

    \foreach \a/\b in {0/4, 4/5}{

      \filldraw[black] (\a,\b) circle (0.25);

    }

    \foreach \a/\b in {0/5, 4/4}{

      \draw (\a,\b) circle (0.25);

    }
    
  \end{scope}

  \begin{scope}[xshift=20cm, yshift=-28 cm]

    \node[anchor=west] at (6, 3) {$P_{2, 2}'$};
    \draw[fill=gray] (1,3) rectangle (3,4);

    \foreach \a/\b in {0/0,2/2,4/5,5/1}{

      \filldraw[black] (\a,\b) circle (0.1);
      \draw (\a,\b) circle (0.25);

    }

    \foreach \a/\b in {1/3, 3/4}{

      \filldraw[black] (\a,\b) circle (0.25);

    }

    \foreach \a/\b in {1/4, 3/3}{

      \draw (\a,\b) circle (0.25);

    }
    
  \end{scope}

  \begin{scope}[xshift=20cm, yshift=-35 cm]

    \node[anchor=west] at (6, 3) {$P_{2, 3}'$};
    \draw[fill=gray] (0,3) rectangle (3,4);

    \foreach \a/\b in {1/1,2/2,4/5,5/0}{

      \filldraw[black] (\a,\b) circle (0.1);
      \draw (\a,\b) circle (0.25);

    }

    \foreach \a/\b in {0/3, 3/4}{

      \filldraw[black] (\a,\b) circle (0.25);

    }

    \foreach \a/\b in {0/4, 3/3}{

      \draw (\a,\b) circle (0.25);

    }
    
  \end{scope}

  \begin{scope}[xshift=20cm, yshift=-42 cm]

    \node[anchor=west] at (6, 3) {$P_{3, 2}'$};
    \draw[fill=gray] (0,2) rectangle (2,3);

    \foreach \a/\b in {1/1,3/4,4/5,5/0}{

      \filldraw[black] (\a,\b) circle (0.1);
      \draw (\a,\b) circle (0.25);

    }

    \foreach \a/\b in {0/2, 2/3}{

      \filldraw[black] (\a,\b) circle (0.25);

    }

    \foreach \a/\b in {0/3, 2/2}{

      \draw (\a,\b) circle (0.25);

    }

    \end{scope}

\foreach \i in {1,2,3,4,5,6}{

      \begin{scope}[xshift=30 cm,yshift=-7*\i cm] 

        \draw (0,0) grid (6,6);
        
      \end{scope}


}

\begin{scope}[xshift=30cm, yshift=-7 cm]

    \node[anchor=west] at (6, 3) {$Q_{1, 2}'$};
    \draw[fill=gray] (4,2) rectangle (5,4);

    \foreach \a/\b in {0/0,1/1,3/3,2/5}{

      \filldraw[black] (\a,\b) circle (0.1);
      \draw (\a,\b) circle (0.25);

    }

    \foreach \a/\b in {4/2, 5/4}{

      \filldraw[black] (\a,\b) circle (0.25);

    }

    \foreach \a/\b in {5/2, 4/4}{

      \draw (\a,\b) circle (0.25);

    }
    
  \end{scope}

\begin{scope}[xshift=30cm, yshift=-14 cm]

    \node[anchor=west] at (6, 3) {$Q_{1, 3}'$};
    \draw[fill=gray] (4,1) rectangle (5,4);

    \foreach \a/\b in {0/0,2/2,3/3,1/5}{

      \filldraw[black] (\a,\b) circle (0.1);
      \draw (\a,\b) circle (0.25);

    }

    \foreach \a/\b in {4/1, 5/4}{

      \filldraw[black] (\a,\b) circle (0.25);

    }

    \foreach \a/\b in {5/1, 4/4}{

      \draw (\a,\b) circle (0.25);

    }
    
  \end{scope}

\begin{scope}[xshift=30cm, yshift=-21 cm]

    \node[anchor=west] at (6, 3) {$Q_{1, 4}'$};
    \draw[fill=gray] (4,0) rectangle (5,4);

    \foreach \a/\b in {1/1,2/2,3/3,0/5}{

      \filldraw[black] (\a,\b) circle (0.1);
      \draw (\a,\b) circle (0.25);

    }

    \foreach \a/\b in {4/0, 5/4}{

      \filldraw[black] (\a,\b) circle (0.25);

    }

    \foreach \a/\b in {5/0, 4/4}{

      \draw (\a,\b) circle (0.25);

    }
    
  \end{scope}

  \begin{scope}[xshift=30cm, yshift=-28 cm]

    \node[anchor=west] at (6, 3) {$Q_{2, 2}'$};
    \draw[fill=gray] (3,1) rectangle (4,3);

    \foreach \a/\b in {0/0,2/2,5/4,1/5}{

      \filldraw[black] (\a,\b) circle (0.1);
      \draw (\a,\b) circle (0.25);

    }

    \foreach \a/\b in {3/1, 4/3}{

      \filldraw[black] (\a,\b) circle (0.25);

    }

    \foreach \a/\b in {4/1, 3/3}{

      \draw (\a,\b) circle (0.25);

    }
    
  \end{scope}

  \begin{scope}[xshift=30cm, yshift=-35 cm]

    \node[anchor=west] at (6, 3) {$Q_{2, 3}'$};
    \draw[fill=gray] (3,0) rectangle (4,3);

    \foreach \a/\b in {1/1,2/2,5/4,0/5}{

      \filldraw[black] (\a,\b) circle (0.1);
      \draw (\a,\b) circle (0.25);

    }

    \foreach \a/\b in {3/0, 4/3}{

      \filldraw[black] (\a,\b) circle (0.25);

    }

    \foreach \a/\b in {4/0, 3/3}{

      \draw (\a,\b) circle (0.25);

    }
    
  \end{scope}

  \begin{scope}[xshift=30cm, yshift=-42 cm]

    \node[anchor=west] at (6, 3) {$Q_{3, 2}'$};
    \draw[fill=gray] (2,0) rectangle (3,2);

    \foreach \a/\b in {1/1,4/3,5/4,0/5}{

      \filldraw[black] (\a,\b) circle (0.1);
      \draw (\a,\b) circle (0.25);

    }

    \foreach \a/\b in {2/0, 3/2}{

      \filldraw[black] (\a,\b) circle (0.25);

    }

    \foreach \a/\b in {3/0, 2/2}{

      \draw (\a,\b) circle (0.25);

    }

    \end{scope}

\end{tikzpicture}

\caption{The rectangles $R_{1, i}', R_{2, i}', P_{i, j}', Q_{i, j}'$ in the special case of a $6 \times 6$ grid, where each domain is drawn from a generator $x$ (drawn by \textbullet) to a generator $y$ (drawn by $\circ$)}.\label{fig6}\end{figure}

(See Figure $\ref{fig6}$.) We cancel each of these rectangles in the boundary as follows: 

\begin{itemize}

\item $R_{1, 1}'$ occurs in $\partial B_0$ and $\partial C_1$, $R_{1, n-1}'$ occurs in $\partial A_{n-1}$ and $\partial C_{n-2}$, and for $i = 2, \dots, n-2$, $R_{1, i}'$ occurs in $\partial C_{i - 1}$ and $\partial C_{i}$.

\item $R_{2, 1}'$ occurs in $\partial A_0$ and $\partial D_1$, $R_{2, n-1}'$ occurs in $\partial B_{n-1}$ and $\partial D_{n-2}$, and for $i = 2, \dots, n-2$, $R_{2, i}'$ occurs in $\partial D_{i - 1}$ and $\partial D_{i}$.

\item $P_{i, n-i-1}'$ occurs in $\partial F_{i, n-i-2}$ and $\partial B_{i}$. For $2 \leq j \leq n - i - 2$, $P_{i, j}'$ occurs in $\partial F_{i, j-1}$ and $\partial F_{i, j}$.

\item $Q_{i, n-i-1}'$ occurs in $\partial G_{i, n-i-2}$ and $\partial A_i$. For $2 \leq j \leq n - i - 2$, $Q_{i, j}'$ occurs in $\partial G_{i, j-1}$ and $\partial G_{i, j}$.

\end{itemize}

Since these are the only rectangles produced by $\partial A_i, \partial B_i, \partial C_i, \partial D_i, \partial E_i, \partial F_{i, j}, \partial G_{i, j}$, we conclude that indeed $\partial U = 0$.\end{proof}

\begin{lemma}$U$ is not homologous to zero in $\CD_*$.\label{lem8}\end{lemma}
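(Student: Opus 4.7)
The plan is to apply the iterated filtration from the proof of Proposition \ref{prop1} and show that $U$ has a nonzero image in the associated graded piece $\CD_*^{(1,\dots,1), 0, x^{\Id}}$, which is precisely the location of the single generator of $H_2$ at the $E^1$ page.

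First, I would compute the filtration grading $A(D) \in \N^n$ for each summand of $U$. The term $A_0 = V_{(n)}$ has $A(A_0) = (1, 1, \dots, 1)$ by construction. By inspection of Figure \ref{fig2}, every other summand has $A$-vector strictly smaller: the annuli $A_i$ with $i \geq 1$ lie in columns other than $V_{(n)}$ and so have $A = 0$; each horizontal annulus $B_i$ meets $V_{(n)}$ in exactly one square, giving $A(B_i)$ with a single nonzero coordinate; and the hexagons $C_i, E_i, F_{i,j}, G_{i,j}$ never reach into the rightmost column. The only borderline cases are the $D_i$, which stretch across the full width of the grid, but each has an indentation near the top that omits at least one square of $V_{(n)}$. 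Thus the image of $U$ in $\CD_*^{(1,\dots,1)}$ is exactly $A_0$.

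Since $A_0 = 1 \cdot V_{(n)}$, we have $M(A_0) = 1$, remainder $D' = 0$, and $B(A_0) = 0$; moreover $V_{(n)}$ is a constant domain at $x^{\Id}$. Hence the projection of $U$ lies in $\CD_*^{(1,\dots,1), 0, x^{\Id}}$, and by the proof of Proposition \ref{prop1} this piece has rank one in grading $2$ (generated by $V_{(n)}$), contributing the $l = 1$ summand to $E^1$. Because $H_2(\CD_*; \Z/2) \cong \Z/2$ by Proposition \ref{prop1} agrees with the total rank of $E^1$ in grading $2$, the spectral sequence must collapse there, so the nonzero image of $U$ at $E^\infty$ shows that $U$ represents the generator of $H_2(\CD_*; \Z/2)$; in particular $U$ is not a boundary. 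The principal step is the combinatorial verification that no hexagon summand achieves $A = (1,\dots,1)$, but this follows directly from the explicit shapes drawn in Figure \ref{fig2}, each of which has a ``corner'' strictly inside the grid that prevents it from covering all of $V_{(n)}$.
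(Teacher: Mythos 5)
Your argument is correct but takes a genuinely different route from the paper. The paper constructs an explicit $2$-cochain $r$ that equals $1$ on the rightmost vertical annulus $V_{(n)}$ (from any generator to itself) and $0$ elsewhere, checks directly that $r$ is a cocycle by examining how $V_{(n)}$ can split off from an index-$3$ domain, and then observes $r(U) = 1$ because $A_0$ is the unique copy of $V_{(n)}$ appearing in $U$. You instead run $U$ back through the iterated filtration from the proof of Proposition \ref{prop1}, show that its projection to the top associated-graded piece $\CD_*^{(1,\ldots,1),\,0,\,x^{\Id}}$ is the nonzero generator, and invoke collapse of the spectral sequence (which is already established by Proposition \ref{prop1}, since the $E^1$ page is concentrated in even gradings). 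Both hinge on the same underlying observation --- $A_0$ is the only summand of $U$ that covers all of the rightmost column --- but the paper packages it as a pairing with a duality-type cocycle, while you package it as a filtration-level computation. The paper's version is more self-contained and avoids having to argue that the maximal filtration component of $U$ controls its class in $H_*$ (a point that is intuitively clear but requires care since the $A$-filtration is by $\N^n$ with the coordinate-wise partial order, which is not total). Your version is more conceptual in that it makes explicit exactly which associated-graded generator the class $[U]$ lives over, and meshes more tightly with the filtration already built. One small imprecision: you write that ``$V_{(n)}$ is a constant domain at $x^{\Id}$,'' but $V_{(n)}$ is an annulus, not constant; what you mean is that $A_0$ is a domain from $x^{\Id}$ to itself, and the remainder $D'$ after removing $V_{(n)}$ is the constant domain $c_{x^{\Id}}$.
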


\begin{proof} Let $r$ be the $2$-cochain which is $1$ on the rightmost vertical annulus from any generator to itself, and zero on all other domains; we will first show that $r$ is a cocycle, at which point it suffices to show that $r(U) \neq 0$. Let $E$ be an index $3$ domain. If $E$ does not contain the rightmost vertical annulus, then clearly $\delta r(E) = 0$. If $E$ does contain the rightmost vertical annulus, then $E$ can be written exactly two ways as the product of the rightmost vertical annulus $V_{(n)}$ with an index $1$ domain: $E = D * V_{(n)} = V_{(n)} * D$. So $\delta r(E) = 0$ and therefore $r$ is a cocycle, and $r(U) = 1$ since $U$ contains exactly one copy of the rightmost vertical annulus.\end{proof}

\begin{proof}[Proof of Proposition \ref{prop2}]This immediately follows from Lemmas $\ref{lem7}$ and $\ref{lem8}$ and Proposition $\ref{prop1}$.\end{proof}

\section{Sign Assigments}

In order to extend $\CD_*$ over $\Z$ coefficients (and to frame some of the $0$-dimensional moduli spaces in the Manolescu-Sarkar construction), we need a sign assignment for $\CD_*$, which is a particular $\Z/2$-valued $1$-cochain on $\CD_*$. The following conditions for a sign assignment ensures that $1$-dimensional moduli spaces are frameable, since their boundaries must have opposite signs---see \cite{MS} for more details, and note also that this agrees with the sign assignments defined by \cite{MOST} and \cite{GA}, though we are giving a new proof of their existence.

\begin{definition}A sign assigment for $\G$ is a $\Z/2$-valued $1$-cochain $s$ on $\CD_*$ such that
\begin{enumerate}

\item (Square Rule) If $D_1, D_2, D_3, D_4$ are distinct rectangles such that $D_1 * D_2 = D_3 * D_4 = E$ which is not an annulus, then $s(D_1) + s(D_2) = s(D_3) + s(D_4) + 1$.


\item (Annuli) If $D_1, D_2$ are rectangles such that $D_1 * D_2$ is a vertical annulus, $s(D_1) = s(D_2) + 1$. If $D_1, D_2$ are rectangles such that $D_1 * D_2$ is a horizontal annulus, $s(D_1) = s(D_2)$.

\end{enumerate}\label{def2}\end{definition}

In order to prove that such a sign assignment exists, we will show that the $2$-cocycle that we hypothesize to be $\delta s$ is indeed a $2$-coboundary.

\begin{lemma}Let $T$ be the $2$-cochain with values
\begin{enumerate}

\item (Square Rule) For any index $2$ domain $D$ that is not an annulus, $T(D) = 1$.


\item (Annuli) $T(V) = 1$ for all vertical annuli $V$, and $T(H) = 0$ for all horizontal annuli $H$.

\end{enumerate}
Then $T$ is a $2$-coboundary.\label{lem2}\end{lemma}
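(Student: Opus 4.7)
The plan is to apply the homological criterion afforded by Proposition \ref{prop1}. Since $H_2(\CD_*; \Z/2) \cong \Z/2$ is generated by $U$, the universal coefficient theorem yields $H^2(\CD_*; \Z/2) \cong \Z/2$, and the cohomology class $[T] \in H^2$ vanishes precisely when $T$ is a cocycle pairing trivially with $U$. So the problem reduces to two checks: (i) $\delta T = 0$, and (ii) $T(U) = 0$.

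For (i), I would verify $\delta T(E) = 0$ on every positive index-$3$ domain $E$. Writing $E$ as a triple composition $R_1 * R_2 * R_3$, the coboundary is
\begin{align*}
\delta T(E) = \sum_{E = R * D} T(D) + \sum_{E = D * R} T(D),
\end{align*}
and because $T$ evaluates to $0$ on horizontal annuli and to $1$ on every other index-$2$ domain, the identity reduces to a parity count of those decompositions whose index-$2$ factor is not a horizontal annulus. The argument parallels the classification used in the proof of $\partial^2 = 0$ in Lemma \ref{lem1}: one enumerates index-$3$ domains by how $R_1, R_2, R_3$ sit relative to one another (disjoint, overlapping, forming hexagons, forming annuli, etc.) and verifies cancellation modulo $2$ in each geometric type, being careful to subtract the horizontal-annulus decompositions. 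This case analysis is the main obstacle: it is finite and local, but tedious, since index-$3$ domains admit substantially more configurations than index-$2$ domains.

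For (ii), I would evaluate $T(U)$ directly from the definition. The annular summands $A_i$ (for $i = 0, \dots, n-1$) are vertical annuli, each contributing $1$; the $B_i$ are horizontal annuli, each contributing $0$; and every remaining summand $C_i, D_i, E_i, F_{i,j}, G_{i,j}$ is a hexagon, each contributing $1$. Noting that $\sum_{i=1}^{n-3}(n - i - 2) = \binom{n-2}{2}$ counts each of the $F$- and $G$-families, I collect
\begin{align*}
T(U) = n + 3(n-2) + 2\binom{n-2}{2} = n(n-1) \equiv 0 \pmod{2},
\end{align*}
since $n(n-1)$ is always even. Combining (i) and (ii), the class $[T]$ is zero in $H^2(\CD_*; \Z/2)$, so $T = \delta s$ for some $1$-cochain $s$, which is exactly the assertion that $T$ is a $2$-coboundary.
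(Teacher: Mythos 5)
Your strategy is exactly the paper's: by Propositions~\ref{prop1} and~\ref{prop2}, the lemma reduces to showing (i) $\delta T = 0$ and (ii) $T(U) = 0$, and your computation of $T(U) = n + 3(n-2) + 2\binom{n-2}{2} \equiv 0 \pmod 2$ is the same as the paper's, up to the algebraic simplification to $n(n-1)$.

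The problem is step (i), which is the actual content of the lemma, and which your proposal does not carry out. You sketch a geometric case analysis of positive index-$3$ domains---enumerating the configurations of $R_1 * R_2 * R_3$ and checking parity of non-horizontal-annulus decompositions type by type---and explicitly flag it as ``the main obstacle,'' but then leave it as a claim. The paper avoids that enumeration entirely. It first pairs off the annular contributions: for each decomposition $E = R * A$ with $A$ a horizontal or vertical annulus, the same $2$-chain gives $E = A' * R$ with $A'$ an annulus of the same type, so annuli appear an even number of times in $\partial E$. It then shows that the \emph{total} number of index-$2$ domains appearing in $\partial E$ is even, by forming a graph whose vertices are the intermediate generators arising in decompositions of $E$ (at levels $3, 2, 1, 0$) with edges corresponding to breaking off a rectangle, and double-counting the edges between levels $2$ and $1$; the equality of level-$2$ and level-$1$ vertex counts (each level-$2$ vertex has downward degree $2$ and each level-$1$ vertex has upward degree $2$) forces the parity. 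Subtracting the annuli, the non-annular index-$2$ domains in $\partial E$ are therefore also even in number, so $\langle T, \partial E \rangle = 0$. This counting argument sidesteps the need to classify the shapes an index-$3$ domain can take. As written, your proof has a genuine gap at exactly this point: you would need to either complete the tedious case analysis you propose, or substitute a cleaner global argument of the kind the paper uses.
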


\begin{proof} First, we show that $T$ is a cocycle. Let $E$ be any index $3$ domain---we must show that $\langle T, \partial E \rangle = 0$. For every decomposition $E = D * A$, where $A$ is a vertical or horizontal annulus, there is a corresponding decomposition $E = A * D$, so that $A$ occurs an even number of times in $\partial E$. It now suffices to show that $\partial E$ contains an even number of every other type of index $2$ domain.

To every index $3$ domain $E$ from a generator $x$ to a generator $y$, consider a graph with vertices $x$ at level 3, $y$ at level $0$, and edges down $1$ level corresponding to each way to break off an index $1$ domain (see Figure $\hyperref[fig3]{6}$ for an example of such a graph). Then each level $2$ vertex has an index $2$ domain to $y$, which decomposes into rectangles exactly two ways, so each level $2$ vertex has downward degree $2$, and each level $1$ vertex has an index $2$ domain from $x$, which decomposes into rectangles exactly two ways, so each level $1$ vertex has upward degree $2$. Therefore there are the same number of level $2$ and level $1$ vertices, so since each index $2$ domain that shows up in $\partial E$ corresponds to a level $2$ or $1$ vertex, there are an even number of index $2$ domains. Since an even number of these are annuli, we must therefore have an even number of hexagons. This shows that $\langle T, \partial E \rangle = 0$, as desired.

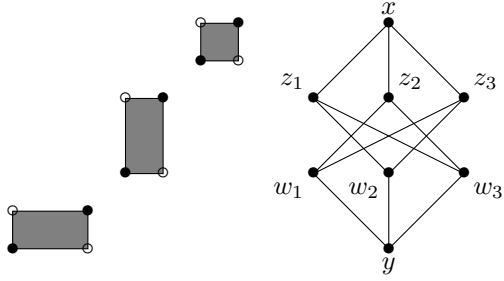
\begin{figure}\begin{tikzpicture} [scale = 0.5]

\draw[fill=gray]    (0, 0) rectangle (2, 1);
\draw[fill=gray]    (3, 2) rectangle (4, 4);
\draw[fill=gray]    (5, 5) rectangle (6, 6);

\node[] at (0, 0) {\textbullet};
\node[] at (2, 1) {\textbullet};
\node[] at (3, 2) {\textbullet};
\node[] at (4, 4) {\textbullet};
\node[] at (5, 5) {\textbullet};
\node[] at (6, 6) {\textbullet};
\node[] at (0, 1) {$\circ$};
\node[] at (2, 0) {$\circ$};
\node[] at (4, 2) {$\circ$};
\node[] at (3, 4) {$\circ$};
\node[] at (6, 5) {$\circ$};
\node[] at (5, 6) {$\circ$};

\node[] at (10, 6) {\textbullet};
\node[] at (8, 4) {\textbullet};
\node[] at (10, 4) {\textbullet};
\node[] at (12, 4) {\textbullet};
\node[] at (8, 2) {\textbullet};
\node[] at (10, 2) {\textbullet};
\node[] at (12, 2) {\textbullet};
\node[] at (10, 0) {\textbullet};

\draw (10, 6) -- (8, 4);
\draw (10, 6) -- (10, 4);
\draw (10, 6) -- (12, 4);
\draw (8, 4) -- (10, 2);
\draw (8, 4) -- (12, 2);
\draw (10, 4) -- (8, 2);
\draw (10, 4) -- (12, 2);
\draw (12, 4) -- (10, 2);
\draw (12, 4) -- (8, 2);
\draw (10, 0) -- (8, 2);
\draw (10, 0) -- (10, 2);
\draw (10, 0) -- (12, 2);

\node[above] at (10, 6) {$x$};
\node[above left] at (8, 4) {$z_1$};
\node[above right] at (10, 4) {$z_2$};
\node[above right] at (12, 4) {$z_3$};
\node[below left] at (8, 2) {$w_1$};
\node[below left] at (10, 2) {$w_2$};
\node[below right] at (12, 2) {$w_3$};
\node[below] at (10, 0) {$y$};

\end{tikzpicture}\label{fig3}\caption{An example of a positive index $3$ domain from a generator $x$ (drawn by \textbullet) to a generator $y$ (drawn by $\circ$), along with the graph defined in the proof of Proposition $\ref{prop3}$. The generators $z_i$ are given by $\circ$ on the $i^{th}$ rectangle from the left and \textbullet \: on the other two, while the generators $w_i$ are given by \textbullet \: on the $i^{th}$ rectangle from the left and $\circ$ on the other two.}.\end{figure}

By Propositions $\ref{prop1}$ and $\ref{prop2}$ it now suffices to show that $T(U) = 0$ where $U$ is the generator of $H_2(\CD_*)$. By definition, $U$ consists of $n$ annuli $A_i$, $n$ annuli $B_i$, $n-2$ hexagons $C_i$, $n - 2$ hexagons $D_i$, $n - 2$ hexagons $E_i$, $\binom{n - 2}{2}$ hexagons $F_{i, j}$, and $\binom{n - 2}{2}$ hexagons $G_{i, j}$, so for any $T$ satisfying the conditions of Lemma $\ref{lem2}$,
\begin{align*}T(U) \equiv n + 3(n-2) + 2 \binom{n - 2}{2} \equiv 0 \pmod 2\end{align*}
so that $T$ is indeed a coboundary.
\end{proof}

\begin{lemma}Let $T$ be the $2$-coboundary from Lemma $\ref{lem2}$. Then $T = \delta s$ if and only if $s$ is a sign assignment.\label{lem3}\end{lemma}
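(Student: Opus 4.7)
The proof is essentially an unpacking of definitions, so my plan is to compute $\delta s(D)$ directly for each type of index $2$ domain $D$ and match the resulting equations against the sign assignment conditions of Definition \ref{def2}.

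First I would recall the explicit form of the differential from Definition \ref{def1}: for any positive domain $D$, $\partial(D) = \sum_{R * E = D} E + \sum_{E * R = D} E$, where $R$ ranges over rectangles. When $D$ is an index $2$ domain, each $E$ on the right-hand side is itself a rectangle. I would then evaluate $\delta s(D) = s(\partial D)$ case by case according to the classification of positive index $2$ domains stated between Lemma \ref{lem1} and Definition \ref{def1}.

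For the non-annulus cases (hexagons, and pairs of overlapping or disjoint rectangles), there are exactly two rectangle decompositions $D = R_1 * R_2 = R_3 * R_4$, and each decomposition $R_i * R_j$ contributes once to each of the two sums in $\partial$; thus $\partial D = R_1 + R_2 + R_3 + R_4$ modulo $2$. The equation $T(D) = \delta s(D)$ reads $1 = s(R_1) + s(R_2) + s(R_3) + s(R_4)$, which rearranges exactly to the Square Rule $s(R_1) + s(R_2) = s(R_3) + s(R_4) + 1$. For an annulus there is a unique decomposition $D = R_1 * R_2$, contributing $R_2$ to the first sum and $R_1$ to the second sum, so $\partial D = R_1 + R_2$. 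The equation $T(D) = \delta s(D)$ then reads $s(R_1) + s(R_2) = 1$ for vertical annuli and $s(R_1) + s(R_2) = 0$ for horizontal annuli, which are precisely the two Annuli conditions of Definition \ref{def2}.

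Conversely, since every index $2$ domain falls into exactly one of these cases and generates $\CD_2$, verifying the Square Rule and Annuli conditions forces $\delta s$ to agree with $T$ on every generator. The only subtlety worth being careful about is the annulus count: although each annulus has only one decomposition $R_1 * R_2$, both $R_1$ and $R_2$ appear in $\partial D$ (one from each of the two sums defining $\partial$), which is why the Annuli condition in Definition \ref{def2} involves the sum $s(R_1) + s(R_2)$ rather than a single term. There is no genuine obstacle here: the lemma is a direct translation between the conditions on $s$ and the values of $\delta s$ on the index $2$ generators of $\CD_*$.
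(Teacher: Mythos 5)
Your proof is correct and takes exactly the route the paper intends; the paper's own proof consists only of the remark that the statement is clear from the definitions, and your write-up is simply the detailed unpacking of that claim, correctly matching $\partial D = R_1 + R_2 + R_3 + R_4$ for non-annular index $2$ domains to the Square Rule and $\partial D = R_1 + R_2$ for annuli to the Annuli conditions.
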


\begin{proof}This is clear from the definitions.\end{proof}

\begin{proof}[Proof of Theorem \ref{prop11}] Existence immediately follows from Lemmas $\ref{lem2}$ and $\ref{lem3}$. For uniqueness, suppose $T = \delta s = \delta s'$. Then $\delta (s - s') = 0$, so $s - s'$ is a $1$-cocycle, which is cohomologous to zero by Proposition $\ref{prop1}$, so there is a $0$-cochain $g$ such that $s = s' + \delta g$.\end{proof}

Given a sign assignment $s$, we can use it to redefine $\CD_*$ in $\Z$ coefficients as follows
\begin{definition}$\CD_*(\G; \Z)$ is freely generated over $\G$ by the positive domains, with the homological grading being the Maslov index. The differential $\partial: \CD_k \rightarrow \CD_{k - 1}$ of $D \in \mathscr{D}^+(x, y)$ is given by
\begin{align*}\partial(D) = \sum\limits_{R * E = D} (-1)^{s(R)} E + (-1)^k \sum\limits_{E * R = D} (-1)^{s(R)} E\end{align*}
where $R$ is a domain of index $1$ from $x$ to some generator $z$ and $E$ is a positive domain from $z$ to $y$.\label{def3}\end{definition}

We now have analogues of Lemma $\ref{lem1}$ and Proposition $\ref{prop1}$ in $\Z$ coefficients, in the following Lemma and Proposition $\ref{prop3}$, respectively.

\begin{lemma}$(\CD_*, \partial)$ is a chain complex.\label{lem4}\end{lemma}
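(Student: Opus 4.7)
The plan is to mimic the proof of Lemma \ref{lem1}, this time tracking the signs coming from $s$ and from the factor $(-1)^k$ introduced in Definition \ref{def3}. For $D \in \CD_k$, expanding $\partial^2 D$ produces four sums, indexed by where the two successively peeled rectangles sit relative to the remaining index-$(k-2)$ piece $E'$:
\begin{align*}
S_1 &= \sum_{R*R'*E' = D} (-1)^{s(R)+s(R')} E', \\
S_2 &= (-1)^{k-1}\!\!\sum_{R*E'*R' = D} (-1)^{s(R)+s(R')} E', \\
S_3 &= (-1)^{k}\!\!\sum_{R'*E'*R = D} (-1)^{s(R)+s(R')} E', \\
S_4 &= -\!\!\sum_{E'*R'*R = D} (-1)^{s(R)+s(R')} E',
\end{align*}
so that $\partial^2 D = S_1 + S_2 + S_3 + S_4$. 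I will show $S_2 + S_3 = 0$ and $S_1 + S_4 = 0$ separately.

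For the first cancellation, any triple decomposition $D = L * E' * M$ with $L, M$ rectangles contributes once to $S_2$ (with $R = L$, $R' = M$) and once to $S_3$ (with $R' = L$, $R = M$), both with the same inner sign $(-1)^{s(L)+s(M)}$ but with opposite outer factors $(-1)^{k-1}$ and $(-1)^k$. Hence $S_2$ and $S_3$ cancel term by term; this is exactly the role of the $(-1)^k$ in the second summand of Definition \ref{def3}.

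For $S_1 + S_4 = 0$, I group by the index-$2$ piece $F$ being peeled off and define $\sigma(F) := \sum_{F = R_1 * R_2} (-1)^{s(R_1)+s(R_2)}$. Recall that an index-$2$ domain is either a hexagon, a disjoint or overlapping union of two rectangles, or a horizontal/vertical annulus. In the first two cases $F$ has exactly two rectangular decompositions, and the Square Rule forces these to have opposite signs, so $\sigma(F) = 0$ and all non-annulus $F$ drop out of both $S_1$ and $S_4$. For an annulus there is a unique decomposition, and the Annuli rule gives $\sigma(F) = -1$ if $F$ is vertical and $\sigma(F) = +1$ if $F$ is horizontal.

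To finish, I pair annulus contributions of $S_1$ with those of $S_4$. Because every generator has a unique coordinate on each column and each row, every annulus $F$ is a domain from any generator to itself; consequently $D = F * E'$ and $D = E' * F$ represent the same underlying $2$-chain and both are valid decompositions of $D$ whenever either is. Under this bijection $S_1$ contributes $\sigma(F) E'$ while $S_4$ contributes $-\sigma(F) E'$, and these cancel. The main obstacle is the sign bookkeeping; once the four terms are written down, the $(-1)^k$ factor, the Square Rule, and the Annuli rule are each used exactly once, for one of the three required cancellations.
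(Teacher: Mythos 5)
Your proof is correct and fills in exactly the details the paper leaves to the reader (its own proof just says to mimic Lemma \ref{lem1} while keeping track of signs). The four-term expansion, the term-by-term cancellation of $S_2$ and $S_3$ via the $(-1)^k$ factor, the vanishing of $\sigma(F)$ for hexagons and two-rectangle domains via the Square Rule, and the matching of annulus contributions between $S_1$ and $S_4$ via the Annuli rule together constitute the same argument in the same structure, with the sign bookkeeping done explicitly.
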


\begin{proof}The proof is similar to the proof of $\ref{lem1}$, except we must keep track of signs.\end{proof}

\begin{proof}[Proof of Proposition \ref{prop3}]The proof is similar to the proof of Proposition $\ref{prop1}$. Specifically, our proof of Proposition $\ref{prop1}$ over $\Z/2$ adapts the proof of \cite[Proposition 3.4]{MS}. This proof is over $\Z$, and a similar adaptation will prove Proposition $\ref{prop3}$.\end{proof}

\section{The Obstruction Complex with Partitions}

The moduli spaces in the construction of the $1$-flow category require more than just positive domains. Since periodic domains (annuli) can bubble, \cite{MS} introduces a new complex to keep track of the bubbles---since there are $n$ different types of bubbles (corresponding to bubbling of the $j^{th}$ horizontal or vertical annulus) which can be at the same or different heights, these correspond to $n$-tuples of ordered partitions.

It is convenient to use both of the following equivalent definitions of an ordered partition of a positive integer $N$ (and when $N = 0$, a partition of $N$ is the empty set).

\begin{itemize}

\item An ordered partition $\lambda$ is a tuple of nonnegative integers $\lambda = (\lambda_1, \dots, \lambda_m)$ such that $N = \sum \lambda_i$. ($m$ is called the $\textit{length}$ of the partition, and is denoted $l(\lambda)$.)

\item An ordered partition $\lambda$ is a tuple $\epsilon(\lambda) = (\epsilon_1(\lambda), \dots, \epsilon_{N - 1}(\lambda)) \in \{ 0, 1 \}^{N - 1}$, where an $\epsilon_i$ equalling $1$ indicates a split at that point. (For instance, the ordered partitions $(1, 1, 1), (1, 2), (2, 1),$ and $(3)$ of $3$ are written $(1, 1), (1, 0), (0, 1),$ and $(0, 0)$, respectively.)

\end{itemize}

Besides annuli bubbling off (the second type of terms that will be in the differential---the first being terms in the differential of $\CD_*$), there are two other boundary degenerations that occur with existing bubbles. Bubbles of the same type may come to the same height (the third type of term), and bubbles may go to height $\pm \infty$ (the fourth and final type of term). The corresponding changes to the partitions can be described as follows:

\begin{definition} The following changes to an ordered partition will describe the differential terms---see \cite[Definitions 4.1, 4.2, 4.3]{MS} for more details.

\begin{itemize}

\item A unit enlargement (at position $k$) increases $N$ by $1$ and adds a $1$ to the tuple $\lambda$ (at position $k$). The set of unit enlargements of $\lambda$ is denoted $\text{UE}(\lambda)$.

\item An elementary coarsening (at position $k$) replaces both terms $\lambda_k$ and $\lambda_{k + 1}$ with one term $\lambda_k + \lambda_{k + 1}$. The set of elementary coarsenings of $\lambda$ is denoted $\text{EC}(\lambda)$.

\item An initial reduction removes $\lambda_1$ (and decreases $N$ by $\lambda_1$), and a final reduction removes $\lambda_m$ (and decreases $N$ by $\lambda_m$). The set of initial reductions (respectively, final reductions) of $\lambda$ is denoted $\text{IR}(\lambda)$ (respectively, $\text{FR}(\lambda)$), where we consider both sets empty if $N = 0$.

\end{itemize}

\label{def4}\end{definition}

We are now ready to define the complex of domains with partitions, $\CDP_*$.

\begin{definition}The complex of positive domains with partitions $\CDP_* = \CDP_*(\G; \Z/2)$ is freely generated by triples of the form $D, \vec{N}, \vec{\lambda}$, where
\begin{itemize}

\item $D \in \mathscr{D}^+(x, y)$ is a positive domain.

\item $\vec{N} \in \N^{n}$ is an $n$-tuple of nonnegative integers, $\vec{N} = (N_1, \dots, N_n)$.

\item $\vec{\lambda} = (\lambda_1, \dots, \lambda_n)$ is an $n$-tuple of ordered partitions, where $\lambda_j = (\lambda_{j, 1}, \dots, \lambda_{j, m_j})$ is an ordered partition of $N_j$.

\end{itemize}
We denote $\lvert \vec{N} \rvert := \sum_{j = 1}^{n} N_j$, and define the total length of $\vec{\lambda}$ to be $\lvert l(\vec{\lambda}) \rvert := \sum_{j = 1}^{n} l(\lambda_j)$. The grading of $(D, \vec{N}, \vec{\lambda})$ is given by the Maslov index of $D$ plus $\lvert l(\vec{\lambda}) \rvert$. The differential is given by the sum of the following four terms.
\begin{itemize}

\item Type I terms, given by taking out a rectangle from $D$, just like in the differential of $\CD_*$.

\item Type II terms, given by taking out a vertical or horizontal annulus passing through $O_j$ from $D$ and performing a unit enlargement to $\lambda_j$.

\item Type III terms, given by an elementary coarsening of one of the partitions $\lambda_j$.

\item Type IV terms, given by taking the initial or final reduction of one of the partitions $\lambda_j$.

\end{itemize}

Precisely, we can write $\partial = \partial_1 + \partial_2 + \partial_3 + \partial_4$ where

\begin{align*}\partial_1(D, \vec{N}, \vec{\lambda}) &= \sum\limits_{R * E = D} (E, \vec{N}, \vec{\lambda}) + \sum\limits_{E * R = D} (E, \vec{N}, \vec{\lambda}) \\ \partial_2(D, \vec{N}, \vec{\lambda}) &= \sum\limits_{j = 1}^{n} \sum\limits_{D = E * H_j \text{ or } E * V_j} \sum\limits_{\lambda_j' \in \text{UE}(\lambda_j)} (E, \vec{N} + \vec{e}_j, \vec{\lambda}') \\ \partial_3(D, \vec{N}, \vec{\lambda}) &= \sum\limits_{j = 1}^{n} \sum\limits_{\lambda_j' \in \text{EC}(\lambda_j)} (D, \vec{N}, \vec{\lambda}') \\ \partial_4(D, \vec{N}, \vec{\lambda}) &= \sum\limits_{j = 1}^{n} \sum\limits_{\lambda_j' \in \text{IR}(\lambda_j)} (D, \vec{N} - \lambda_{j, 1}\vec{e}_j, \vec{\lambda}') + \sum\limits_{j = 1}^{n} \sum\limits_{\lambda_j' \in \text{FR}(\lambda_j)} (D, \vec{N} - \lambda_{j, m_j}\vec{e}_j, \vec{\lambda}')\end{align*}

As in Definition $\ref{def1}$, $R$ is a rectangle, and the annuli $H_j$, $V_j$ are the ones passing through the $j^{th}$ O marking. We also use $\vec{\lambda}' := (\lambda_1, ..., \lambda_{j - 1}, \lambda_j', \lambda_{j + 1}, ..., \lambda_n)$, and $\vec{e}_j := (0, ..., 0, 1, 0, ..., 0)$ with the $1$ in the $j^{th}$ position.\label{def5}\end{definition}

(See \cite[Section 4.2]{MS} for more details.)

It will help us to classify the lower grading generators---that is, generators of $\CDP_0, \CDP_1, \CDP_2, \CDP_3$.

\begin{enumerate}

\item[(0)] $\CDP_0$ is generated by the constant domains with no partitions $(c_{x}, 0, 0)$ for some generator $x$.

\item[(1)] $\CDP_1$ is generated by rectangles with no partitions $(R, 0, 0)$ as well as triples of the form $(c_x, N\vec{e}_j, (N))$ for a constaint domain $c_x$.

\item[(2)] $\CDP_2$ is generated by Maslov index $2$ domains with no partition $(D, 0, 0)$ (for a classification of the kinds of domains $D$, see above or \cite{OSS}), triples of the form $(R, N\vec{e}_j, (N))$ for a rectangle $R$, or a constant domain with partitions of total length $2$. Specifically, we can have triples of the form $(c_x, N\vec{e}_j + M \vec{e}_k, ((N), (M)))$ (where $j \neq k$), or $(c_x, (N + M)\vec{e}_j, (N, M))$.

\item[(3)] Finally, $\CDP_3$ is generated by Maslov index $3$ domains with no partition, Maslov index $2$ domains with a partition of the form $(D, N\vec{e}_j, (N))$, rectangles with a partition of total length $2$, and constant domains with partitions of total length $3$, which has the following cases:

\begin{itemize}

\item $(c_x, N_j \vec{e}_j + N_k \vec{e}_k + N_l \vec{e}_l, ((N_j), (N_k), (N_l))$ for $j, k, l$ distinct.

\item $(c_x, (N_j + M_j) \vec{e}_j + N_k \vec{e}_k, ((N_j, M_j), (N_k))$ for $j, k$ distinct.

\item $(c_x, (N_j + M_j + P_j) \vec{e}_j, (N_j, M_j, P_j))$.

\end{itemize}\end{enumerate}

\begin{lemma}$(\CDP_*, \partial)$ is a chain complex.\label{lem5}\end{lemma}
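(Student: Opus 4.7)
The plan is to verify $\partial^2 = 0$ over $\Z/2$ by expanding $\partial = \partial_1 + \partial_2 + \partial_3 + \partial_4$ and showing that $\sum_{i,j=1}^{4} \partial_i \partial_j$ vanishes, grouping the cross-terms by type. Several compositions vanish for formal reasons: $\partial_1^2 = 0$ is exactly the content of Lemma \ref{lem1} (the partition data $\vec{N}, \vec{\lambda}$ rides along as a spectator), and because $\partial_1$ only modifies $D$ while $\partial_3, \partial_4$ only modify $\vec{\lambda}$, each of $\partial_1 \partial_3 + \partial_3 \partial_1$ and $\partial_1 \partial_4 + \partial_4 \partial_1$ cancels term-by-term mod $2$.

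Next, the pure partition compositions $\partial_3^2$, $\partial_3 \partial_4 + \partial_4 \partial_3$, and $\partial_4^2$ are independent of $D$ and act coordinate-wise on each $\lambda_j$, so it suffices to check $(\partial_3 + \partial_4)^2 = 0$ on a single ordered partition. This is a direct combinatorial check: two elementary coarsenings at non-overlapping sites commute, and at overlapping sites yield the same fully-merged result in both orders; two reductions commute with each other and with coarsenings at interior positions; and the mixed terms pair off as in the explicit length-$3$ computation
\begin{align*}
(\partial_3 + \partial_4)^2 (a,b,c) &= \bigl[(a+b) + (b+c)\bigr] + \bigl[(c) + (a+b) + (b+c) + (a)\bigr] \\
&\quad + \bigl[(c) + (b) + (b) + (a)\bigr] \;\equiv\; 0 \pmod{2}.
\end{align*}
This is the associahedron-style cell structure of \cite[Section 4.2]{MS}.

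The remaining cancellations involve $\partial_2$, and are the main obstacle. For $\partial_1 \partial_2 + \partial_2 \partial_1$, we pair each way of removing a rectangle and an annulus from $D$ with its swap: when they are disjoint this is immediate, and when the rectangle lies inside the annulus the resulting index-$3$ subdomain admits exactly two rectangle-then-annulus decompositions which cancel mod $2$. For $\partial_2 \partial_3 + \partial_3 \partial_2$ and $\partial_2 \partial_4 + \partial_4 \partial_2$, the $1$ inserted by Type II commutes with coarsenings and reductions at unrelated positions of $\lambda_j$, and the terms where they do interact pair off against alternative insertion positions in the other order. Finally, $\partial_2^2$ splits by whether the two removed annuli pass through different $O_j$'s (in which case the removals commute) or the same $O_j$, in which case the two unit enlargements of $\lambda_j$ are summed in all orders, and any asymmetry pairs off against the $\partial_3$-contribution that coarsens two freshly-inserted adjacent $1$'s into a $2$.

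The main obstacle is keeping this last bookkeeping straight, especially when $D$ contains multiple copies of the same annulus, so that the same Type II term can be produced by several sequences. The argument mirrors \cite[Section 4.2]{MS} essentially verbatim, since the passage from $\widehat{\CDP}_*$ to $\CDP_*$ (allowing domains through every square) does not alter the local combinatorics of annulus removal and partition operations; it only enlarges the index set over which the Type II sum is taken. Tracking the cancellations uniformly across all $j$ completes the verification.
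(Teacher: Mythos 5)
Your overall strategy---expanding $\partial^2$ into the cross-terms $\partial_i\partial_j$ and cancelling them group by group by reordering or repositioning the two operations---is the same as the paper's, and most of your groupings match (the paper handles $\partial_4^2 + \partial_3\partial_4 + \partial_4\partial_3$ together, exactly as your length-$3$ computation does). However, your treatment of $\partial_2^2$ contains a genuine error: you claim that when both removed annuli pass through the same $O_j$, ``any asymmetry pairs off against the $\partial_3$-contribution that coarsens two freshly-inserted adjacent $1$'s into a $2$.'' No such cancellation is possible. A term of $\partial_2^2$ has $N_j$ increased by $2$, whereas $\partial_3$ does not change $\vec{N}$ at all and $\partial_2$ changes it by exactly one $\vec{e}_j$, so every term of $\partial_2\partial_3$ or $\partial_3\partial_2$ has $\lvert \vec{N} \rvert$ increased by only $1$ and hence lives on a different generator of $\CDP_*$; it can never cancel a $\partial_2^2$ term. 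Fortunately no help is needed: $\partial_2^2 = 0$ on its own, because each partition with two new parts equal to $1$ arises from exactly two ordered insertion sequences---two genuinely different orders when the insertion sites are separated, and, in the adjacent case (e.g.\ producing $(1,1,a,b)$ from $(a,b)$), two different position labels for the second insertion. The same even count disposes of the ``multiple copies of the same annulus'' worry you flag at the end. This is the content of the paper's remark that the term shows up ``once for each order in doing the unit enlargements.''

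A smaller imprecision: for $\partial_2\partial_4 + \partial_4\partial_2$, your phrase ``pair off against alternative insertion positions in the other order'' does not visibly cover the boundary case, which is the one place where the pairing is not a reordering at all: a unit enlargement at the very beginning of $\lambda_j$ followed by the initial reduction, and a unit enlargement at the very end followed by the final reduction, both return $(E, \vec{N}, \vec{\lambda})$ with $E = D - A$, and these two terms of $\partial_4\partial_2$ must cancel against \emph{each other} rather than against anything in $\partial_2\partial_4$. The paper isolates this case explicitly, and your writeup should too.
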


\begin{proof} The proof follows a similar case analysis to \cite[Lemma 4.4]{MS}. Write $\partial = \partial_1 + \partial_2 + \partial_3 + \partial_4$, where $\partial_k$ is the type $k$ term in the differential. Since $\partial_1$ is just the differential from $\CD_*$, we have by Lemma $\ref{lem1}$ that $\partial_1^2 = 0$. Now for $\partial_2^2$, the terms will correspond to removing two annuli (and doing two unit enlargements). If the annuli pass through two different $O_i$ and $O_j$, then the corresponding term shows up twice, once in each order. If the annuli pass through the same $O_j$, then the corresponding term also shows up twice---once for each order in doing the unit enlargements. Therefore, $\partial_2^2 = 0$. We can similarly show that
\begin{align*}
&\partial_3^2 = 0 \text{ and} \\& \partial_1 \partial_2 + \partial_2 \partial_1 = 0 \text{ and} \\&\partial_1 \partial_3 + \partial_3 \partial_1 = 0 \text{ and} \\& \partial_2 \partial_3 + \partial_3 \partial_2 = 0 \text{ and} \\& \partial_1 \partial_4 + \partial_4 \partial_1 = 0
\end{align*}
by doing the respective operations in two different orders.

Now consider $\partial_2 \partial_4 + \partial_4 \partial_2$, the terms of which correspond to a unit enlargement and an initial or final reduction, in either order. If one is done to $\lambda_i$ and another to $\lambda_j$ where $i \neq j$, then the two commute and cancel just like before. If both are done to $\lambda_i$, then all terms follow one of these cases:

\begin{itemize}

\item A unit enlargement not at the beginning, followed by an initial reduction. This cancels with the initial reduction followed by doing the enlargement one place earlier.

\item A unit enlargement not at the end, followed by a final reduction. This cancels with the final reduction followed by the same enlargement.

\item A unit enlargement at the beginning, followed by an initial reduction; or a unit enlargement at the end, followed by a final reduction. These cancel with each other.

\end{itemize}

Finally, consider the last terms of $\partial^2$, $\partial_4^2 + \partial_3 \partial_4 + \partial_4 \partial_3$. Again there are some special types of terms:
\begin{itemize}

\item The elementary coarsening of $\lambda_i$ by combining the first two parts, followed by a initial reduction of $\lambda_i$, cancels with two initial reductions of $\lambda_i$.

\item The elementary coarsening of $\lambda_i$ by combining the last two parts, followed by a final reduction of $\lambda_i$, cancels with two final reductions of $\lambda_i$.

\end{itemize}
where all the other terms cancel by doing the operations in the two different orders. \end{proof}

We would like to compute the homology of $\CDP_*$ using successive filtrations, as in the proof of Proposition $\ref{prop1}$.


\begin{prop}There is a filtration on $\CDP_*$ such that the associated graded has homology $(\Z/2)^{2^n} \otimes (\Z/2)[U]$.\label{prop4}\end{prop}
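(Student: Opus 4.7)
The plan is to identify a filtration on $\CDP_*$ whose associated graded decomposes, as a chain complex, into a tensor product $\CD_* \otimes P^{(n)}$, where $P^{(n)}$ is the complex generated by $n$-tuples of ordered partitions with differential $\partial_3 + \partial_4$. Granted this splitting, the proposition follows from the K\"unneth formula over the field $\Z/2$: by Proposition \ref{prop1}, $H_*(\CD_*; \Z/2) = \Z/2[U]$, and I will compute $H_*(P^{(n)}; \Z/2) = (\Z/2)^{2^n}$ separately, so $H_*(\CD_* \otimes P^{(n)}; \Z/2) = \Z/2[U] \otimes (\Z/2)^{2^n}$.

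For the filtration itself, the goal is to arrange that the ``mixing'' differential $\partial_2$---which simultaneously removes an annulus from $D$ and enlarges a partition $\lambda_j$---strictly decreases the filtration grading, while $\partial_1$, $\partial_3$, and $\partial_4$ all preserve it. A natural candidate combines the rightmost-column filtration on $D$ used in the proof of Proposition \ref{prop1} with a filtration by the partition sizes $\vec{N}$, arranged so that the unit enlargements produced by $\partial_2$ are penalized. Since $\partial_1$ acts only on $D$ while $\partial_3 + \partial_4$ acts only on $\vec{\lambda}$, once $\partial_2$ vanishes on the associated graded the induced differential becomes $\partial_1 \otimes 1 + 1 \otimes (\partial_3 + \partial_4)$ and the complex splits as the claimed tensor product.

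To compute $H_*(P^{(n)})$, note that $\partial_3 + \partial_4$ acts on each coordinate $\lambda_j$ independently, so $P^{(n)}$ factors as $P^{\otimes n}$ where $P$ is the single-coordinate partition complex. By K\"unneth it suffices to show $H_*(P) \cong \Z/2 \oplus \Z/2$, generated by the empty partition $()$ in grading $0$ and by $(1)$ in grading $1$. I would establish this via a secondary filtration on $P$ by the integer $N$ being partitioned: $\partial_3$ preserves $N$ while $\partial_4$ strictly decreases it, so on the associated graded only $\partial_3$ survives. The resulting fixed-$N$ subcomplex is, up to a degree shift, the chain complex of subsets of $\{1, \dots, N - 1\}$ graded by size with simplicial boundary mod $2$, which is acyclic for $N \geq 2$ and contributes $\Z/2$ in grading $0$ (respectively $1$) for $N = 0$ (respectively $N = 1$). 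The induced $\partial_4$ on the $E_1$-page vanishes on the surviving generators $()$ and $(1)$, since $\partial_4((1)) = () + () = 0 \pmod 2$, so the spectral sequence degenerates and $H_*(P) = \Z/2 \oplus \Z/2$.

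The main obstacle is the construction of the initial filtration on $\CDP_*$: since both $\partial_1$ and $\partial_2$ modify the domain $D$, isolating $\partial_2$ as the unique differential to be killed requires carefully combining the domain-side filtration from Proposition \ref{prop1} with the partition data $\vec{N}$. Once the filtration is in place, the partition homology is a comparatively clean spectral-sequence computation reducing to the acyclicity of the simplex.
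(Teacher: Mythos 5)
Your approach is correct and is a genuinely cleaner route than the paper's, but one piece of notation needs fixing. The paper's proof of Proposition~\ref{prop4} filters by $A(D)$, then $B(D)$, then (after noting type~II terms are now gone) continues to filter by $\lvert \vec N\rvert$, the end generator $y$, and the total length of $\vec\lambda$, analyzing each associated graded piece $\CDP_*^{a,b,y,\vec N}$ by hand: nontrivial pieces are forced to have $D = lV_{(n)}$, and a hypercube argument shows acyclicity unless every $N_j\in\{0,1\}$. You instead stop after the $(A(D), B(D))$ filtration---which already annihilates $\partial_2$---and observe that the associated graded then carries the split differential $\partial_1\otimes 1 + 1\otimes(\partial_3+\partial_4)$, so it factors as a tensor product of chain complexes over $\Z/2$ and one can invoke K\"unneth. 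This is more conceptual: it cleanly isolates the ``pure partition'' complex $P^{(n)} = P^{\otimes n}$ and computes $H_*(P) = \Z/2\oplus\Z/2$ by the secondary filtration on $N$ (where the fixed-$N$ piece is the augmented simplicial chain complex of a simplex, acyclic for $N\ge 2$, and the $d_1$-differential vanishes because $\partial_4((1)) = () + () = 0 \bmod 2$). That calculation matches what the paper uses implicitly. The one imprecision is that the domain-side tensor factor is not $\CD_*$ but $\mathrm{assgr}_{A,B}(\CD_*)$, the associated graded of $\CD_*$ under the same $(A,B)$-filtration---only the rectangles avoiding the rightmost column and top row survive in $\partial_1$. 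This does not affect the outcome, since the proof of Proposition~\ref{prop1} (filtering further by $y$ and noting the $E_1$-page is concentrated in even gradings) shows $H_*(\mathrm{assgr}_{A,B}(\CD_*)) \cong \Z/2[U]$ as well; but the tensor decomposition should be stated in terms of the associated graded complex rather than $\CD_*$ itself. With that correction, your K\"unneth argument gives exactly $H_*(\mathrm{assgr}(\CDP_*)) \cong \Z/2[U]\otimes(\Z/2)^{2^n}$ as claimed.
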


\begin{proof}We again follow the proof of \cite[Proposition 4.6]{MS}. We can filter the complex $\CDP_*'$ in several steps. First we filter $\CDP_*$ by the quantity $A(D) \in \N^n$ which are the coefficients of $D$ in the rightmost column. As in the proof of Proposition $\ref{prop1}$, we can assume without loss of generality that the minimum of $A(D)$ occurs in the top right corner, and then filter the associated graded $\CDP_*^a$ by $B(D) \in \N^{n - 1}$ which are the coefficients of $D$ in the topmost row. In the associated graded $\CDP_*^{a, b}$, there are no type II terms in the differential, since such terms must decrease either $A$ or $B$. Since $\lvert \vec{N} \rvert$ is kept constant by type I and III terms and decreased by type IV terms, it is a filtration on $\CDP_*^{a, b}$, so filtering by $\lvert \vec{N} \rvert$ and (as in the proof of Proposition $\ref{prop1}$) the end generator $y$ gives a direct sum of complexes $\CDP_*^{a, b, y, \vec{N}}$.

When $a \neq (l, l, \dots, l)$ or $b \neq 0$ or $y \neq x^{\Id}$, filtering by the total length of $\vec{\lambda}$ removes all type III terms and keeps all type I terms, so $\CDP_*^{a, b, y, \vec{N}}$ is a direct sum of complexes $\CD_*^{a, b, y}$ which were all shown to be acyclic in the proof of \cite[Proposition 3.4]{MS}. Additionally, when $a = (l, l, \dots, l)$, $b = 0$, $y = x^{\Id}$, and at least one $N_j > 1$, every generator of $\CDP_*^{a, b, y, \vec{N}}$ is represented by some $(D, \vec{N}, \vec{\lambda})$ where $D = kV_n$, so we only have type III terms. The partitions of $N_j$ are given by $(\epsilon_1, \dots, \epsilon_{N_{j} - 1})$, where the elementary coarsenings just change a $1$ to a $0$. This gives a hypercube complex, which is acyclic. Therefore, we are only left with the associated graded complexes $\CDP^{a, b, y, \vec{N}}$ where $a = (l, l, \dots, l)$, $b = 0$, $y = x^{Id}$, and every $N_j$ is $0$ or $1$.\end{proof} 

\begin{corollary}$H_k(\CDP_*; \Z/2)$ has rank at most
\begin{align*}\sum\limits_{l = 0}^{\lfloor k/2 \rfloor} \binom{n}{k - 2l}\end{align*}\end{corollary}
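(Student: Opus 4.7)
The plan is to derive the bound directly from Proposition \ref{prop4} by invoking the standard fact that, over a field, the homology of a filtered chain complex has total rank in each grading bounded above by the rank of the homology of the associated graded. Since we work with $\Z/2$ coefficients, this applies, so it suffices to compute the grading-$k$ dimension of $(\Z/2)^{2^n} \otimes (\Z/2)[U]$.

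First I would unpack the conventions set at the start of the paper: $(\Z/2)^{2^n}$ has $\binom{n}{i}$ copies of $\Z/2$ concentrated in degree $i$ (with no differential), while $(\Z/2)[U]$ has one copy of $\Z/2$ in every nonnegative even degree (also with no differential). The tensor product then has total $\Z/2$-dimension in grading $k$ equal to
\[
\sum_{\substack{i + j = k \\ j \geq 0,\ j \text{ even}}} \binom{n}{i} \cdot 1 \;=\; \sum_{l = 0}^{\lfloor k/2 \rfloor} \binom{n}{k - 2l},
\]
where $\binom{n}{m}$ is read as zero for $m < 0$ or $m > n$. This is exactly the claimed bound.

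Next I would invoke the spectral sequence of the filtration constructed in Proposition \ref{prop4}: over a field this spectral sequence converges, with $E^2$ page the homology of the associated graded (which we just computed) and $E^\infty$ page the associated graded of $H_*(\CDP_*)$. Because each subsequent differential $d^r$ can only cancel pairs of generators, $\dim E^\infty_k \leq \dim E^2_k$, and over $\Z/2$ we have $\dim H_k(\CDP_*) = \dim E^\infty_k$. Combining these gives the desired inequality.

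The only point requiring a sanity check is that each of the filtrations in the proof of Proposition \ref{prop4} (by $A(D)$, $B(D)$, $\lvert \vec{N} \rvert$, the end generator $y$, and finally by $\lvert l(\vec{\lambda}) \rvert$) preserves the homological grading on $\CDP_*$, so that the dimension bound applies degree by degree rather than only to totals. Since every filtration grading is independent of the homological grading (Maslov index plus total length), this is immediate. There is no substantive obstacle: this corollary is a numerical repackaging of Proposition \ref{prop4}.
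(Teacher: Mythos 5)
Your proposal is correct and takes the same (indeed, the only natural) route: the paper states the Corollary immediately after Proposition \ref{prop4} with no written proof, relying implicitly on the standard fact that the rank of the homology of a filtered complex is bounded in each grading by the rank of the homology of the associated graded, together with the dimension count of $(\Z/2)^{2^n} \otimes (\Z/2)[U]$ in grading $k$, which you carry out correctly. The only cosmetic slip is that the page carrying the homology of the associated graded is usually labelled $E^1$, not $E^2$; this does not affect the argument.
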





In the proof of Proposition $\ref{prop2}$, we found a cocycle that detects the generator of $H_2(\CD_*)$. We will use a similar procedure to compute $H_0(\CDP_*)$ through $H_3(\CDP_*)$.

\begin{proof}[Proof of Theorem $\ref{prop8}$] \textbf{(k = 0)} This case is clear.

\textbf{(k = 1)} The $n$ generators of $H_1(\text{AssGr}(\CDP_*))$ are the triples 
\begin{align*}g_j := (c_{x^{\Id}}, \vec{e}_j, (1))\end{align*}
which are still cycles in $\CDP_1$ (because their initial and final reductions cancel). It will suffice to show that there exist $n$ $1$-cocycles $r_j$ such that $r_j(g_k) = 1$ if and only if $j = k$. Let $f_j$ be the $1$-cochain in $\CD_*$ such that $\delta f_j(D) = 1$ if and only if $D$ is the vertical annulus $V_j$ or the horizontal annulus $H_j$---$f_j$ exists by Proposition $\ref{prop2}$ because the $2$-cocycle which is $1$ on $V_j$ and $H_j$ and zero on every other index $2$ domain is a coboundary, since it is zero on the generator $U$ of $H_2(\CD_*)$. We can extend $f_j$ to $\CDP_*$ by setting it equal to zero on all triples $(c_x, N\vec{e}_j, (N))$. Let $N_j$ be the $1$-cocycle that is the value of $N_j$ in the triple $(D, \vec{N}, \vec{\lambda})$, and 
\begin{align*}r_j := N_j + f_j.\end{align*}
To show that $r_j$ is a cocycle, we consider all possible triples $(D, \vec{N}, \vec{\lambda})$ in grading $2$. If $N_j = 0$ and $D$ is not the annulus $V_j$ or $H_j$, then by definition $\delta r_j(D, \vec{N}, \vec{\lambda}) = 0$. If $N_j = 0$ and $D = V_j$ or $H_j$, then $\vec{N} = 0$ and $\delta r_j(D, 0, 0) = N_j(c_{x}, \vec{e}_j, (1)) + (f_j(R_1) + f_j(R_2)) = 1 + 1 = 0 \pmod{2}$, where $D = R_1 * R_2$ is the decomposition into rectangles. Finally, if $N_j = M > 0$, there are three cases:
\begin{itemize}

\item $D$ is a rectangle. In this case $\vec{N} = M\vec{e}_j$ and $\lambda_i = (M)$, so the initial and final reduction of $\lambda_i$ cancel, and the only other differential terms are removing $D$. If $D$ is a rectangle from $x$ to $y$, then $\delta r_i(D, M\vec{e}_j, (M)) = N_j(c_{x}, M\vec{e}_j, (M)) + N_j(c_{y}, M\vec{e}_j, (M)) = M + M = 0 \pmod{2}$.

\item Some $N_k > 0$, where $k \neq j$. Then $D$ must be a constant domain, and both $\lambda_j$ and $\lambda_k$ are length $1$ partitions, so their initial and final reductions all cancel.

\item $D$ is a constant domain and $\lambda_j = (M_1, M_2)$ is a length $2$ partition. In this case we have all of the type III and IV differentials, which gives
\begin{align*}&\delta r_j(c_x, (M + N)\vec{e}_j, (M, N)) \\&= N_j(c_x, M\vec{e}_j, (M)) + N_j(c_x, N\vec{e}_j, (N)) + N_j(c_x, (M + N)\vec{e}_j, (M + N)) \\&= M + N + (M + N) = 0 \pmod{2}\end{align*}

\end{itemize}






Therefore $r_j$ is a cocycle for each $j$, and by definition $r_j(g_j) = 1$ if and only if $j = k$, so the $g_j$ are in fact the generators of $H_1(\CDP_*)$.

\textbf{(k = 2)} We similarly consider $2$-cocycles. $(\Z/2)^{\binom{n}{2}}$ of the generators of $H_2(\text{AssGr}(\CDP_*))$ are the triples 
\begin{align*}g_{j, k} := (c_{x^{\Id}}, \vec{e}_j + \vec{e}_k, ((1), (1)))\end{align*}
which are similarly still cycles in $\CDP_2$. The final generator will be given by a slight modification $U'$ of 
$(U, 0, 0)$, where $U$ is the generator of $H_2(\CD_*)$. The boundary of $U$ in $\CDP_*$ contains only pairs of triples of the form
\begin{align*}(c_{x^j}, \vec{e}_j, (1)) \text{ and } (c_{y^j}, \vec{e}_j, (1))\end{align*}
corresponding to type II differentials on the annuli $A_j$ and $B_j$. Here $x^j$ and $y^j$ are the generators $[n23\dots(n-j)1(n-i+1)\dots(n-1)]$ and $[(n-j+1)23\dots(n-j)(n-j+2)\dots n1]$, respectively, as depicted in Figure $\ref{fig2}$. For each $j$, the generators $x^j$ and $y^j$ each have a planar domain (that is, a domain that does not intersect the topmost row or rightmost column of the grid), $D_{j, 1}$ and $D_{j, 2}$ respectively, from the identity generator $x^{\Id}$. From Figure $\ref{fig2}$, we see that $D_{j, 2}$ is the reflection of $D_{j, 1}$ about the diagonal from the bottom left to the top right of the grid, so that $(-D_{j, 1}) * D_{j, 2}$ is an even index planar domain from $x^j$ to $y^j$. This domain decomposes into an even number of planar rectangles $\pm R_{jk}$ (where each $R_{jk}$ is positive), so that adding each $(R_{jk}, \vec{e}_j, (1))$ to $(U, 0, 0)$ will cancel the rest of its boundary, making a cycle $U'$.

In the proof of Proposition $\ref{prop2}$, we used the $2$-cocycle $r$ which is $1$ on the rightmost vertical annulus and zero on every other $2$-chain. Extending $r$ to $\CDP_*$ by setting it equal to zero on every $2$-chain with $\lvert \vec{N} \rvert > 0$ still gives a cocycle, since $\CDP_*$ has no new ways to create an annulus in the boundary, and we still have that $r(U') = 1$, while all of the $r(g_{j, k}) = 0$. Now it suffices to find $r_{j, k}$ such that $r_{j, k}(U') = 0$ for all $j, k$, and $r_{j, k}(g_{l, m}) = 1$ if and only if $\{ l, m \} = \{ j, k \}$. Let $f_j$ be the $1$-cocycles defined in the proof of (1), and let
\begin{align*}f_{j}^k(R, \vec{N}, \vec{\lambda}) = N_k f_j(R)\end{align*}
where $R$ is a rectangle and $\vec{\lambda}$ has total length $1$ (and $f_j^k = 0$ on all other $2$-chains). Now let $N_j N_k$ be the $2$-cocycle that is the product of the values of $N_j$ and $N_k$ for a triple $(D, \vec{N}, \vec{\lambda})$, and let 
\begin{align*}r_{j, k} := N_j N_k + f_j^k + f_k^j.\end{align*}
To show that $r_{j, k}$ is a cocycle, we consider all possible triples $(D, \vec{N}, \vec{\lambda})$ in grading $3$. If $N_j = 0$ (respectively, $N_k = 0$) and $D$ does not contain the annulus $V_j$ or $H_j$ (respectively, $V_k$ or $H_k$), then by definition $\delta r_{j, k}(D, \vec{N}, \vec{\lambda}) = 0$. If $N_j = 0$, $N_k > 0$ (or vice versa), and $D$ contains $V_j$ or $H_j$, then $D = V_j$ or $H_j$ and all $N_l = 0$ for $l \neq k$, so that $\delta r_{j, k}(D, \vec{N}, \vec{\lambda}) = 0$ similarly to the proof of (1). Finally, if $N_j = M_j > 0$ and $N_k = M_k > 0$, there are three cases:
\begin{itemize}

\item $D$ is a rectangle. In this case $\vec{N} = M_j\vec{e}_j + M_k\vec{e}_k, \lambda_j = (M_j),$ and $\lambda_k = (M_k)$, so the initial and final reductions of $\lambda_j$ and $\lambda_k$ cancel, and the only other differential terms are removing $D$. If $D$ is a rectangle from $x$ to $y$, then $\delta r_{j, k}(D, \vec{N}, \vec{\lambda}) = N_jN_k(c_{x}, M_j\vec{e}_j + M_k\vec{e}_k, ((M_j), (M_k))) + N_iN_j(c_{y}, M_j\vec{e}_j + M_k\vec{e}_k, ((M_j), (M_k))) = M_i M_j + M_i M_j = 0 \pmod{2}$.

\item Some $N_l > 0$, where $l \neq j, k$. Then $D$ must be a constant domain, and all of $\lambda_j$, $\lambda_k$, and $\lambda_l$ are length $1$ partitions, so their initial and final reductions all cancel.

\item $D$ is a constant domain and $\lambda_j = (M_{j,1}, M_{j,2})$ is a length $2$ partition (or symmetrically, $\lambda_k = (M_{k,1}, M_{k,2})$). In this case the initial and final reductions of $\lambda_k$ cancel, but we have all of the type III and IV differentials of $\lambda_j$, which give
\begin{align*}&\delta r_{j, k}(c_x, (M_{j, 1} + M_{j, 2})\vec{e}_j + M_k\vec{e}_k, ((M_{j, 1}, M_{j, 2}), (M_k))) \\&= N_jN_k(c_x, M_{j, 1}\vec{e}_j + M_k\vec{e}_k, ((M_{j, 1}), (M_k))) \\&+ N_jN_k(c_x, M_{j, 2}\vec{e}_j + M_k\vec{e}_k, (M_{j, 2}), (M_k))) \\&+ N_jN_k(c_x, (M_{j, 1} + M_{j, 2})\vec{e}_j + M_k\vec{e}_k, ((M_{j, 1} + M_{j, 2}), (M_k))) \\&= M_k(M_{j, 1} + M_{j, 2} + (M_{j, 1} + M_{j, 2})) = 0 \pmod{2}\end{align*}

\end{itemize}

Therefore $r_{j, k}$ is a cocycle for all $j, k$, and by definition $r_{j, k}(U) = 0$. $U'$ only adds an even number of planar rectangles with partitions that can contribute to $f_j^k$ or $f_k^j$, no two of which can ever form an annulus, so $r_{j, k}(U') = 0$. Finally, by definition $r_{j, k}(g_{l, m}) = 1$ if and only if $\{ l, m \} = \{ j, k \}$, so these (along with $U'$) are in fact the generators of $H_2(\CDP_*)$.

\textbf{(k = 3)} $\binom{n}{3}$ of the generators of $H_3(\text{AssGr}(\CDP_*))$ are the triples 
\begin{align*}g_{j, k, l} := (c_{x^{\Id}}, \vec{e}_j + \vec{e}_k + \vec{e}_l, ((1), (1), (1)))\end{align*}
which are similarly still cycles in $\CDP_3$. The other $n$ generators are the triples $U_j'$ obtained from $U'$ by performing unit enlargements on $N_j$ and adding the triples $(R_{jk}, 2\vec{e}_j, (2))$ defined previously (for this fixed $j$). Let $V_{(n)}$ be the rightmost vertical annulus and define the cochain
\begin{align*}rr_j(D, \vec{N}, \vec{\lambda}) := \begin{cases}0 & D\text{ does not contain } V_{(n)} \\ r_j(D * -V_{(n)}, \vec{N}, \vec{\lambda}) & D\text{ contains } V_{(n)}\end{cases}\end{align*}
where $r_j$ is the $1$-cocycle from the proof of (1). To show that $rr_j$ are cocycles, we consider all $\delta rr_j(D, \vec{N}, \vec{\lambda})$ for triples $(D, \vec{N}, \vec{\lambda}) \in \CDP_4$. If $D$ does not contain $V_{(n)}$, then this quantity is zero by definition. If $D$ contains $V_{(n)}$, then its Maslov index is at least 2, so that we have the following cases:
\begin{itemize}

\item $D$ is an index $4$ domain. In this case, $\vec{N} = 0$, so let $E = D * (-V_{(n)})$. If $E$ is also an annulus $V_k$, then
\begin{align*}\delta rr_j(D, \vec{N}, \vec{\lambda}) &= rr_j(A_1 * V_{(n)}, 0, 0) + rr_j(A_2 * V_{(n)}, 0, 0) + rr_j(E * B_1, 0, 0) + rr_j(E * B_2, 0, 0) \\&+ rr_j(V_{(n)}, \vec{e}_k, (1)) + rr_j(E, \vec{e}_n, (1)) \text{ where } A_1 * A_2 = E, B_1 * B_2 = V_{(n)} \\&= r_j(A_1, 0, 0) + r_j(A_2, 0, 0) + r_j(c_x, \vec{e}_k, (1)) \\&+ (r_j(B_1, 0, 0) + r_j(B_2, 0, 0) + r_j(c_x, \vec{e}_n, (1))) \: (\text{added iff } k = n) = 0\end{align*}
by definition of $r_j$ (since this is just $\delta r_j(V_k)$ with possibly $\delta r_j(V_{(n)})$ added if $k = n$). If $E$ is not an annulus, we similarly have
\begin{align*}\delta rr_j(D, \vec{N}, \vec{\lambda}) &= rr_j(A_1 * V_{(n)}, 0, 0) + rr_j(A_2 * V_{(n)}, 0, 0) + rr_j(E * B_1, 0, 0) + rr_j(E * B_2, 0, 0) \\&+ rr_j(E, \vec{e}_n, (1)) \text{ where } A_1 * A_2 = E, B_1 * B_2 = V_{(n)} \\&= r_j(A_1, 0, 0) + r_j(A_2, 0, 0) + r_j(c_x, \vec{e}_k, (1)) = 0\end{align*}

\item $D = R * V_{(n)}$ is an index $3$ domain, where $R$ is a rectangle from a generator $x$ to a generator $y$. In this case, $\vec{N} = N\vec{e}_k$ and $\vec{\lambda} = \lambda_k = (N)$. Suppose that in the domain $D$, $V_{(n)} = A * B$, and $R = B$ (or symmetrically, $R = A$). In this case,
\begin{align*}\delta rr_j(D, \vec{N}, \vec{\lambda}) &= rr_j(R * A, N \vec{e}_k, (N)) + rr_j(A * R, N \vec{e}_k, (N)) + rr_j(R, N \vec{e}_k + e_n, ((N), (1))) \\&= r_j(c_x, N \vec{e}_k, (N)) + r_j(c_y, N \vec{e}_k, (N)) = 0 \text{ by definition of } r_j\end{align*}
and if $R$ is not $A$ or $B$, we similarly have
\begin{align*}\delta rr_j(D, \vec{N}, \vec{\lambda}) &= rr_j(R * A, N \vec{e}_k, (N)) + rr_j(R * B, N \vec{e}_k, (N)) + rr_j(V_{(n)}, N \vec{e}_k, (N)) \: (\text{at the generator } x) \\&+ rr_j(V_{(n)}, N \vec{e}_k, (N)) \: (\text{at } y) + rr_j(R, N \vec{e}_k + \vec{e}_n, ((N), (1))) \\&= r_j(c_x, N \vec{e}_k, (N)) + r_j(c_y, N \vec{e}_k, (N)) = 0\end{align*}

\item $D$ is an index $2$ domain. In this case, $D = V_{(n)}$, $\lvert \vec{\lambda} \rvert = 2$, and $\delta rr_j(D, \vec{N}, \vec{\lambda}) = \delta r_j(c_x, \vec{N}, \vec{\lambda})$ since the type I and II terms cannot possibly have an annulus---note that we previously showed this expression to be zero in showing that $r_j$ is a cocycle.

\end{itemize}
Therefore $rr_j$ is a cocycle, and by construction $rr_j(U_k') = 1$ if and only if $k = j$, and all of the $rr_j(g_{k, l, m}) = 0$. It remains to find cocycles $r_{j, k, l}$ such that $r_{j, k, l}(U_m') = 0$ for all $j, k, l, m$, and $r_{j, k, l}(g_{m, p, q}) = 1$ if and only if $\{ m, p, q \} = \{ j, k, l \}$. Let $f_j$ be the $1$-cocycles defined in the proof of (1), and let
\begin{align*}f_{j}^{k, l}(R, \vec{N}, \vec{\lambda}) = N_k N_l f_j(R)\end{align*}
where $R$ is a rectangle and $\lambda_k, \lambda_l$ are both length $1$ partitions (and $f_j^{k, l} = 0$ on all other $3$-chains). Now let $N_j N_k N_l$ be the $3$-cocycle that is the product of the values of $N_j, N_k,$ and $N_l$ for a triple $(D, \vec{N}, \vec{\lambda})$, and let 
\begin{align*}r_{j, k, l} = N_j N_k N_l + f_j^{k, l} + f_k^{j, l} + f_l^{j, k}.\end{align*}
To show that $r_{j, k, l}$ is a cocycle, we consider all possible triples $(D, \vec{N}, \vec{\lambda})$ in grading $4$. If $N_j = 0$ (respectively, $N_k = 0$ or $N_l = 0$) and $D$ does not contain the annulus $V_j$ or $H_j$ (respectively, $V_k$ or $H_k$, or $V_l$ or $H_l$), then by definition $\delta r_{j, k, l}(D, \vec{N}, \vec{\lambda}) = 0$. If $N_j = 0$, $N_k > 0$, and $N_l > 0$ (or symmetrically, any other case where exactly one is zero), and $D$ contains $V_j$ or $H_j$, then $D = V_j$ or $H_j$ and all $N_m = 0$ for $m \neq k, l$, so that $\delta r_{j, k, l}(D, \vec{N}, \vec{\lambda}) = 0$ similarly to the proof of (1). Finally, if $N_j = M_j > 0, N_k = M_k > 0,$ and $N_l = M_l > 0$, there are three cases:
\begin{itemize}

\item $D$ is a rectangle. In this case $\vec{N} = M_j\vec{e}_j + M_k\vec{e}_k + M_l\vec{e}_l, \lambda_j = (M_j), \lambda_k = (M_k)$ and $\lambda_l = (M_l)$, so the initial and final reductions of $\lambda_j, \lambda_k$ and $\lambda_l$ cancel, and the only other differential terms are removing $D$. If $D$ is a rectangle from $x$ to $y$, then 
\begin{align*}\delta r_{j, k, l}(D, \vec{N}, \vec{\lambda}) &= N_jN_kN_l(c_{x}, M_j\vec{e}_j + M_k\vec{e}_k + M_l\vec{e}_l, ((M_j), (M_k), (M_l))) \\&+ N_jN_kN_l(c_{y}, M_j\vec{e}_j + M_k\vec{e}_k + M_l\vec{e}_l, ((M_j), (M_k), (M_l))) \\&= M_j M_k M_l + M_j M_k M_l = 0 \pmod{2}.\end{align*}

\item Some $N_m > 0$, where $m \neq j, k, l$. Then $D$ must be a constant domain, and all of $\lambda_j$, $\lambda_k$, $\lambda_l$, and $\lambda_m$ are length $1$ partitions, so their initial and final reductions all cancel.

\item $D$ is a constant domain and $\lambda_j = (M_{j,1}, M_{j,2})$ is a length $2$ partition (or symmetrically, $\lambda_k = (M_{k,1}, M_{k,2})$ or $\lambda_l = (M_{l,1}, M_{l,2})$). In this case the initial and final reductions of $\lambda_k$ and $\lambda_l$ cancel, but we have all of the type III and IV differentials of $\lambda_j$, which give
\begin{align*}&\delta r_{j, k, l}(c_x, (M_{j, 1} + M_{j, 2})\vec{e}_j + M_k\vec{e}_k + M_l\vec{e}_l, ((M_{j, 1}, M_{j, 2}), (M_k), (M_l))) \\&= N_jN_kN_l(c_x, M_{j, 1}\vec{e}_j + M_k\vec{e}_k + M_l\vec{e}_l, ((M_{j, 1}), (M_k), (M_l))) \\&+ N_jN_kN_l(c_x, M_{j, 2}\vec{e}_j + M_k\vec{e}_k + M_l\vec{e}_l, (M_{j, 2}), (M_k), (M_l))) \\&+ N_jN_kN_l(c_x, (M_{j, 1} + M_{j, 2})\vec{e}_j + M_k\vec{e}_k + M_l\vec{e}_l, ((M_{j, 1} + M_{j, 2}), (M_k), (M_l))) \\&= M_kM_l(M_{j, 1} + M_{j, 2} + (M_{j, 1} + M_{j, 2})) = 0 \pmod{2}\end{align*}

\end{itemize}

Therefore $r_{j, k, l}$ are cocycles, satisfying $r_{j, k, l}(F_{m}'') = 0$ for all $j, k, l, m$, and $\\r_{j, k, l}(c_{x^{\Id}}, \vec{e}_m + \vec{e}_p + \vec{e}_q, ((1), (1))) = 1$ if and only if $\{ m, p, q \} = \{ j, k, l \}$.\end{proof}

\section{Sign Assignments for Domains with Partitions}

Similarly to Section $3$, we find the criteria that the coboundary of a sign assignment for $\CDP_*$ must satisfy. 

\begin{definition}A sign assigment for $\CDP_*$ is a $1$-cochain $s$ on $\CDP_*$ such that
\begin{enumerate}

\item $\delta s(D, 0, 0) = 1$ for any index $2$ domain $D$ that is not an annulus.


\item $\delta s(V, 0, 0) = 1$ for any vertical annulus $V$, and $\delta s(H, 0, 0) = 0$ for any horizontal annulus $H$.

\item $\delta s(R, (0, N\vec{e}_j, (N)) = 0$ for any rectangle $R$, any $N > 0$, and any $j$.

\item $\delta s(c_{x}, N\vec{e}_j + M\vec{e}_k, ((N), (M))) = 0$ for any constant domain $c_x$, any $N, M > 0$, and any $j, k$.

\item $\delta s(c_{x}, (N + M)\vec{e}_j, ((N, M))) = 0$ for any constant domain $c_x$, any $N, M > 0$, and any $j$.

\end{enumerate} \label{def7}\end{definition}

\begin{proof}[Proof of Theorem \ref{prop6}] Let $T$ be the $2$-cochain with values given by (1)--(5) of Definition $\ref{def7}$. To see that $T$ is a cocycle, we evaluate $\delta T$ on all triples $(D, \vec{N}, \vec{\lambda})$ in grading $3$. These are given by
\begin{itemize}

\item $(D, 0, 0)$ where $D$ is an index $3$ domain. The proof of Lemma $\ref{lem3}$ shows that the contributions to $\delta T$ by Type I differential terms all cancel. If $D$ does not contain an annulus, these are all the differential terms. If $D$ does contain an annulus $A = H_j$ or $V_j$, we can write $D = R * A$ for a rectangle $R$, so that the type II differential term gives $(R, (0, \vec{e}_j, (1))$, which does not contribute to $\delta T$ by (3).

\item $(D, N\vec{e}_j, (N))$ where $D$ is an index $2$ domain. Here the initial and final reduction of the partition both give $(D, 0, 0)$ so their contributions to $\delta T$ cancel. The decompositions of $D$ into rectangles do not contribute to $\delta T$ by condition (3), and again if $D$ is not an annulus, then these are the only other boundary terms. If $D$ is an annulus, then either $D = H_j, D = V_j$, or $D$ is some other annulus $H_k$ or $V_k$. In the latter case, the type II differential term gives $(c_x, N\vec{e}_j + \vec{e}_k, ((N), (1)))$ which does not contribute to $\delta T$ by (4). In the former case, the type II differential gives two terms, $(c_x, (N + 1)\vec{e}_j), (1, N))$ and $(c_x, (N + 1)\vec{e}_j, (N, 1))$, which do not contribute to $\delta T$ by (5).

\item $(R, \vec{N}, \vec{\lambda})$ where $R \in \mathscr{D}^+(x, y)$ is a rectangle and $\vec{\lambda}$ has total length $2$. Here the type $I$ differential removes $R$ two ways, which leaves either $(c_x, M\vec{e}_j + N\vec{e}_k, ((M), (N)))$ (and the corresponding term for $c_y$, which do not contribute to $\delta T$ by (4)), or $(c_x, (M + N)\vec{e}_j), (M, N))$ (and the corresponding term for $c_y$, which do not contribute by (5)). All type $III$ and $IV$ terms do not contribute by condition (3). Since $R$ cannot possibly contain an annulus, there are no further terms so $\delta T(R, \vec{N}, \vec{\lambda}) = 0$.

\item $(c_x, \vec{N}, \vec{\lambda})$ where $c_x$ is a constant domain and $\vec{\lambda}$ has total length $3$. None of these terms contribute to $\delta T$ by (4) and (5).






\end{itemize}
Hence $T$ is a cocycle, so it remains to show $T$ is zero on every generator of $H_2(\CDP_*)$ listed in the proof of Theorem $\ref{prop8}$. By definition, every $T(c_{x}, \vec{e}_j + \vec{e}_k, ((1), (1))) = 0$. Also, $T(U, 0, 0) = 0$ by Lemma $\ref{lem2}$, so $T(U') = 0$ by condition (3), since these are the only types of triples added to $(U, 0, 0)$. Therefore $T$ must be the zero cocycle by Theorem $\ref{prop8}$, so $T = \delta s$ for some $s$. The values $s_j$ uniquely determine the $H^1(\CDP_*)$ class of $s$ by Theorem $\ref{prop8}$, so at that point $s$ is unique up to gauge equivalence (like sign assignments for $\CD_*$).\end{proof}

There are two types of triples in grading $1$---rectangles with no partitions and constant domains with a length $1$ partition. By uniqueness, the sign of a rectangle with no partition in $\CDP_*$ agrees with the sign of that rectangle in $\CD_*$, so it remains to compute the signs of constant domains with a length $1$ partition.

\begin{prop}For any constant domain $c_x$ and any $N > 0$,
\begin{align*}s(c_x, N\vec{e}_j, (N)) = Ns_j \pmod 2\end{align*}\label{prop7}\end{prop}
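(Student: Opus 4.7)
The plan is to exploit the defining conditions (3) and (5) from Definition 4.3 to reduce the value $s(c_x, N\vec{e}_j, (N))$ to a quantity depending only on $N$ and $s_j$. The argument proceeds in two steps: first show that the value is independent of $x$, and then establish additivity in $N$.

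For the first step, I would pick any rectangle $R \in \mathscr{D}^+(x, y)$ and compute the boundary of the triple $(R, N\vec{e}_j, (N))$ in $\CDP_*$. The type I terms contribute $(c_x, N\vec{e}_j, (N)) + (c_y, N\vec{e}_j, (N))$; there are no type II terms (a rectangle contains no annulus); no type III terms (the partition $(N)$ has length one, so admits no elementary coarsening); and the initial and final reductions of $(N)$ each give $(R, 0, 0)$, so these type IV terms cancel mod $2$. Condition (3) of Definition 4.3 then reads
\begin{align*}
0 = \delta s(R, N\vec{e}_j, (N)) = s(c_x, N\vec{e}_j, (N)) + s(c_y, N\vec{e}_j, (N)).
\end{align*}
Since any two generators of $\G$ are connected by a sequence of rectangles, this yields $s(c_x, N\vec{e}_j, (N)) = s(c_{x^{\Id}}, N\vec{e}_j, (N))$ for every $x$; denote this common value by $a_N$.

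For the second step, for any $N, M > 0$ I would apply condition (5) to the triple $(c_{x^{\Id}}, (N+M)\vec{e}_j, (N, M))$. Since $c_{x^{\Id}}$ is constant, there are no type I or type II boundary terms. The unique elementary coarsening produces the type III term $(c_{x^{\Id}}, (N+M)\vec{e}_j, (N+M))$, while the initial and final reductions produce the type IV terms $(c_{x^{\Id}}, M\vec{e}_j, (M))$ and $(c_{x^{\Id}}, N\vec{e}_j, (N))$ respectively. Condition (5) then gives $a_{N+M} + a_N + a_M \equiv 0 \pmod 2$, i.e. $a_{N+M} \equiv a_N + a_M \pmod 2$.

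With the base case $a_1 = s_j$ from the definition of $s_j$, a one-line induction $a_N = a_{N-1} + a_1 \equiv (N-1)s_j + s_j = N s_j \pmod 2$ finishes the proof. There is no substantial obstacle here: conditions (3) and (5) of Definition 4.3 were designed precisely so that this bookkeeping goes through cleanly, and the only mildly subtle point is the first reduction, which uses that the grid generators are connected by rectangles (already implicit in the discussion of the partial order $\leq$ in Section 2).
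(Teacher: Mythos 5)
Your proposal is correct and follows essentially the same route as the paper: use condition (3) on $\delta s(R, N\vec{e}_j, (N))$ to show the value is independent of the generator, then use condition (5) on a length-two partition together with the base case $s_j = s(c_{x^{\Id}}, \vec{e}_j, (1))$ to induct on $N$. The only cosmetic difference is that you derive the general additivity $a_{N+M} = a_N + a_M$ from the partition $(N, M)$, whereas the paper applies the same identity directly to the partition $(1, N-1)$.
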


\begin{proof}We first show that the sign is independent of the generator $x$. Let $R \in \mathscr{D}^+(x, y)$ be a rectangle. By (3) of Definition $\ref{def7}$,
\begin{align*}0 = \delta s(R, N\vec{e}_j, (N)) = s(c_x, N\vec{e}_j, (N)) + s(R, 0, 0) + s(R, 0, 0) + s(c_y, N\vec{e}_j, (N))\end{align*}
so that $s(c_x, N\vec{e}_j, (N)) = s(c_y, N\vec{e}_j, (N))$, and given any domain from $x$ to $y$, we find a decomposition into rectangles and repeatedly apply this equation. Therefore we can now assume without loss of generality that $x = x^{\Id}$. We will use the uniqueness of $s$ up to the values $s_j$ to proceed by induction on $N$. The base case is clear, and by (5) of Definition $\ref{def7}$ we must have that
\begin{align*}0 = \delta s(c_x, N\vec{e}_j, (1, N-1)) &= s(c_x, \vec{e}_j, (1)) + s(c_x, (N - 1)\vec{e}_j), (N - 1)) + s(c_x, N\vec{e}_j, (N)) \\&= s_j + (N - 1)s_j \pmod 2\end{align*}
by the inductive hypothesis, so that $s(c_x, N\vec{e}_j, (N)) = Ns_j \pmod 2$, which completes the induction.\end{proof}

\begin{remark}It would suffice by uniqueness to define a sign assignment on $\CDP_*$ by defining a sign assignment on $\CD_*$ and extending it by Proposition $\ref{prop7}$. Doing so would give another proof of Proposition $\ref{prop6}$.\end{remark}

Again, now that we have a sign assignment $s$, we can extend $\CDP_*$ to $\Z$ coefficients. 
As in $\CD_*$, the sign associated to breaking off a rectangle is the sign of the rectangle $s(R)$ given by the sign assignment. We now describe the sign of the other differential terms.

\begin{definition}Let $s$ be a sign assignment for $\CDP_*$.

\begin{itemize}

\item Given an ordered partition $\lambda$ and the unit enlargement $\lambda' = (\lambda_1, ..., \lambda_{k - 1}, 1, \lambda_k, ..., \lambda_m)$, the sign of the unit enlargement is 
\begin{align*}s(\lambda, \lambda') = k + 1 \pmod 2.\end{align*}

\item Given an ordered partition $\lambda$ and the elementary coarsening \\$\lambda' = (\lambda_1, ..., \lambda_{k - 1}, \lambda_k + \lambda_{k + 1}, \lambda_{k + 2}, ..., \lambda_m)$, the sign of the elementary coarsening is
\begin{align*}s(\lambda, \lambda') = k \pmod 2.\end{align*}

\item Given an ordered partition $\lambda = (\lambda_1, ..., \lambda_m)$ and its initial reduction $\lambda'$, the sign of the reduction is given by
\begin{align*}s(\lambda, \lambda') = \lambda_1 s_j \pmod 2\end{align*}
and the sign of its final reduction is given by the same expression, with $\lambda_m$ replacing $\lambda_1$.

\end{itemize}\label{def9}\end{definition}

\begin{definition}The complex of positive domains with partitions $\CDP_* = \CDP_*(\G; \Z)$ is freely generated by triples of the form $D, \vec{N}, \vec{\lambda}$, where
\begin{itemize}

\item $D \in \mathscr{D}^+(x, y)$ is a positive domain.

\item $\vec{N} \in \N^{n}$ is an $n$-tuple of nonnegative integers, $\vec{N} = (N_1, \dots, N_n)$.

\item $\vec{\lambda} = (\lambda_1, \dots, \lambda_n)$ is an $n$-tuple of ordered partitions, where $\lambda_j = (\lambda_{j, 1}, \dots, \lambda_{j, m_j})$ is an ordered partition of $N_j$.

\end{itemize}
The grading of $(D, \vec{N}, \vec{\lambda})$ is given by the Maslov index of $D$ plus the sum of the lengths of the $\lambda_j$ (which is referred to as the total length of $\vec{\lambda}$). The differential is given by four terms, $\partial = \partial_1 + \partial_2 + \partial_3 + \partial_4$, where

\begin{align*}\partial_1(D, \vec{N}, \vec{\lambda}) &= \sum\limits_{R * E = D} (-1)^{s(R)}(E, \vec{N}, \vec{\lambda}) + (-1)^{\mu(D)}\sum\limits_{E * R = D} (-1)^{s(R)}(E, \vec{N}, \vec{\lambda}) \\ \partial_2(D, \vec{N}, \vec{\lambda}) &= (-1)^{\mu(D)}\sum\limits_{j = 1}^{n} (-1)^{l(\lambda_1) + \dots + l(\lambda_{j - 1})} \sum\limits_{D = E * H_j \text{ (horizontal)}} (-1)^{1 + s(\lambda_j, \lambda_j')} \sum\limits_{\lambda_j' \in \text{UE}(\lambda_j)} (E, \vec{N} + \vec{e}_j, \vec{\lambda}') \\&+ (-1)^{\mu(D)}\sum\limits_{j = 1}^{n} (-1)^{l(\lambda_1) + \dots + l(\lambda_{j - 1})} \sum\limits_{D = E * V_j \text{ (vertical)}} (-1)^{s(\lambda_j, \lambda_j')} \sum\limits_{\lambda_j' \in \text{UE}(\lambda_j)} (E, \vec{N} + \vec{e}_j, \vec{\lambda}') \\ \partial_3(D, \vec{N}, \vec{\lambda}) &= (-1)^{\mu(D)}\sum\limits_{j = 1}^{n} (-1)^{l(\lambda_1) + \dots + l(\lambda_{j - 1})} \sum\limits_{\lambda_j' \in \text{EC}(\lambda_j)} (-1)^{s(\lambda_j, \lambda_j')} (D, \vec{N}, \vec{\lambda}') \\ \partial_4(D, \vec{N}, \vec{\lambda}) &= (-1)^{\mu(D)}\sum\limits_{j = 1}^{n} (-1)^{l(\lambda_1) + \dots + l(\lambda_{j - 1})} \sum\limits_{\lambda_j' \in \text{IR}(\lambda_j)} (-1)^{s(\lambda_j, \lambda_j')} (D, \vec{N} - \lambda_{j, 1}\vec{e}_j, \vec{\lambda}') \\&+ (-1)^{\mu(D)}\sum\limits_{j = 1}^{n} (-1)^{l(\lambda_1) + \dots + l(\lambda_{j})} \sum\limits_{\lambda_j' \in \text{FR}(\lambda_j)} (-1)^{s(\lambda_j, \lambda_j')} (D, \vec{N} - \lambda_{j, m_j}\vec{e}_j, \vec{\lambda}')\end{align*}\label{def8}\end{definition}

\begin{remark}In the case that all $s_j = 0$, these signs agree with the signs of \cite[Definitions 4.1-4.3]{MS}, with the exception of the type II differential.\end{remark}

\begin{lemma}$(\CDP_*, \partial)$ is a chain complex.\label{lem6}\end{lemma}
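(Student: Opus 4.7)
The plan is to verify $\partial^2 = 0$ by decomposing $\partial = \partial_1 + \partial_2 + \partial_3 + \partial_4$ and grouping the sixteen resulting terms into the cancellation identities from the proof of Lemma \ref{lem5}: namely $\partial_1^2 = 0$, $\partial_2^2 = 0$, $\partial_3^2 = 0$, $\partial_1\partial_k + \partial_k\partial_1 = 0$ for $k \in \{2, 3, 4\}$, $\partial_2\partial_3 + \partial_3\partial_2 = 0$, $\partial_2\partial_4 + \partial_4\partial_2 = 0$, and $\partial_4^2 + \partial_3\partial_4 + \partial_4\partial_3 = 0$. The first identity is Lemma \ref{lem4}, and the underlying pairings of inputs and outputs in each remaining identity are exactly as in the proof of Lemma \ref{lem5}; what remains is to check that the signs in each pair are opposite.

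Sign tracking has four sources: the Maslov-index parity $(-1)^{\mu(D)}$ appearing in $\partial_2, \partial_3, \partial_4$; the Koszul factor $(-1)^{l(\lambda_1) + \cdots + l(\lambda_{j-1})}$ whenever the operation acts on the $j$-th partition; the rectangle/annulus signs governed by Definition \ref{def2}; and the partition signs of Definition \ref{def9}. For the mixed identities $\partial_1\partial_k + \partial_k\partial_1 = 0$, the key point is that $\partial_1$ lowers the Maslov index by $1$, flipping the $(-1)^{\mu(D)}$ factor in the subsequent application of $\partial_k$; combined with the asymmetric $(-1)^{\mu(D)}$ on the ``right-composition'' term of $\partial_1$, this yields the desired cancellation, with the Square Rule and Annuli clause absorbing the rectangle/annulus sign contributions. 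The identities $\partial_2^2 = 0$ and $\partial_3^2 = 0$ rely on the symmetry of swapping the two operations, where the vertical/horizontal distinction in the $\partial_2$ sign (the extra $+1$ for horizontal annuli) is calibrated exactly to match the Annuli clause so that $\partial_2$ applied twice to annuli through the same marking yields opposite signs.

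The main obstacle will be the triple identity $\partial_4^2 + \partial_3\partial_4 + \partial_4\partial_3 = 0$, since most pairs of operations there commute in the obvious sense but certain boundary cases do not. Specifically, a coarsening at position $1$ followed by an initial reduction has no commuting partner; it must cancel with the composition of two consecutive initial reductions on the same partition, and symmetrically at the tail. Here Proposition \ref{prop7} enters: the two consecutive initial reductions of $\lambda_j$ combine the signs $(-1)^{\lambda_{j,1} s_j}$ and $(-1)^{\lambda_{j,2} s_j}$ into $(-1)^{(\lambda_{j,1} + \lambda_{j,2}) s_j}$, while the coarsen-then-reduce path contributes $(-1)^{1 + (\lambda_{j,1} + \lambda_{j,2}) s_j}$ (the additional $+1$ being the coarsening sign $k \bmod 2$ at $k=1$). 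The remaining Koszul and Maslov factors are identical in both paths since $\mu(c_x) = 0$ and the lengths $l(\lambda_i)$ for $i < j$ are unchanged, so the two contributions are opposite and cancel; the symmetric calculation at the end of the partition handles final reductions.
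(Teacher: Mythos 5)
Your proposal is correct and follows essentially the same case analysis as the paper; in particular your sign check for the triple identity $\partial_4^2 + \partial_3\partial_4 + \partial_4\partial_3 = 0$ (coarsening at the initial position followed by initial reduction versus two consecutive initial reductions, and the symmetric final-reduction case) matches the paper's computation, including the extra $(-1)^{l(\lambda_j)}$ on final reductions and the use of Definition~\ref{def9} for the $\lambda_{j,i}s_j$ factors.

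One organizational difference worth flagging: the paper does not treat $\partial_1^2 = 0$ and $\partial_2\partial_4 + \partial_4\partial_2 = 0$ as separate identities. Instead it groups them into a single relation $\partial_1^2 + \partial_2\partial_4 + \partial_4\partial_2 = 0$, and for a domain $D = A*E = E*A$ with $A = R*S$ an annulus through $O_j$, it pairs the two ways of peeling $A$ off rectangle by rectangle (the annulus terms of $\partial_1^2$) against the two compositions unit-enlargement-then-reduction (the annulus terms of $\partial_4\partial_2$). Your organization --- invoking Lemma~\ref{lem4} for $\partial_1^2 = 0$ and handling $\partial_4\partial_2$ internally --- is equally valid, and if anything more robust, since the internal $\partial_4\partial_2$ cancellation (sign $(-1)^{s_j}$ for UE-at-front-then-IR against $(-1)^{1+s_j}$ for UE-at-end-then-FR) holds regardless of the value of $s_j$, whereas the paper's cross-pairing is written with the $s_j$ contribution suppressed. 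You should, however, make this last sign check for $\partial_2\partial_4 + \partial_4\partial_2$ explicit rather than leaving it to the blanket ``same pairings, check signs'' remark, since that is precisely the cancellation the paper singles out as sensitive to the new type~II signs.
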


\begin{proof}The proof is similar to that of \cite[Lemma 4.4]{MS}, which is the same case analysis of Lemma $\ref{lem5}$, except where we keep track of signs. In the case of
\begin{align*}(\partial_4)^2 + \partial_3 \partial_4 + \partial_4 \partial_3 = 0\end{align*}
we still have all but two cases cancelling in pairs by reversing the order of the two operations. These two cases are
\begin{itemize}

\item Two initial reductions and an elementary coarsening at the beginning, followed by an initial reduction. The former has sign
\begin{align*}\lambda_1 s_j + \lambda_2 s_j \pmod 2\end{align*}
and the latter has sign
\begin{align*}1 + (\lambda_1 + \lambda_2)s_j \pmod 2\end{align*}
which is the opposite sign.

\item Two final reductions and an elementary coarsening at the end, followed by a final reduction. Note that final reductions have an extra sign of $l(\lambda_j)$ compared to initial reductions, so that including this extra sign, the former has sign
\begin{align*}&l(\lambda_j) + (l(\lambda_j) - 1) + \lambda_m s_j + \lambda_{m - 1} s_j \pmod 2\end{align*}
and the latter has sign
\begin{align*}&(l(\lambda_j) - 1) + (l(\lambda_j) - 1) + (\lambda_{m - 1} + \lambda_m)s_j \pmod 2\end{align*}
which is the opposite sign.

\end{itemize}

Finally, although we still have $\partial_2 \partial_4 + \partial_4 \partial_2 = 0$, the change to the sign of the type II differential gives a new set of cancellations
\begin{align*}\partial_1^2 + \partial_2 \partial_4 + \partial_4 \partial_2 = 0\end{align*}
For this case, suppose $D = A * E = E * A$ is the domain where $A = R * S$ is an annulus.
\begin{itemize}

\item If $A$ is a vertical annulus $V_j$, then $s(R) + s(S) = 1$, so that removing $R$ then $S$ from the front has sign $1$, while the type II differential that produces a unit enlargement at the front of $\lambda_j$ followed by the initial reduction of $\lambda_j$ has sign $0$. Also, removing $S$ then $R$ from the back has sign $0$ (since the Maslov index of the domain decreases once), while the type II differential that produces a unit enlargement at the end of $\lambda_j$ followed by the final reduction of $\lambda_j$ has sign $l(\lambda_j) + 1 + l(\lambda_j) = 1 \pmod 2$.

\item If $A$ is a horizontal annulus $H_j$, then $s(R) + s(S) = 0$, so that removing $R$ then $S$ from the front has sign $0$, while the type II differential that produces a unit enlargement at the front of $\lambda_j$ followed by the initial reduction of $\lambda_j$ has sign $1$. Also, removing $S$ then $R$ from the back has sign $1$ (since the Maslov index of the domain decreases once), while the type II differential that produces a unit enlargement at the end of $\lambda_j$ followed by the final reduction of $\lambda_j$ has sign $l(\lambda_j) + l(\lambda_j) = 0 \pmod 2$.

\end{itemize}\end{proof}

The analogue of Proposition $\ref{prop4}$ also holds over $\Z$.

\begin{prop}There is a filtration on $\CDP_*$ such that the associated graded has homology $\Z^{2^n} \otimes \Z[U]$. In particular, $H_k(\CDP_*)$ has rank at most
\begin{align*}\sum\limits_{l = 0}^{\lfloor k/2 \rfloor} \binom{n}{k - 2l}\end{align*}\label{prop9}\end{prop}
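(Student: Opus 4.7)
The plan is to mimic the proof of Proposition \ref{prop4} almost verbatim, replacing $\Z/2$ by $\Z$ and tracking signs throughout. The filtration structure itself is unchanged: filter $\CDP_*$ successively by $A(D) \in \N^n$ (coefficients in the rightmost column), then (after reducing without loss of generality to the case where the minimum of $A$ is attained at the top-right corner) by $B(D) \in \N^{n-1}$ (coefficients in the topmost row excluding the top-right corner), then by $|\vec{N}|$, and finally by the end generator $y$ under the Bruhat-like order. Because the signs attached to the differential terms in Definition \ref{def8} only rescale each summand and do not alter which underlying generator it maps to, each filtration remains well-defined over $\Z$ exactly as before.

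In the associated graded $\CDP_*^{a,b,y,\vec{N}}$ only type I terms preserving $y$ and type III terms survive. When $a \neq (l,l,\ldots,l)$, $b \neq 0$, or $y \neq x^{\Id}$, a further filtration by the total length $|l(\vec{\lambda})|$ kills type III terms, and the resulting complex is a direct sum of copies of the $\Z$-version of $\CD_*^{a,b,y}$ from the proof of Proposition \ref{prop3}; these were shown there to be acyclic over $\Z$.

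In the remaining case $a = (l,l,\ldots,l)$, $b = 0$, $y = x^{\Id}$, every generator has the form $(lV_{(n)}, \vec{N}, \vec{\lambda})$ and only type III differentials remain. Since type III operates on a single $\lambda_j$, the complex decomposes, with the Koszul signs $(-1)^{l(\lambda_1) + \cdots + l(\lambda_{j-1})}$ from Definition \ref{def8}, as a signed tensor product $\bigotimes_{j=1}^{n} C^{(N_j)}$, where $C^{(N)}$ is the complex on ordered partitions of $N$ with differential summing elementary coarsenings weighted by $(-1)^k$ for merging at position $k$. When $N \in \{0,1\}$, $C^{(N)} = \Z$ is concentrated in a single degree. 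When $N \geq 2$, I claim $C^{(N)}$ is acyclic, with explicit chain contraction given by the operator $h$ that sends $(\lambda_1, \ldots, \lambda_m) \mapsto (1, \lambda_1 - 1, \lambda_2, \ldots, \lambda_m)$ when $\lambda_1 \geq 2$ and to $0$ otherwise (including on the one-part partition $(1)$). A direct bookkeeping of Koszul signs shows that $dh + hd = -\mathrm{id}$: the elementary coarsening at position $1$ of $h(\lambda)$ recovers $\lambda$ with sign $-1$, while all other coarsenings of $h(\lambda)$ pair off against $h$ applied to the coarsenings of $\lambda$ after a reindexing shift.

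Finally, the tensor product $\bigotimes_j C^{(N_j)}$ contributes $\Z$ when every $N_j \in \{0,1\}$ and is acyclic otherwise. Summing over the $2^n$ admissible $\vec{N}$ and over $l \geq 0$ (a generator with $k$ nonzero $N_j$'s sits in grading $2l + k$) yields the associated graded homology $\Z^{2^n} \otimes \Z[U]$. The rank bound on $H_k(\CDP_*)$ follows by standard convergence of the filtration spectral sequence, counting $\binom{n}{k-2l}$ generators in grading $k$ from the summand contributing $2l$ to the $\Z[U]$ factor. The main obstacle is verifying the chain contraction $dh + hd = -\mathrm{id}$ for $C^{(N)}$ cleanly; this requires careful sign bookkeeping on the elementary coarsening operations, but is otherwise routine.
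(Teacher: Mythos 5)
Your proposal is correct and follows the same route as the paper, whose proof of this proposition simply declares it identical to the $\Z/2$ argument of Proposition \ref{prop4}: the same tower of filtrations by $A(D)$, $B(D)$, $\lvert \vec{N} \rvert$, and $y$, with acyclicity of the non-extremal pieces imported from the $\Z$-coefficient version of Proposition \ref{prop3}. The one place you add genuine content is the explicit contraction $h(\lambda) = (1, \lambda_1 - 1, \lambda_2, \dots, \lambda_m)$ verifying $dh + hd = -\mathrm{id}$ on the signed partition complex $C^{(N)}$ for $N \geq 2$ (the paper only notes the $\Z/2$ hypercube is acyclic); your sign bookkeeping there checks out, including the edge cases $\lambda_1 = 1$ and $m = 1$.
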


\begin{proof}The proof is identical to the proof of Proposition $\ref{prop4}$.\end{proof}


\bibliography{1flow}
\bibliographystyle{amsalpha}

\end{document}